\newcommand{\res}{\mathbin{\vrule height 1.6ex depth 0pt width
0.13ex\vrule height 0.13ex depth 0pt width 1.3ex}}
\newcommand{\Alt}{\operatorname{\raisebox{0.3ex}{\footnotesize $\bigwedge$}}\!}
\newcommand{\bb}[1]{\llbracket #1\rrbracket} 
\theoremstyle{theorem}
\newtheorem{theorem}{Theorem}[section]
\newtheorem{proposition}[theorem]{Proposition}
\newtheorem{corollary}[theorem]{Corollary}
\newtheorem{lemma}[theorem]{Lemma}
\newtheorem*{claim*}{Claim}
\newtheorem{claim}{Claim}
\theoremstyle{definition}
\newtheorem{definition}[theorem]{Definition}
\newtheorem{example}[theorem]{Example}
\theoremstyle{remark}
\newtheorem{remark}[theorem]{Remark}
\newcommand{\sF}{\mathcal{F}}
\newcommand{\sN}{\mathcal{N}}
\newcommand{\sH}{\mathcal{H}}
\newcommand{\sP}{\mathcal{P}}
\newcommand{\ssm}{\smallsetminus}
\newcommand{\R}{\mathbb{R}}
\newcommand{\N}{\mathbb{N}}
\newcommand{\CAT}{\operatorname{CAT}}
\newcommand{\can}{\operatorname{can}}
\newcommand{\Lip}{\operatorname{Lip}}
\newcommand{\Mloc}[1]{\mathbf{M}_{#1,\mathrm{loc}}}
\newcommand{\sRloc}[1]{\mathcal{R}_{#1,\mathrm{loc}}}
\newcommand{\sIloc}[1]{\mathcal{I}_{#1,\mathrm{loc}}}
\newcommand{\Nloc}[1]{\mathbf{N}_{#1,\mathrm{loc}}}
\newcommand{\Iloc}[1]{\mathbf{I}_{#1,\mathrm{loc}}}
\newcommand{\Zloc}[1]{\mathbf{Z}_{#1,\mathrm{loc}}}
\newcommand{\Mak}[1]{\mathbf{M}_{#1}}
\newcommand{\sRak}[1]{\mathcal{R}_{#1}}
\newcommand{\sIak}[1]{\mathcal{I}_{#1}}
\newcommand{\Nak}[1]{\mathbf{N}_{#1}}
\newcommand{\Iak}[1]{\mathbf{I}_{#1}}
\newcommand{\Zak}[1]{\mathbf{Z}_{#1}}
\newcommand{\Mass}{\mathbf{M}}
\newcommand{\spt}{\operatorname{spt}}
\newcommand{\set}{\operatorname{set}}
\newcommand{\FillVol}{\operatorname{FillVol}}
\newcommand{\diam}{\operatorname{diam}}
\title{On the asymptotic Plateau problem for cycles in the Tits boundary of a Hadamard space}
\author{Hjalti Isleifsson}
\date{\vspace{-48pt}}
\begin{document}

\allowdisplaybreaks

\maketitle

\makeatletter{\renewcommand*{\@makefnmark}{}
\footnotetext{{\it Date}: December 12, 2023.}
\footnotetext{Research supported by Swiss National Science Foundation Grant 197090.}
\makeatother}

\begin{abstract}
The dimension of cycles in the Tits boundary of a proper Hadamard space is bounded by the asymptotic rank \(m\) of the space minus one. Kleiner and Lang proved that for \((m-1)\)-dimensional cycles in the Tits boundary, the asymptotic Plateau problem can be solved. We prove that the asymptotic Plateau problem can be solved for so called {\it strongly immovable} cycles in the Tits boundary of a proper Hadamard space. The class of strongly immovable cycles contains all \((m-1)\)-cycles. Further, it is a result of Huang, Kleiner and Stadler that the class of strongly immovable cycles contains the boundaries of cocompact flats which do not bound flat half-spaces and we prove that the class of strongly immovable cycles of a fixed dimension forms a group so our result applies to new examples.
\end{abstract}

\section{Introduction}

In the context of Hadamard spaces (that is, complete \(\CAT(0)\) spaces) – or more generally, spaces satisfying some sort of a non-positive curvature condition – by an asymptotic Plateau problem is usually meant the study of existence of a minimal variety which is asymptotic in some sense to a given variety in a boundary at infinity. Two important boundaries of study are the {\it visual boundary} and the {\it Tits boundary}; these boundaries agree as sets – and we will refer to their underlying set as {\it the boundary at infinity} – but the visual boundary is endowed with the so called cone-topology while the Tits boundary is endowed with the Tits metric which induces a finer topology (cf. \cite{bh} for definitions).\\

Given two points in the boundary at infinity of a Hadamard space, the question of whether there exists a geodesic in the space which is forward asymptotic to one of the points and backward asymptotic to the other, is an example of an asymptotic Plateau problem. For higher dimensional varieties in the visual boundary, the asymptotic Plateau problem was first studied by Anderson in \cite{anderson_invent} for hyperbolic manifolds, where he showed that for any closed embedded manifold in the visual boundary, there exists an area minimizing local integral current which is asymptotic to the given manifold in the boundary. Extensions of Anderson's result for Hadamard manifolds of pinched negative curvature were obtained in \cite{gromov_foliated} and \cite{bangert_lang} and then for Gromov hyperbolic manifolds with bounded geometry in \cite{lang_cvpde}.\\

In the abovementioned results, the underlying space never contains a flat. In a space that contains a flat, the situation becomes more subtle and by considering Euclidean space, it is clear that the results above do not hold in such spaces; in this regard it is interesting to take a look at \cite[Corollary 5.2]{kloeckner_mazzeo} where Kloeckner and Mazzeo give a complete classification of the curves in the visual boundary of \(\mathbb{H}^2\times \R\) for which the asymptotic Plateau problem can be solved.\\

In this paper, we build upon work of Kleiner and Lang in \cite{kl_hrh} and consider the asymptotic Plateau problem for cycles in the Tits boundary of a proper Hadamard space. By a {\it cycle} is meant an integral metric current without boundary; given a complete metric space \(X\), the group of \(k\)-dimensional integral metric currents with finite mass in \(X\) is denoted by \(\Iak{k}(X)\) and in case \(X\) is locally compact, the group of \(k\)-dimensional integral metric currents in \(X\) with locally finite mass is denoted by \(\Iloc{k}(X)\). The respective subgroups of cycles, i.e. currents without boundary, are denoted by \(\Zak{k}(X)\) and \(\Zloc{k}(X)\). In Subsection \ref{current_sect}, we give a summary of what we need from the theory of currents. Before we state our main result, let us recall the following notions from \cite{kl_hrh}: Let \(S \in \Zloc{k}(X)\) be a local cycle. The {\it asymptotic density} of \(S\) is defined by
\[
\Theta_\infty(S) \coloneqq \limsup_{r\rightarrow \infty} \frac{1}{r^k}\|S\|(B(p,r))
\]
where \(p\in X\) is some point. For \(p\in X\) and \(r > 0\),
\[
F_{p,r}(Z)\coloneqq \frac{1}{r^{k+1}}\cdot \inf \{\Mass(V) \mid V\in \Iak{k+1}(X),\, \spt(Z - \partial V) \cap B(p,r) = \emptyset\}
\]
and the {\it asymptotic filling density} of \(S\) is defined by
\[
F_\infty(S) \coloneqq \limsup_{r\rightarrow \infty} F_{p,r}(S).
\]
Clearly, \(\Theta_\infty(S)\) and \(F_\infty(S)\) do not depend on the choice of \(p \in X\). If \(S' \in \Zloc{k}(X)\) is another local cycle and \(F_\infty(S-S') = 0\) then one says that \(S\) and \(S'\) are {\it \(F\)-asymptotic}.\\

Our main result, which is contained in Section \ref{solving}, can now be stated as follows:

\begin{theorem}\label{main_thm}
Let \(X\) be a proper Hadamard space, \(k\geq 1\) an integer and \(\bar{Z}\in \Zak{k-1}(\partial_TX)\) a strongly immovable cycle. For each \(p\in X\) we let \(R_p \in \Zloc{k}(X)\) denote the cone from \(p\) over \(\bar{Z}\). The following holds:
\begin{enumerate}[label=\emph{(\roman*)}]
    \item There exists an absolutely area minimizing local cycle \(S\in \Zloc{k}(\partial_TX)\) which satisfies that \(F_\infty(S-R_p) = 0\) for every \(p\in X\).
    \item For every \(p\in X\) and every minimizer \(S\in \Zloc{k}(X)\) satisfying \(F_\infty(S-R_p) = 0\) it holds that \((\varrho_{p,\lambda})_\#S \rightarrow R_p\) in the local flat topology as \(\lambda \searrow 0\). Here \(\varrho_{p,\lambda}\) denotes the \(\lambda\)-dilation of \(X\) with respect to \(p\), \(0 \leq \lambda \leq 1\).
    \item For every \(p\in X\) and every \(\varepsilon > 0\) there is an \(\bar{r} > 0\) such that if \(S\in \Zloc{k}(X)\) is a minimizer satisfying \(F_\infty(S-R_p) = 0\) and \(x \in \spt(S)\) with \(d(p,x) \geq \bar{r}\) then \(d(x,\spt(R_p)) < \varepsilon \cdot d(p,x)\).
    \item If \(S\in \Zloc{k}(X)\) is a minimizer with \(F_\infty(S-R_p) = 0\) then \(\partial_T\spt(S) = \spt(\bar{Z})\) and \(\Theta_\infty(S) = \frac{1}{k}\Mass(\bar{Z})\).
\end{enumerate}
\end{theorem}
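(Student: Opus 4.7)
The plan is to follow the Kleiner--Lang blueprint for top-dimensional cycles, with the strongly immovable hypothesis on $\bar Z$ taking the place of the automatic stability available when $k-1=m-1$. All four assertions should emerge from a single existence construction together with a blow-down analysis driven by the cone structure of $R_p$.

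For (i), fix $p\in X$ and for each $r>0$ let $S_r\in\Zloc{k}(X)$ minimize mass in $B(p,r)$ among cycles with $\spt(S_r-R_p)$ compactly contained in $B(p,r)$. Existence follows from Ambrosio--Kirchheim compactness for integral currents, using the competitor $R_p$ itself to obtain the uniform bound $\|S_r\|(B(p,r))\le\|R_p\|(B(p,r))\le \tfrac{1}{k}r^k\Mass(\bar Z)$. A diagonal extraction along $r_n\to\infty$ produces a limit $S\in\Zloc{k}(X)$ in the local flat topology which is absolutely area minimizing by lower semicontinuity of mass and the local nature of the competitors. The approximants satisfy $F_\infty(S_r-R_p)=0$ essentially by construction; the step that must use strong immovability is the passage to the limit, ruling out the possibility that the minimizing sequence drifts toward a cone over a different, lower-mass cycle in $\partial_T X$.

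For (ii) I would exploit the cone invariance $(\varrho_{p,\lambda})_\#R_p=R_p$: the local flat distance between $(\varrho_{p,\lambda})_\#S$ and $R_p$ inside $B(p,r)$ pulls back under $\varrho_{p,\lambda}$ to a filling question for $S-R_p$ inside $B(p,r/\lambda)$, and the assumption $F_\infty(S-R_p)=0$ furnishes fillings of mass $o((r/\lambda)^{k+1})$ whose pushforwards have mass $o(r^{k+1})$, giving local flat convergence as $\lambda\searrow 0$. Part (iii) is then a contradiction-and-compactness argument: a sequence $x_n\in\spt(S)$ with $d(p,x_n)\to\infty$ and $d(x_n,\spt(R_p))\ge\varepsilon\,d(p,x_n)$, rescaled by $\lambda_n=1/d(p,x_n)$, produces points $\varrho_{p,\lambda_n}(x_n)$ on the unit sphere around $p$ at distance at least $\varepsilon$ from $\spt(R_p)$ yet lying in the supports of the minimizers $(\varrho_{p,\lambda_n})_\#S$; by (ii) together with Hausdorff closedness of support under local flat convergence of minimizers, these supports converge to $\spt(R_p)$, contradicting the distance bound.

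For (iv), the inclusion $\partial_T\spt(S)\subseteq\spt(\bar Z)$ is a direct consequence of (iii), the reverse inclusion follows because each ray in $\spt(R_p)$ arises as an asymptotic limit of $\spt(S)$ by $F$-asymptoticity, and the density identity $\Theta_\infty(S)=\tfrac{1}{k}\Mass(\bar Z)$ comes from monotonicity of the density ratio for minimizers in Hadamard spaces combined with the explicit value $\Theta_\infty(R_p)=\tfrac{1}{k}\Mass(\bar Z)$ transferred through the $F$-asymptotic relation. The main obstacle I anticipate is (i), specifically the verification that the exhaustion minimizer is $F$-asymptotic to $R_p$ and not to a cone over some moved replacement of $\bar Z$; for $(m-1)$-cycles the dimension obstruction forces this automatically, whereas in general I expect strong immovability to be formalized as a quantitative rigidity statement for $\bar Z$ among $F$-asymptotic cycles in $\partial_T X$, providing precisely the coercivity needed to pin down the limit.
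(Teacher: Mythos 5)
Your high-level outline (exhaustion by minimal fillings, blow-down analysis, density comparison) does match the Kleiner--Lang template the paper follows, but the proposal leaves open precisely the steps that are the actual content of the paper, and one of the steps you do spell out fails.

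The gap in (i) is the serious one. You say the approximants $S_r$ satisfy $F_\infty(S_r - R_p) = 0$ ``essentially by construction'' and that strong immovability ``rules out drift''; but for a fixed $r$ the quantity $F_\infty(S_r - R_p)$ vanishes trivially because $S_r = R_p$ outside a ball, and this information does not pass to the flat limit $S$: nothing in the construction so far controls how much filling mass one needs inside $B(p,r)$ to connect $S$ to $R_p$. The paper's whole point is that this control is nontrivial and must be obtained by an induction-on-scales argument (Proposition \ref{partial_filling}), whose base step is a relative isoperimetric-type inequality (Proposition \ref{reliso}) proved by passing to an asymptotic cone: one shows that any blow-down limit $\bar S$ of the minimizing sequence, living in $X_\omega = \lim_\omega(X,\tfrac{1}{r_j}d,p)$, together with a boundary piece $\bar T$ supported on the unit sphere, forms a filling of $\iota_\#\bar Z$, and then strong immovability (via $\Mass(\bar T - \iota_\#\bar R) + \Mass(\bar R) = \Mass(\bar T)$ and the density comparison $\|\bar S\|(B(p_\omega,r)) \le \|\bar R\|(B(p_\omega,r))$) forces $\bar S = \bar R$. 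This is where strong immovability enters; your proposal replaces it with a heuristic.

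Your argument for (iii) contains a concrete error. Writing $\lambda_n = 1/d(p,x_n)$, you claim the points $\varrho_{p,\lambda_n}(x_n)$ sit at distance $\geq \varepsilon$ from $\spt(R_p)$. But $\varrho_{p,\lambda}$ is $\lambda$-Lipschitz and maps $\spt(R_p)$ into itself, so one only gets $d(\varrho_{p,\lambda}(x),\spt(R_p)) \leq \lambda\, d(x,\spt(R_p))$: the distance can contract strictly faster than linearly (this already happens in $\mathbb{H}^2\times\R$), so the rescaled points may end up arbitrarily close to $\spt(R_p)$ and no contradiction arises. The correct setting, used implicitly by the paper via \cite[Theorem 8.2(2)]{kl_hrh}, is the asymptotic cone $\lim_\omega(X,\tfrac{1}{r_n}d,p)$, where the normalized distance $\tfrac{1}{r_n}d(x_n,\cdot)$ is what survives the ultralimit, and there the lower bound $\geq\varepsilon$ is preserved. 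Relatedly, the statement (iii) asserts a uniform $\bar r$ over all minimizers $S$ with $F_\infty(S-R_p)=0$; a contradiction argument along a single $S$ only produces $\bar r$ depending on $S$, so the extraction has to allow a different $S_n$ for each $n$, which again points toward a compactness argument in an ultralimit rather than a direct rescaling of one fixed current.

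One thing in your favor: your direct argument for (ii) (pull back the $F$-asymptotic filling under $\varrho_{p,\lambda}$ using $(\varrho_{p,\lambda})_\# R_p = R_p$, then push forward and use $\lambda$-Lipschitzness to beat $o((r/\lambda)^{k+1})$ down to $o(r^{k+1})$) does give $(\varrho_{p,\lambda})_\# S \to R_p$ in the local flat topology, and it is more elementary than what the paper does. The paper instead proves the stronger Theorem \ref{uniquetangent} that $(\varrho_{q,\lambda})_\# S \to R_q$ for \emph{every} basepoint $q$, which needs the full machinery (subsequential limits are conical, lift to $C_TX$, and coincide with $\bar R$ by strong immovability in the asymptotic cone), but your soft argument is enough for the statement (ii) as written. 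For (iv), your deduction of the boundary identity from (ii) and (iii) and of the density identity from monotonicity plus Lemma \ref{density_lemma} is on the right track.
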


Strongly immovable cycles are defined in Section \ref{imm_sect}; informally speaking, the concept is supposed to capture the idea that the cycle is isolated when viewed as an element of any asymptotic cone with a fixed base point. The notion is a stricter version of an immovability condition introduced by Huang in \cite{huang} and it is also inspired by the works of Huang, Kleiner and Stadler in \cite{hks_morse1,hks_morse2}. It is a classical result \cite[Proposition 9.21(1)]{bh} that given two points in the boundary at infinity of a Hadamard space, one can find a geodesic which is forward asymptotic to one of the points and backward asymptotic to the other, if one the points is isolated in the Tits boundary. Hence it is natural to try to solve the asymptotic Plateau problem for cycles in the Tits boundary which satisfy some isolation condition.\\

Let us now compare our result with the ones of Kleiner and Lang from \cite{kl_hrh}. In \cite{wenger_asymptotic}, Wenger introduced the notion of {\it asymptotic rank} for general metric spaces. It follows from work of Kleiner in \cite{kleiner_local} that for proper and cocompact Hadamard spaces, the asymptotic rank and the Euclidean rank agree, cf. \cite[Theorem 3.4]{wenger_asymptotic}. Now, let \(X\) be a proper Hadamard space of asymptotic rank \(m\geq 1\) (actually, Kleiner and Lang proved their results more generally for spaces with a convex geodesic bicombing). Kleiner and Lang showed that for a cycle \(\bar{Z}\in \Zak{m-1}(\partial_TX)\), the conclusions in Theorem \ref{main_thm} hold; cf. \cite[Theorem 5.6]{kl_hrh} for (i), \cite[Theorem 7.3]{kl_hrh} for (ii), \cite[Theorem 8.1(2)]{kl_hrh} for (iii), \cite[Proposition 8.2]{kl_hrh} for (iv) and the comment following \cite[Theorem 9.5]{kl_hrh} for (v). In fact, they show that for every cycle with mass less than or equal to some given number \(M\), the same \(\bar{r}\) in (iii) can be chosen, given \(\varepsilon > 0\) and \(p \in X\).

\subsection*{Acknowledgments} 

I want to thank Urs Lang for introducing the asymptotic Plateau problem to me, for many useful discussions, for help with formulating some parts of the paper and for reading the manuscript carefully, spotting some mistakes and providing valuable feedback. Stephan Stadler is thanked for useful discussions about the asymptotic Plateau problem, feedback on this work and for pointing out a mistake in a draft of this paper. Sigurður Jens Albertsson is thanked for reading through the manuscript, catching several typos and grammatical errors and providing suggestions on wording. I want to express my gratitude for financial support from the Swiss National Science Foundation, grant 197090.

\section{Preliminaries}

In this section we will fix notation and recall some definitions and results that we will need.

\subsection{Basics}

Given a metric space \((X,d)\), a point \(p\in X\) and \(r\geq 0\) we let \(U(p,r)\), \(B(p,r)\) and \(S(p,r)\) denote the open ball, the closed ball and the sphere, respectively, of radius \(r\) centered at \(p\). If it is not clear what the underlying space is, we will sometimes add \(X\) as subscript and write \(U_X(p,r)\), \(B_X(p,r)\) and \(S_X(p,r)\). Recall that a metric space is said to be {\it proper} if closed balls are compact. Given a point \(p\in X\), we let \(d_p(x)\coloneqq d(p,x)\).\\

Let \((X,d_X),\, (Y,d_Y)\) be metric spaces and \(f: X\rightarrow Y\) a map. The map \(f\) is said to be {\it \(L\)-Lipschitz} if \(L \geq 0\) and
\[
d_Y(f(x),f(y)) \leq L \cdot d_X(x,y)
\]
for all \(x,y\in X\). It is said to be {\it \(L\)-bi-Lipschitz} if \(L > 0\) and
\[
\frac{1}{L}\cdot d_X(x,y)\leq d_Y(f(x),f(y)) \leq L\cdot d_X(x,y)
\]
for all \(x,y \in X\). It is said that \(f\) is {\it Lipschitz (bi-Lipschitz)} if it is \(L\)-Lipschitz (\(L\)-bi-Lipschitz) for some \(L\). If \(f\) is Lipschitz, then we let \(\Lip(f)\) denote the smallest number \(L\) such that \(f\) is \(L\)-Lipschitz.\\

A {\it Hadamard space} is a complete geodesic metric space \((X,d)\) which satisfies the following comparison principle: Let \(o,x,y \in X\), \(\alpha,\beta: [0,1]\rightarrow X\) be geodesics such that \(\alpha(0) = \beta(0) = o\), \(\alpha(1) = x\), \(\beta(1) = y\). Let \(\bar{x},\bar{y}\in \R^2\) be such that \(\|\bar{x}\| = d(o,x)\), \(\|\bar{y}\| = d(o,y)\) and \(\|\bar{y}-\bar{x}\| = d(x,y)\). Then
\[
d(\alpha(s),\beta(t)) \leq \|t\bar{y}-s\bar{x}\|
\]
for every \(s,t\in[0,1]\). When \(X\) is a Hadamard space, we let \(\sigma: [0,1]\times X \times X \rightarrow X\) denote the geodesic bicombing, i.e. the map such that for every \(x,y\in X\), \(t\mapsto \sigma(t,x,y)\) is the unique geodesic with \(\sigma(0,x,y) = x\) and \(\sigma(1,x,y) = y\). For \(p\in X\) and \(\lambda \in [0,1]\) we let \(\varrho_{p,\lambda}(x)\coloneqq \sigma(\lambda,p,x)\) denote the {\it \(\lambda\)-dilation with respect to \(p\)}. The map \(\varrho_{p,\lambda}\) is \(\lambda\)-Lipschitz.

\subsection{Currents in metric spaces}\label{current_sect}

In \cite{ambrosio_kirchheim}, Ambrosio and Kirchheim introduced the notion of currents with finite mass for arbitrary complete metric spaces. Later, in \cite{lang_local}, Lang showed that in the case of locally compact metric spaces, the finite mass condition from Ambrosio and Kirchheim can be dispensed with, and hence that for these spaces there is a theory of local currents, similar to the classical theory of currents in \(\R^n\) developed by Federer and Fleming in \cite{federer_fleming}. We will now give a brief summary of what we need from the theory of currents in metric spaces. References are \cite{ambrosio_kirchheim} and \cite{lang_local}. We also used \cite[Section 2.2]{wenger_euclidean} and \cite[Section 2]{kl_hrh}.

\subsubsection{Definition of a current}

From now on, \(X\) will denote a complete metric space. In case we are talking about local currents in \(X\) then we will further assume that \(X\) is locally compact.\\

Let \(k\geq 0\) be an integer. A \(k\)-dimensional {\it metric current} with (locally) finite mass is an \(\R\) valued \((k+1)\)-linear function \(T\) defined on \((k+1)\)-tuples \((f,\pi_1,\ldots,\pi_k)\) of (locally) Lipschitz functions on \(X\) with \(f\) bounded (compactly supported) which satisfies the following conditions:
\begin{enumerate}[label=(\roman*)]
    \item \(T(f,\pi_{1,j},\ldots,\pi_{k,j}) \rightarrow T(f,\pi_1,\ldots,\pi_k)\) as \(j\rightarrow \infty\) whenever \(\pi_{i,j}\rightarrow \pi_i\) pointwise as \(j\rightarrow \infty\) and \(\sup_{j\in \N}\Lip(\pi_{i,j}) < \infty\) (\(\sup_{j\in \N} \Lip(\pi_{i,j}|_K) < \infty\) for every compact set \(K\subseteq X\)) for every \(i\in \{1,\ldots,k\}\).
    \item \(T(f,\pi_1,\ldots,\pi_k) = 0\) if there exists \(i \in \{1,\ldots,k\}\) such that \(\pi_i\) is constant on a neighborhood of \(\spt(f)\).
    \item There exists a (locally) finite Borel measure \(\mu\) on \(X\) such that 
    \[
    |T(f,\pi_1,\ldots,\pi_k)| \leq \prod_{i=1}^k \Lip(\pi_i) \cdot \int_X |f|\: d\mu
    \]
    for every \((f,\pi_1,\ldots,\pi_k)\).
\end{enumerate}
The smallest measure which satisfies property (iii) is called the {\it mass measure} of \(T\) and denoted by \(\|T\|\). The {\it total mass} of \(T\) is \(\Mass(T)\coloneqq \|T\|(X)\). The space of \(k\)-dimensional metric currents in \(X\) with (locally) finite mass is denoted by \(\Mak{k}(X)\) (\(\Mloc{k}(X)\)).

\subsubsection{Restrictions, support and push-forward}

A \(k\)-dimensional current \(T\) in \(X\) with (locally) finite mass extends uniquely to the space of \((k+1)\)-tuples \((f,\pi_1,\ldots,\pi_k)\) with \(f\) a bounded (and compactly supported) Borel function and \(\pi_1,\ldots,\pi_k\) (locally) Lipschitz functions. This allows one to define the {\it restriction} of \(T\) to a Borel set \(B\subseteq X\) by 
\[
(T\res B)(f,\pi_1,\ldots,\pi_k) \coloneqq T(f\cdot \mathbf{1}_B, \pi_1,\ldots,\pi_k)
\]
where \(\mathbf{1}_B\) denotes the characteristic function of \(B\). The {\it support} of \(T\) is defined as 
\[
\spt(T) \coloneqq \spt(\|T\|) = \{x \in X \mid \|T\|(B(x,\varepsilon)) > 0 \text{ for every } \varepsilon > 0\}.
\]
Let \(\varphi: X\rightarrow Y\) be a (proper (i.e. the preimage of every compact set is compact) locally) Lipschitz map and \(T\) a current in \(X\). Then the {\it push forward} \(\varphi_\# T\in \Mak{k}(Y)\) (\(\varphi_\#T\in \Mloc{k}(Y)\)) of \(T\) under \(\varphi\) is defined by
\[
(\varphi_\#T)(f,\pi_1,\ldots,\pi_k) = T(f\circ \varphi, \pi_1\circ \varphi,\ldots,\pi_k\circ \varphi).
\]
It holds that
\[
\Mass(\varphi_\# T) \leq \Lip(\varphi)^k\cdot \Mass(T).
\]

\subsubsection{The boundary operator, normal currents, rectifiable currents, integer rectifiable currents and integral currents}

If \(k\geq 1\), then the {\it boundary} of \(T\) is the function \(\partial T\) defined on \(k\)-tuples \((f,\pi_1,\ldots,\pi_{k-1})\) of (locally) Lipschitz functions with \(f\) bounded (compactly supported) via 
\[
(\partial T)(f,\pi_1,\ldots,\pi_{k-1}) \coloneqq T(\mathbf{1}_{\spt(T)\cap \spt(f)},f,\pi_1,\ldots,\pi_{k-1}).
\]
If \(\partial T\) is a current with (locally) finite mass or \(k=0\) then \(T\) is called a ({\it locally}) {\it normal current}; the space of those currents is denoted by \(\Nak{k}(X)\) (\(\Nloc{k}(X)\)).

\begin{example}\cite[p. 3]{ambrosio_kirchheim}\cite[Proposition 2.6]{lang_local}
Let \(X = \R^k\) and \(\theta: \R^k \rightarrow \R\) be a locally integrable function. Then the map \(\bb{\theta}\) which is defined on tuples \((f,\pi_1,\ldots,\pi_k)\) of compactly supported Lipschitz functions as follows
\[
\bb{\theta}(f,\pi_1,\ldots,\pi_k) \coloneqq \int_{\R^n}\theta(x) f(x) \det\left(\frac{\partial \pi_i}{\partial x_j}(x)\right)\: dx
\]
is a \(k\)-dimensional current in \(\R^k\). Its mass measure is given by
\[
\|T\|(A) = \int_A |\theta(x)| \: dx.
\]
\end{example}

The \(k\)-dimensional metric currents \(T\) with (locally) finite mass for which there exists a sequence \(K_j\subseteq \R^k\) of compact sets together with integrable functions \(\theta_j: K_j \rightarrow \R\) and bi-Lipschitz maps \(\varphi_j: K_j \rightarrow X\) such that 
\[
T = \sum_{j=1}^\infty \varphi_{j\#}\bb{\theta_j} \quad \text{and} \quad \|T\| = \sum_{j=1}^\infty \|\varphi_{j\#}\bb{\theta_j}\|
\]
are said to be ({\it locally}) {\it rectifiable} and the space of (locally) rectifiable metric currents in \(X\) is denoted by \(\sRak{k}(X)\) (\(\sRloc{k}(X)\)). Note that if \(k = 0\), this means that there exist sequences \(x_j\in X\) and \(\theta_j \in \R\) such that \(T(f) = \sum_{j=1}^\infty \theta_j\cdot f(x_j)\) for every bounded Lipschitz function \(f\). If the numbers \(\theta_j\) can be taken to be integer valued, then \(T\) is called (a {\it locally}) an {\it integer rectifiable current} and the corresponding space is denoted by \(\sIak{k}(X)\) (\(\sIloc{k}(X)\)).\\

By the so called {\it boundary rectifiability theorem} it holds that if \(T\) is integer rectifiable and \(\partial T\) has (locally) finite mass, then \(\partial T\) is also integer rectifiable. (A locally) An integer rectifiable current which is normal is called (a {\it locally}) an {\it integral current} and the corresponding space is denoted by \(\Iak{k}(X)\) (\(\Iloc{k}(X)\)). It is a fact that \(\partial \circ \partial = 0\) so the spaces \(\Iak{k}(X)\) (\(\Iloc{k}(X)\)), \(k\geq 0\), together with the maps \(\partial\) form a chain complex. For \(k\geq 1\) we let \(\Zak{k}(X)\) (\(\Zloc{k}(X)\)) denote the elements of \(\Iak{k}(X)\) (\(\Iloc{k}(X)\)) with zero boundary. For \(k=0\), \(\Zak{k}(X)\) (\(\Zloc{k}(X)\)) denotes the set of currents \(T \in \Iak{k}(X)\) (\(T \in \Zloc{k}(X)\)) which are given by \(T(f) = \sum_{j=1}^n \theta_j \cdot f(x_j)\) where \(n\geq 1\) is an integer, \(x_j\) points in \(X\) and \(\theta_j\) integers such that \(\sum_{j=1}^n\theta_j = 0\). The elements of \(\Zak{k}(X)\) (\(\Zloc{k}(X)\)) are referred to as ({\it local}) {\it integral cycles.}

\subsubsection{Intrinsic representation of rectifiable currents}

Let \(k\geq 0\) be an integer and \(T \in \Mak{k}(X)\). The {\it set} of \(T\) is defined as follows:
\[
\set(T) \coloneqq \left\{x \in X \;\Big|\; \liminf_{r \searrow 0} r^{-k}\cdot\|T\|(B(x,r)) > 0\right\}.
\]
It is shown in \cite{ambrosio_kirchheim} that if \(T\) is rectifiable then \(\sH^k(\spt(T)\ssm \set(T)) = 0\).\\

We will need the following theorem from \cite{ambrosio_kirchheim} which is stated there for so called \(w^*\)-separable dual spaces, a class of spaces which in particular contains \(\ell^\infty(\N)\). Under the assumption – which we make – that the cardinality of any set is an Ulam number, a condition which is consistent with standard ZFC set theory, it holds that any finite Borel measure is concentrated on its support. As the support of a finite measure is separable, it can be embedded isometrically into \(\ell^\infty(\N)\) and hence the following formulation of the theorem is justified (cf. discussion following Definition 2.8 in \cite{ambrosio_kirchheim}, discussion on page 4 and Lemma 2.9 in \cite{ambrosio_kirchheim}).

\begin{theorem}\label{intr_rep}\emph{\cite[Theorem 9.1]{ambrosio_kirchheim}}
Let \(k\geq 0\) be an integer and \(T\in \sRak{k}(X)\). Then there exists a Borel function \(\theta: \set(T)\rightarrow (0,\infty)\) with \(\int_{\set(T)}\theta \: d\sH^k < \infty\) and a measurable orientation \(\tau\) of \(\set(T)\) such that
\[
T(f,\pi_1,\ldots,\pi_k) = \int_{\set(T)} f(x)\theta(x) \langle \Alt_k D\pi(x)|_{T_x\set(T)},\tau(x)\rangle \: d\sH^k(x)
\]
for every \(f,\pi_1,\ldots,\pi_k\) Lipschitz, with \(f\) bounded. Here \(T_x\set(T)\) denotes the approximate tangent space to \(\set(T)\) at \(x\) and
\[
\Alt_k D\pi(x)|_{T_x\set(T)} = D\pi_1(x)|_{T_x\set(T)} \wedge \cdots \wedge D\pi_k(x)|_{T_x\set(T)}
\]
where \(Dg(x)|_{T_x\set(T)}\) denotes the tangential derivative of \(g\) at \(x\) when \(g\) is Lipschitz function on \(\set(T)\) (cf. discussion in section 9.1 in \cite{ambrosio_kirchheim}, preceding Theorem 9.1).
\end{theorem}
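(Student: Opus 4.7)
The plan is to recover a canonical integral expression for $T$ on its set $\set(T)$ by processing the defining decomposition $T = \sum_j \varphi_{j\#}\bb{\theta_j}$ in three stages: (1) identify $\set(T)$ and extract the scalar density $\theta$, (2) construct the measurable orientation $\tau$ piecewise from the bi-Lipschitz charts and reconcile it on overlaps, and (3) verify the integral formula chart-by-chart via the chain rule and the classical area formula, then assemble the pieces.

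First I would establish that $E := \bigcup_{j=1}^\infty \varphi_j(K_j)$ is countably $\sH^k$-rectifiable and coincides with $\set(T)$ up to an $\sH^k$-null set. For each $j$, the area formula applied to the bi-Lipschitz $\varphi_j$ yields $\|\varphi_{j\#}\bb{\theta_j}\| \ll \sH^k\res \varphi_j(K_j)$; combined with $\|T\| = \sum_j \|\varphi_{j\#}\bb{\theta_j}\|$ this gives $\|T\| \ll \sH^k\res E$. The density $\theta(x) := d\|T\|/d(\sH^k\res \set(T))(x)$ is then a Borel function with $\int_{\set(T)} \theta\,d\sH^k = \Mass(T) < \infty$, and $\theta > 0$ on $\set(T)$ by the definition of $\set(T)$.

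Next I would define the orientation. Disjointify by setting $E_j := \varphi_j(K_j) \setminus \bigcup_{i<j}\varphi_i(K_i)$, so $\set(T) = \bigsqcup_j E_j$ up to $\sH^k$-null sets. At $\sH^k$-a.e.\ $x \in E_j$ the approximate tangent $T_x\set(T)$ coincides with $\mathrm{image}\,D\varphi_j(\varphi_j^{-1}(x))$, and I set
\[
\tau(x) := \operatorname{sgn}\!\bigl(\theta_j(\varphi_j^{-1}(x))\bigr)\cdot\frac{D\varphi_j(\varphi_j^{-1}(x))(e_1\wedge\cdots\wedge e_k)}{\|D\varphi_j(\varphi_j^{-1}(x))(e_1\wedge\cdots\wedge e_k)\|}.
\]
Then I test against $(f,\pi_1,\ldots,\pi_k)$ on a single chart. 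Unwinding the definition of push-forward and of $\bb{\theta_j}$ gives
\[
\varphi_{j\#}\bb{\theta_j}(f,\pi_1,\ldots,\pi_k) = \int_{K_j} \theta_j(y)(f\circ\varphi_j)(y)\det\!\left(\tfrac{\partial(\pi_i\circ\varphi_j)}{\partial y_l}(y)\right)dy.
\]
The chain rule rewrites the determinant as $\langle \Alt_k D\pi(\varphi_j(y))|_{T_{\varphi_j(y)}\set(T)},\, D\varphi_j(y)(e_1\wedge\cdots\wedge e_k)\rangle$, and the area formula transports the Lebesgue integral over $K_j$ to an $\sH^k$-integral over $\varphi_j(K_j)$. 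Restricting to the disjoint piece $E_j$, the Jacobian $J_k(\varphi_j)$ and the sign of $\theta_j$ absorb into $\theta$ and $\tau$ as constructed; summing over $j$ yields the claimed global representation, and the orientation turns out to be independent of the disjointification.

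The main obstacle is precisely this last independence claim, i.e.\ showing that at $\sH^k$-a.e.\ $x$ lying in an overlap $\varphi_i(K_i)\cap \varphi_j(K_j)$ with both $\theta_i\circ\varphi_i^{-1}$ and $\theta_j\circ\varphi_j^{-1}$ nonzero, the signed unit $k$-vectors induced by $\varphi_i$ and $\varphi_j$ agree. The unoriented tangent planes agree by standard rectifiability theory, so only the sign is at stake; but a disagreement on a set of positive $\sH^k$-measure would, via a blow-up and tangent-measure analysis at typical points, produce strict cancellation in the sum $\sum_j \varphi_{j\#}\bb{\theta_j}$ at the level of $T$, forcing $\|T\| < \sum_j \|\varphi_{j\#}\bb{\theta_j}\|$ and contradicting the hypothesis $\|T\| = \sum_j \|\varphi_{j\#}\bb{\theta_j}\|$. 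This is the step where the (otherwise redundant-looking) strict mass additivity in the definition of rectifiability is essential.
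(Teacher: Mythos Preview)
The paper does not prove this theorem; it is quoted verbatim from \cite[Theorem~9.1]{ambrosio_kirchheim} and used as a black box (notably in the proof of Lemma~\ref{cancellation}). There is therefore no in-paper argument to compare against.

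Your outline is essentially the Ambrosio--Kirchheim argument: parametrize by the bi-Lipschitz charts from the definition of $\sRak{k}(X)$, push the Euclidean formula for $\bb{\theta_j}$ through via the chain rule and the area formula, and use the mass-additivity clause $\|T\| = \sum_j \|\varphi_{j\#}\bb{\theta_j}\|$ to rule out orientation conflicts on overlaps. That last observation is exactly right and is indeed the reason the additivity clause is part of the definition.

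One point to tighten: your proposed $\theta(x) := d\|T\|/d(\sH^k\res\set(T))(x)$ is not the $\theta$ of the statement. By the companion result quoted immediately afterwards in the paper (\cite[Theorem~9.5]{ambrosio_kirchheim}), one has $\|T\| = \theta\,\lambda\,\sH^k\res\set(T)$ where $\lambda$ is the area factor of the approximate tangent plane, so your Radon--Nikodym derivative recovers $\theta\lambda$, not $\theta$. In a Hilbert ambient $\lambda\equiv 1$ and the distinction disappears, but in the general Banach setting of \cite{ambrosio_kirchheim} (to which the paper explicitly reduces via the $\ell^\infty(\N)$ embedding) the two differ. The clean fix is to define $\theta$ directly from the chart data, as $|\theta_j\circ\varphi_j^{-1}|$ on the disjoint pieces $E_j$, and only afterwards identify $\|T\|$; your Step~3 already produces this $\theta$ implicitly, so the issue is just that the Radon--Nikodym definition in Step~1 is the wrong shortcut.
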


We also need the following theorem.

\begin{theorem}\emph{\cite[Theorem 9.5]{ambrosio_kirchheim}}
Let \(k\geq 0\) be an integer and \(T\in \sRak{k}(X)\). Then \(\|T\| = \theta \lambda \sH^k\res \set(T)\) i.e.
\[
\|T\|(B) = \int_{B\cap \set(T)} \theta(x)\lambda(x)\: d\sH^k
\]
for every Borel set \(B\), where \(\theta\) is the function from Theorem \ref{intr_rep} and \(\lambda(x)\coloneqq \lambda_T(x)\) is the area factor of \(T_x\set(T)\) (cf. discussion around (9.11) in \cite{ambrosio_kirchheim}).
\end{theorem}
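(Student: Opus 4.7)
The strategy is to establish both inequalities $\|T\|\leq\theta\lambda\sH^k\res\set(T)$ and $\|T\|\geq\theta\lambda\sH^k\res\set(T)$, starting from the intrinsic representation in Theorem \ref{intr_rep}. The fundamental issue is that $\set(T)$ sits inside a general metric space $X$, so its approximate tangent spaces carry a non-Euclidean norm inherited from $X$, and the area factor $\lambda(x)$ will appear precisely as the discrepancy between this inherited norm (which governs the mass pairing) and the Euclidean structure implicit in the $\sH^k$-normalization of the orientation $\tau$.

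\textbf{Upper bound.} Taking absolute values inside the integral in Theorem \ref{intr_rep} gives
\[
|T(f,\pi_1,\ldots,\pi_k)| \leq \int_{\set(T)}|f(x)|\theta(x)\,\bigl|\langle\Alt_k D\pi(x)|_{T_x\set(T)},\tau(x)\rangle\bigr|\,d\sH^k(x).
\]
The pairing at $x$ is bounded by the comass of the simple $k$-covector $\Alt_k D\pi(x)|_{T_x\set(T)}$ measured against the norm on $T_x\set(T)$ inherited from $X$, and on a $k$-dimensional normed space this comass is bounded by $\lambda(x)\prod_i\Lip(\pi_i|_{T_x\set(T)})\leq \lambda(x)\prod_i\Lip(\pi_i)$; this is the defining property of the area factor in finite-dimensional normed geometry. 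Consequently $\theta\lambda\sH^k\res\set(T)$ satisfies condition (iii) in the definition of mass, which yields the upper bound.

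\textbf{Lower bound and main obstacle.} For the reverse inequality I would use the rectifiable decomposition $T=\sum_j\varphi_{j\#}\bb{\theta_j}$ to localize and argue by blow-up at $\sH^k$-almost every $x_0\in\set(T)$. At such a point $\varphi_j$ admits a metric differential whose associated seminorm is the inherited norm on $T_{x_0}\set(T)$; one then selects test functions $f$ peaked on balls $B(x_0,r)$ and Lipschitz functions $\pi_1,\ldots,\pi_k$ on $X$ whose tangential derivatives at $x_0$ approximately realize the comass bound used in the upper estimate, and evaluates the ratio $T(f\mathbf{1}_{B(x_0,r)},\pi_1,\ldots,\pi_k)/\sH^k(B(x_0,r))$ via the intrinsic formula. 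A rescaling argument using the continuity of $\theta$, $\lambda$ and $\tau$ at $x_0$ shows that this ratio tends to $\theta(x_0)\lambda(x_0)$ as $r\searrow 0$, forcing the density of $\|T\|$ with respect to $\sH^k\res\set(T)$ to be at least $\theta(x_0)\lambda(x_0)$. The main technical obstacle is exactly this metric differentiation step: one has to prove the existence of metric differentials for the bi-Lipschitz parametrizations (so that a linearized picture at $x_0$ is available), invoke Kirchheim's area formula to align $\sH^k$ with the Lebesgue measure on the tangent space up to the factor $\lambda(x_0)$, and then construct global Lipschitz coordinates on $X$ whose tangential derivatives at $x_0$ realize a prescribed comass-attaining covector on the non-Euclidean normed space $T_{x_0}\set(T)$.
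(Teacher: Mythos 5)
The paper does not prove this statement; it is quoted directly from Ambrosio--Kirchheim \cite[Theorem 9.5]{ambrosio_kirchheim}, so there is no in-paper proof to compare against. Your upper bound is correct and is the intended argument: it is the comass inequality \eqref{area_factor_ineq} (which the paper records precisely for this purpose, citing the beginning of the proof of the same AK theorem) inserted into the intrinsic representation of Theorem \ref{intr_rep}; this exhibits $\theta\lambda\sH^k\res\set(T)$ as a competitor in the infimum defining $\|T\|$ and hence gives $\|T\|\leq\theta\lambda\sH^k\res\set(T)$.

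For the lower bound your plan is workable in principle but has a genuine gap as written: $\theta$, $\tau$, and $\lambda$ are only Borel measurable, so ``continuity at $x_0$'' is not available, and the ratio $T(f\mathbf{1}_{B(x_0,r)},\pi_1,\ldots,\pi_k)/\sH^k(B(x_0,r))$ has no reason to converge without first restricting to $\sH^k\res\set(T)$-Lebesgue points of $\theta\lambda$ together with points of approximate continuity of the orientation and the metric differential. Once that standard correction is inserted, the step you flag as ``the main technical obstacle'' is indeed the hard part and should not be waved through: given a Lebesgue point $x_0$, you must produce globally Lipschitz $\pi_1,\ldots,\pi_k$ on $X$ whose tangential restrictions to $T_{x_0}\set(T)$ approximately attain the comass of an optimal simple covector while keeping $\prod_i\Lip(\pi_i)$ controlled, and this relies on McShane extension combined with the finite-dimensional normed-space geometry that defines the area factor $\lambda(x_0)$. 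Ambrosio and Kirchheim avoid this pointwise construction by working through the rectifiable decomposition $T=\sum_j\varphi_{j\#}\bb{\theta_j}$ directly, computing $\|\varphi_{j\#}\bb{\theta_j}\|$ piece by piece via Kirchheim's metric area formula, which already encodes $\lambda$, and summing; that recycles the metric-differentiation machinery underlying Theorem \ref{intr_rep} and makes the lower bound essentially automatic without a blow-up. Both routes reach the same conclusion, but the decomposition route is shorter and does not require you to build comass-realizing coordinates at a point.
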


It will be used in the proof of Lemma \ref{cancellation} that for every \(T \in \sRak{k}(X)\), all Lipschitz functions \(\pi_1,\ldots,\pi_k\) on \(X\) and every measurable orientation \(\tau\) of \(\set(T)\), that
\begin{equation}\label{area_factor_ineq}
\left|\langle \Alt_k D\pi(x)|_{T_x\set(T)},\tau(x)\rangle\right| \leq \lambda_T(x)\cdot \prod_{i=1}^k \Lip(\pi_i)
\end{equation}
where \(\lambda_T\) is the area factor of \(\set(T)\) (cf. beginning of the proof of \cite[Theorem 9.5]{ambrosio_kirchheim}).

\subsubsection{Slicing}

Let now \(\pi: X \rightarrow \R\) be a Lipschitz map, \(k\geq 1\) an integer and \(T\) a \(k\)-dimensional (locally) normal current. Then the {\it slice} of \(T\) with respect to \(\pi\) at \(r \in \R\) is defined by
\[
\langle T,\pi,r\rangle \coloneqq \partial (T\res \{\pi \leq r\}) - (\partial T)\res \{\pi \leq r\}.
\]
The following is referred to as the {\it slicing theorem}: For almost every \(r\), the slice \(\langle T,\pi, r\rangle\) is a \((k-1)\)-dimensional (locally) normal current whose support is contained in \(\spt(T)\cap \{\pi = r\}\) and if \(T\) is a (locally) integral current, then \(\langle T,\pi,r\rangle\) is a (locally) integral current for almost every \(r\). Furthermore, for every \(a < b\) the following coarea inequality holds:
\[
\int_a^b \Mass(\langle T,\pi,r\rangle) \: dr \leq \Lip(\pi) \cdot \|T\|(\{a < \pi < b\}).
\]

\subsubsection{Convergence}

Let \(T_j,T\in \Iak{k}(X)\) (\(\Iloc{k}(X)\)), \(j\in \N\). One says that \(T_j\) {\it converges weakly} to \(T\) as \(j\rightarrow \infty\), written \(T_j \rightarrow T\) weakly as \(j\rightarrow \infty\), if for every \((f,\pi_1,\ldots,\pi_k)\) it holds that \(T_j(f,\pi_1,\ldots,\pi_k)\rightarrow T(f,\pi_1,\ldots,\pi_k)\) as \(j\rightarrow \infty\). Weak convergence commutes with the boundary operator i.e. \(\partial T_j \rightarrow \partial T\) weakly if \(T_j\rightarrow T\) weakly. Mass is lower semi-continuous with respect to weak convergence, i.e. \(\Mass(T) \leq \liminf_{j\rightarrow \infty}\Mass(T_j)\) if \(T_j\rightarrow T\) weakly and more generally, \(\|T\|(U)\leq \liminf_{j\rightarrow \infty} \|T_j\|(U)\) for every open set \(U\subseteq X\).

\subsubsection{Fillings and minimizers}

Let \(k\geq 1\) be an integer and \(Z \in \Zak{k-1}(X)\) be a cycle. A chain \(V\in \Iak{k}(X)\) is called a {\it filling} of \(Z\) if \(\partial V = Z\). The {\it filling volume} of \(Z\) is defined by
\[
\FillVol(Z)\coloneqq \inf \{\Mass(V) \mid V \in \Iak{k}(X),\, \partial V = Z\}.
\]
A filling \(V\in \Iak{k}(X)\) of \(Z\) is said to be {\it minimal} if \(\Mass(V) = \FillVol(Z)\). If \(X\) is locally compact and \(Z\) admits a filling, then it admits a minimal filling by the compactness theorem of Lang \cite[Theorem 8.10]{lang_local}.\\

A chain \(V \in \Iak{k}(X)\) is said to be {\it absolutely area minimizing} (or just {\it minimizing} for short) if \(V\) is a minimal filling of \(\partial V\). A local chain \(V\in \Iloc{k}(X)\) is said to be {\it absolutely area minimizing} if for every Borel set \(B\subseteq X\) such that \(V\res B \in \Iak{k}(X)\), \(V\res B\) is minimizing.

\subsubsection{Flat distance and flat convergence}

Given \(S,T \in \Iak{k}(X)\) the {\it flat distance} \(d_\sF(S,T)\) between them is defined by
\[
d_\sF(S,T) \coloneqq \inf\{\Mass(T-S - \partial V) + \Mass(V) \mid V \in \Iak{k+1}(X)\}.
\]
If \(T_j,T\in \Iak{k}(X)\), \(j\in \N\), and \(d_\sF(T,T_j)\rightarrow 0\) as \(j\rightarrow \infty\), one says that \(T_j\) converges to \(T\) in the flat topology as \(j\rightarrow \infty\). If \(T_j\) converges to \(T\) in the flat topology, it converges weakly to \(T\). It is a deep result due to Stefan Wenger that if \(X\) satisfies local cone-type inequalities, \(\sup_{j\in \N} (\Mass(T_j) + \Mass(\partial T_j)) < \infty\) and \(T_j \rightarrow T\) weakly, then \(T_j \rightarrow T\) in the flat topology as \(j\rightarrow \infty\). Further, if \(\partial T_j = 0\) for every \(j\in \N\) then \(\FillVol(T_j - T) \rightarrow 0\) as \(j\rightarrow \infty\) if \(T_j \rightarrow T\) weakly and \(\sup_{j\in \N} \Mass(T_j) < \infty\) \cite[Theorem 1.2]{wenger_flat}. A space \(X\) is said to satisfy {\it local cone-type inequalities} for \(\Iak{k}(X)\) if there exists a \(\delta > 0\) and a constant \(c_k \geq 0\) such that if \(Z\in \Zak{k}(X)\) and \(\diam(\spt(Z)) \leq \delta\) then there exists a filling \(V\in \Iak{k+1}(X)\) of \(Z\) with \(\Mass(V) \leq c_k\cdot \diam(\spt(Z))\cdot \Mass(Z)\). A space \(X\) is said to satisfy {\it cone-type inequalities} for \(\Iak{k}(X)\) if there exists a constant \(c_k > 0\) such that for every cycle \(Z\in \Zak{k}(X)\) there exists a filling \(V\in \Iak{k+1}(X)\) with \(\Mass(V)\leq c_k\cdot \diam(\spt(Z))\cdot \Mass(Z)\). A space which satisfies cone-type inequalities obviously satisfies local cone-type inequalities. By Theorem \ref{cone_ineq}, Hadamard spaces satisfy cone-type inequalities. Banach spaces also satisfy cone-type inequalities, cf. Proposition 2.10 and Section 2.1 in \cite{wenger_euclidean}.\\

Given \(T_j,T \in \Iloc{k}(X)\), \(j\in \N\), one says that \(T_j \rightarrow T\) in the {\it local flat topology} as \(j\rightarrow \infty\) if for every compact set \(K\subseteq X\) there is a sequence \(V_j \in \Iak{k+1}(X)\) such that 
\[
(\|T_j - T - \partial V_j\| + \|V_j\|)(K) \rightarrow 0
\]
as \(j\rightarrow \infty\). It follows from Wenger's result mentioned above that if 
\[
\sup_{j\in \N} (\|T_j\| + \|\partial T_j\|)(K) < \infty
\]
for every compact set \(K\subseteq X\) and \(T_j \rightarrow T\) weakly as \(j\rightarrow \infty\), then \(T_j\rightarrow T\) in the local flat topology as \(j\rightarrow \infty\).\\

An important property of (local) flat convergence is that the (local) flat limit of minimizers is a minimizer.

\subsection{Isoperimetric inequalities}

Now we will recall the cone-construction due to Ambrosio and Kirchheim and then some results on isoperimetric inequalities in Hadamard spaces, all of which are due to Stefan Wenger. The discussion is similar to \cite[Subsection 2.7]{kl_hrh}.

\subsubsection{Homotopies, cones, the optimal cone inequality and the monotonicity formula}

For a normal current \(T\in \Nak{k}(X)\) with bounded support, there is a product construction which gives a normal current \(\bb{0,1}\times T \in \Nak{k+1}([0,1]\times X)\) which satisfies
\[
\partial (\bb{0,1}\times T) = \bb{1}\times T - \bb{0}\times T - \bb{0,1}\times T.
\]
Here \(\bb{t}\times T \in \Nak{k}([0,1]\times X)\) is given by \((\bb{t}\times T)(f,\pi_1,\ldots,\pi_k) = T(f_t,(\pi_1)_t,\ldots,(\pi_k)_t)\) where \(f_t\coloneqq f(t,\cdot)\) and \((\pi_i)_t \coloneqq \pi_i(t,\cdot)\). If \(T\) is an integral current then \(\bb{0,1}\times T\) is also an integral current (cf. \cite[Subsection 2.3]{wenger_euclidean}). If \(h: [0,1]\times X \rightarrow X\) is Lipschitz, then 
\[
h_\#(\bb{t}\times T) = h_{t\#}T
\]
where \(h_t\coloneqq h(t,\cdot)\) so
\[
\partial h_\#(\bb{t}\times T) = h_{1\#}T - h_{0\#}T - h_\#(\bb{0,1}\times \partial T).
\]
Now, assume that \(X\) is a Hadamard space, fix \(p\in X\) and let \(\sigma_p: [0,1]\times X \rightarrow X\), \(\sigma_p(t,x)\coloneqq \sigma(t,p,x)\) (recall that \(\sigma\) is the geodesic bicombing of \(X\)). Then for any \(Z \in \Zak{k}(X)\) with bounded support it holds that
\[
\partial \sigma_{p\#}(\bb{0,1}\times Z) = (\sigma_p)_{1\#}Z - (\sigma_p)_{0\#}Z - \sigma_{p\#}(\bb{0,1}\times \partial Z) = Z,
\]
i.e. \(\sigma_{p\#}(\bb{0,1}\times Z)\) is a filling of \(Z\) and it is called the {\it cone over} \(Z\) from \(p\). Note in particular that any cycle in a Hadamard space has a filling. The following optimal cone-inequality is due to Stefan Wenger:

\begin{theorem}\label{cone_ineq}\emph{\cite[Theorem 4.1]{wenger_filling}}
Let \(X\) be a Hadamard space, \(Z \in \Zak{k}(X)\) and assume that \(\spt(Z) \subseteq B(p,r)\). Then 
\[
\Mass(\sigma_{p\#}(\bb{0,1}\times Z)) \leq \frac{r}{k+1}\cdot \Mass(Z).
\]
\end{theorem}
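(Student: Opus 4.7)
The approach is to reduce the mass bound to a Jacobian-type estimate via two elementary Lipschitz properties of the bicombing $\sigma_p: [0,1]\times X \to X$: (a) for each $t \in [0,1]$ the map $\sigma_p(t, \cdot)$ is $t$-Lipschitz by the $\CAT(0)$ comparison principle, and (b) for each $x\in X$ the curve $t\mapsto \sigma_p(t,x)$ is a geodesic of constant speed $d(p,x) \leq r$. These two anisotropic bounds combine in the $(k+1)$-dimensional wedge product that computes the mass of $\sigma_{p\#}(\bb{0,1}\times Z)$ to produce a Jacobian of order $t^k\cdot r$, whose temporal integration $\int_0^1 t^k\,dt = \frac{1}{k+1}$ accounts exactly for the factor claimed in the theorem.

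To execute this, one expands
$$\sigma_{p\#}(\bb{0,1}\times Z)(f, \pi_1, \ldots, \pi_{k+1}) = (\bb{0,1}\times Z)(f\circ \sigma_p, \pi_1\circ \sigma_p, \ldots, \pi_{k+1}\circ \sigma_p)$$
for a test tuple $(f, \pi_1,\ldots,\pi_{k+1})$ of compactly supported Lipschitz functions on $X$, and applies the intrinsic representation (Theorem \ref{intr_rep}) to the rectifiable $(k+1)$-current $\bb{0,1}\times Z$, whose rectifiable set is $[0,1]\times \set(Z)$ with orientation $\partial_t\wedge \tau(x)$. This rewrites the expression as an integral over $[0,1]\times \set(Z)$ against the wedge $\Alt_{k+1}D(\pi\circ \sigma_p)$ paired with $\partial_t\wedge \tau(x)$. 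Expanding this $(k+1)$-fold wedge along the temporal slot by multilinearity produces $k+1$ summands, each the product of one temporal derivative $\partial_t(\pi_i\circ \sigma_p)$, controlled by $r\cdot \Lip(\pi_i)$ via (b), and a spatial $k$-fold wedge on $T_x\set(Z)$ built from the functions $\pi_j\circ \sigma_p(t,\cdot)$, controlled by $t^k\cdot \lambda_Z(x)\cdot \prod_{j\neq i}\Lip(\pi_j)$ via (a) and the area factor inequality \eqref{area_factor_ineq}. The resulting pointwise estimate
$$\bigl|\bigl\langle \Alt_{k+1} D(\pi \circ \sigma_p)(t,x)\big|_{\R\times T_x\set(Z)}, \partial_t\wedge \tau(x)\bigr\rangle\bigr| \leq t^k\cdot r \cdot \lambda_Z(x)\cdot \prod_{i=1}^{k+1} \Lip(\pi_i),$$
combined with Fubini and the identity $\|Z\| = \theta\lambda_Z\sH^k\res\set(Z)$, delivers the desired mass bound.

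The main obstacle is the pointwise Jacobian estimate above: while (b) directly controls the temporal derivatives, the spatial bound requires the tangential derivative $D_x(\pi_j\circ \sigma_p(t,\cdot))|_{T_x\set(Z)}$ to have operator norm at most $t\cdot \Lip(\pi_j)$, which ultimately follows because $\pi_j\circ \sigma_p(t,\cdot)$ itself has Lipschitz constant at most $t\cdot \Lip(\pi_j)$ by (a), so \eqref{area_factor_ineq} applied to it produces the spatial factor. A cleaner alternative that sidesteps the intrinsic representation of $\bb{0,1}\times Z$ is to use the rectifiable decomposition $Z = \sum_j \varphi_{j\#}\bb{\theta_j}$ and estimate each summand separately via the classical area formula applied to the Lipschitz map $\sigma_p\circ (\id\times \varphi_j): [0,1]\times K_j \to X$, reducing the entire matter to a determinant bound in $\R^{k+1}$ of the form $|\det(\cdots)| \leq t^k \cdot r$.
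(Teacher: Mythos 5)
The paper does not prove this statement; it simply cites \cite[Theorem~4.1]{wenger_filling}, so there is no in-paper proof to compare against. Your proposal must therefore be judged on its own.

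The overall strategy is sound and is indeed the one Wenger follows: express the cone as a push-forward of the product current, pass to the intrinsic (or rectifiable) representation, and reduce to a pointwise Jacobian estimate of order $t^{k}r$, whose temporal integration yields $\frac{1}{k+1}$. The two anisotropic Lipschitz facts (a) and (b) are correct and are the right inputs. The gap is in the step that produces the pointwise estimate. Expanding the $(k+1)$-fold wedge along the temporal slot by multilinearity gives $k+1$ summands, and bounding each one by $r\cdot\Lip(\pi_i)\cdot t^{k}\lambda_{Z}(x)\prod_{j\neq i}\Lip(\pi_j)$ yields the total bound
\[
\bigl|\bigl\langle \Alt_{k+1}D(\pi\circ\sigma_p)(t,x),\,\partial_t\wedge\tau(x)\bigr\rangle\bigr|
\leq (k+1)\,t^{k}\,r\,\lambda_Z(x)\prod_{i=1}^{k+1}\Lip(\pi_i),
\]
with an extra factor of $k+1$. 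After integrating $\int_0^1 t^k\,dt=\frac{1}{k+1}$, this only proves $\Mass(\sigma_{p\#}(\bb{0,1}\times Z))\leq r\cdot\Mass(Z)$, which is the crude Lipschitz bound and not the theorem. The inequality you actually wrote down, without the $(k+1)$, is what Wenger proves and what one needs; but it is a nontrivial Hadamard-type (or Cauchy--Binet-type) inequality for the area factor of a product-type tangent space, not a consequence of the Laplace expansion. In the Euclidean model one sees it via $\det(P^{T}V)\leq\|\Alt_k P\|\cdot\|\Alt_k V\|$ together with Hadamard on $V$; but the approximate tangent spaces here carry non-Euclidean norms (hence the area factor $\lambda_Z$), and establishing the corresponding inequality for Kirchheim's metric Jacobian is the real content of \cite[Theorem~4.1]{wenger_filling}. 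Your ``cleaner alternative'' via the rectifiable decomposition and the area formula for $\sigma_p\circ(\id\times\varphi_j)$ runs into the same obstacle: the determinant you must bound is the $(k+1)$-dimensional metric Jacobian of a seminorm (the metric differential), not a Euclidean determinant, so ``a determinant bound in $\R^{k+1}$'' understates what is needed. In short, you have correctly reduced the theorem to the right pointwise estimate, but the argument offered for that estimate loses exactly the factor $\frac{1}{k+1}$ that makes the theorem interesting, and the normed-space Hadamard inequality that would close the gap is neither proved nor cited.
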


An important consequence of Theorem \ref{cone_ineq} is the following monotonicity formula.

\begin{theorem}\emph{\cite[Corollary 4.4]{wenger_filling}}
Let \(X\) be a Hadamard space and \(T\in \Iak{k}(X)\) (or \(T\in \Iloc{k}(X)\)) be minimizing. Then for every \(x\in \spt(T)\) and \(0 < r \leq s < d(p,\spt(\partial T))\) it holds that
\[
\frac{\|T\|(B(p,r))}{r^k} \leq \frac{\|T\|(B(p,s))}{s^k}.
\]
\end{theorem}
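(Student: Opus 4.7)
The plan is to slice $T$ by the $1$-Lipschitz distance function $d_p$, compare the restriction $T\res B(p,r)$ against the cone from $p$ over the slice, and integrate the resulting differential inequality. Writing $g(r) \coloneqq \|T\|(B(p,r))$, the aim is to establish $g'(r) \geq (k/r)\,g(r)$ for almost every $r \in (0, d(p,\spt(\partial T)))$, from which the monotonicity of $r \mapsto g(r)/r^k$ will follow by a standard integration-by-parts argument.

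First I would apply the slicing theorem to the map $d_p$: for almost every $r$ in the stated range, $\langle T, d_p, r\rangle$ is a $(k-1)$-dimensional integral cycle supported in $S(p,r)$. Since $r < d(p,\spt(\partial T))$ forces $(\partial T)\res\{d_p\leq r\} = 0$, the definition of the slice gives $\partial(T\res B(p,r)) = \langle T, d_p, r\rangle$. The cone $C_r \coloneqq \sigma_{p\#}(\bb{0,1}\times \langle T, d_p, r\rangle)$ is then another filling of this boundary, and Theorem \ref{cone_ineq} applied to the $(k-1)$-dimensional cycle $\langle T, d_p, r\rangle$ (supported in $B(p,r)$) yields $\Mass(C_r) \leq (r/k)\,\Mass(\langle T, d_p, r\rangle)$.

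Next I would invoke minimality. Because $T$ is minimizing, $T\res B(p,r)$ is itself a minimal filling of its boundary: any strictly cheaper filling could be spliced in place of $T\res B(p,r)$ to contradict minimality of $T$ — in the locally integral case, $T\res B(p,r) \in \Iak{k}(X)$ since $B(p,r)$ is bounded. Comparing with the cone gives
\[
g(r) \;=\; \Mass(T\res B(p,r)) \;\leq\; \Mass(C_r) \;\leq\; \frac{r}{k}\,\Mass(\langle T, d_p, r\rangle).
\]
The coarea inequality from slicing yields $\int_a^b \Mass(\langle T,d_p,r\rangle)\,dr \leq g(b)-g(a)$ for all $0<a<b< d(p,\spt(\partial T))$, and comparing with the Lebesgue decomposition of the non-decreasing function $g$ gives $\Mass(\langle T,d_p,r\rangle)\leq g'(r)$ for almost every $r$. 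Hence $g'(r)\geq (k/r)\,g(r)$ a.e.

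Finally, since $g$ is non-decreasing and hence of locally bounded variation, Lebesgue-Stieltjes integration by parts with $f(t) = t^{-k}$ gives
\[
\frac{g(s)}{s^k} - \frac{g(r)}{r^k} \;=\; \int_{(r,s]} t^{-k}\,dg(t) \,-\, k\int_r^s t^{-k-1}\,g(t)\,dt \;\geq\; 0,
\]
where the inequality uses $dg(t) \geq (k/t)\,g(t)\,dt$ as measures on $(0,d(p,\spt(\partial T)))$, the singular part of $dg$ being non-negative. I expect the main point requiring care to be the verification that $T\res B(p,r)$ is minimizing and that its boundary equals the slice — both standard, but worth checking against the definitions of (locally) integral currents — rather than the analytic integration step itself.
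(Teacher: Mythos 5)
Your proof is correct and is essentially the standard argument for the monotonicity formula in this setting: slice by the distance function, compare the restricted minimizer against the cone over the slice via the optimal cone inequality, and integrate the resulting differential inequality. The paper itself does not prove this statement but cites it as Wenger's \cite[Corollary 4.4]{wenger_filling}, and Wenger's original proof proceeds along exactly these lines, so you have in effect reconstructed the cited argument; the two subtleties you flag at the end (that $T\res B(p,r)$ is a minimal filling of the a.e.\ defined slice $\langle T,d_p,r\rangle$, and that this slice is a cycle because $r<d(p,\spt(\partial T))$) are handled correctly.
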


\subsubsection{The Euclidean isoperimetric inequality}

We will need Wenger's Euclidean isoperimetric inequality:

\begin{theorem}\emph{\cite[Theorem 1.2]{wenger_euclidean}}
Let \(X\) be a Hadamard space. For every integer \(k\geq 0\) there exists a number \(\gamma_k \geq 0\) such that for every \(Z\in \Zak{k}(X)\) there exists a filling \(V\in \Iak{k+1}(X)\) of \(Z\) with \(\Mass(V) \leq \gamma_k\cdot \Mass(Z)^\frac{k+1}{k}\).
\end{theorem}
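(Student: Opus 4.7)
The plan is to prove this by induction on the dimension $k$, with the cone inequality (Theorem \ref{cone_ineq}) as the fundamental building block and the slicing/coarea inequality providing the dimensional reduction; this is a metric adaptation of the Federer--Fleming deformation scheme. The base case (say $k=1$) is handled directly: an integral $1$-cycle $Z$ with mass $M$ decomposes into components whose diameters are controlled by $M$, so coning from a point in $\spt(Z)$ via Theorem \ref{cone_ineq} produces a filling of mass at most a constant multiple of $M^2$. (For $k=0$ the exponent $(k+1)/k$ is degenerate and the statement is interpreted with the convention that the inequality is vacuous.)

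For the inductive step, assume the inequality holds in dimension $k-1$ with constant $\gamma_{k-1}$, and let $Z\in \Zak{k}(X)$ have mass $M$. Fix $p\in X$ and consider the slices $S_r := \langle Z, d_p, r\rangle$ of $Z$ by the distance function. Because $\partial Z = 0$ and $S_r = \partial(Z\res\{d_p\leq r\})$, each $S_r$ is a $(k-1)$-cycle supported in $\{d_p = r\}$, and the coarea inequality gives $\int_0^\infty \Mass(S_r)\, dr \leq M$. By the inductive hypothesis, each $S_r$ admits a filling $W_r\in \Iak{k}(X)$ with $\Mass(W_r) \leq \gamma_{k-1}\Mass(S_r)^{k/(k-1)}$. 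The chain $\tilde{Z}_r := (Z\res\{d_p\leq r\}) - W_r$ has support in $B(p,r)$ and vanishing boundary, so Theorem \ref{cone_ineq} furnishes the cone filling $V_r := \sigma_{p\#}(\bb{0,1}\times \tilde{Z}_r)$ with
\[
\Mass(V_r) \leq \tfrac{r}{k+1}\bigl(\|Z\|(B(p,r)) + \gamma_{k-1}\Mass(S_r)^{k/(k-1)}\bigr).
\]
To fill $Z$ itself, one iterates the construction: the residual cycle $(Z\res\{d_p>r\}) + W_r$ is filled at larger radii, assembling a sequence of cone fillings supported in successive spherical shells.

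The main obstacle, and the technical core of the argument, is to choose the sequence of slicing radii so that the total filling mass scales like $M^{(k+1)/k}$. This is carried out by a stopping-time argument, picking radii at which the slice mass $\Mass(S_r)$ is small (as is guaranteed almost everywhere by the coarea bound) so that the inductive contribution stays controlled. Equivalently, one sets up a recursion for the optimal filling function $\phi_k(M) := \sup\{\FillVol(Z) : Z\in \Zak{k}(X),\, \Mass(Z)\leq M\}$ and extracts the power-law scaling via a Gronwall-type inequality, with the threshold radius $r \sim M^{1/k}$ reflecting the expected Euclidean scaling. Balancing the cone factor $r/(k+1)$ against the $(k-1)$-dimensional isoperimetric term then produces $\gamma_k$ in terms of $\gamma_{k-1}$ and closes the induction.
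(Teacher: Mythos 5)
Note first that the paper does not prove this statement — it is cited directly from Wenger \cite{wenger_euclidean} — so there is no in-paper argument to compare against. Your sketch aims at Wenger's proof, and the high-level plan (cone inequality plus slicing plus induction on dimension, with the Euclidean balance $r\sim\Mass(Z)^{1/k}$) is the right one. However, the geometry of your iteration does not work, and what is missing is not a bookkeeping detail but the central lemma. Coning concentric spherical shells around a single fixed point $p$ cannot produce the bound $\gamma_k\Mass(Z)^{(k+1)/k}$: the cone factor at the shell of radius $r'$ is $r'/(k+1)$, which grows without bound, and nothing forces the shell masses to decay to compensate. If $Z$ is supported far from $p$ (say in a unit ball at distance $D\gg\Mass(Z)^{1/k}$), all your slices $S_r$ vanish for $r<D$ and the first nontrivial cone contributes mass on the order of $D\cdot\Mass(Z)$, uncontrolled by $\Mass(Z)^{(k+1)/k}$; a long thin tube with $p$ at one end exhibits the same failure even when $p\in\spt(Z)$.

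Wenger's actual argument replaces the concentric shells by a decomposition of $Z$ into \emph{round pieces} $Z_1,\dots,Z_N$ and a remainder $Z'$: each $Z_i$ is supported in a ball of radius $\lesssim\Mass(Z_i)^{1/k}$ (so coning gives mass $\lesssim\Mass(Z_i)^{(k+1)/k}$), the decomposition is additive in mass, and $\Mass(Z')\le(1-\delta)\Mass(Z)$ for a fixed $\delta>0$. Superadditivity of $t\mapsto t^{(k+1)/k}$ bounds $\sum_i\Mass(Z_i)^{(k+1)/k}$ by $\Mass(Z)^{(k+1)/k}$, and the geometric decay of $\Mass(Z')$ makes the recursion on \emph{mass} (not on radius) telescope. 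Producing even one round piece is the technical core: one picks $x\in\spt(Z)$, compares the growth of $r\mapsto\|Z\|(B(x,r))$ against $r^k$, and uses a competing-radii (Chebyshev/maximal-function) argument together with the inductive isoperimetric inequality applied to a well-chosen slice $\langle Z,d_x,r\rangle$ to locate a radius at which the enclosed piece is both round and has a cheaply fillable slice boundary. This lemma is the content that your ``stopping-time argument'' and ``Gronwall-type inequality'' are gesturing at; as written they are placeholders, and without them the exponent $(k+1)/k$ does not emerge. Finally, calling the scheme ``a metric adaptation of the Federer--Fleming deformation'' is a misnomer: Wenger's contribution was precisely to avoid grid-based deformation, and the round-piece decomposition is its intrinsic substitute.
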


The following is a well known fact which can be derived from the Euclidean isoperimetric inequality.

\begin{corollary}\label{lowerbound}
Let \(X\) be a Hadamard space. For every \(k\geq 1\) there exists a constant \(\theta_k > 0\) such that if \(T\in \Iak{k}(X)\) is minimizing then 
\[
\|T\|(B(x,r)) \geq \theta_k\cdot r^k
\]
for every \(0 \leq r \leq d(x,\spt(\partial T))\).
\end{corollary}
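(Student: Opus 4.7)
The plan is the classical density lower bound argument: combine Wenger's Euclidean isoperimetric inequality with the coarea formula and the minimality of \(T\) to derive a differential inequality for the mass growth function. Fix \(x\in\spt(T)\) (the bound is nontrivial only then), set \(R:=d(x,\spt(\partial T))\) and \(m(r):=\|T\|(B(x,r))\) for \(0\leq r<R\); note \(m(0)=0\) since rectifiable mass measures are atomless.

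The first step is to produce a pointwise filling inequality. Since \(d_x\) is \(1\)-Lipschitz, the slicing theorem gives for a.e.\ \(r\in(0,R)\) that \(\langle T,d_x,r\rangle\in\Iak{k-1}(X)\), and since \(B(x,r)\) is disjoint from \(\spt(\partial T)\),
\[
\langle T,d_x,r\rangle=\partial(T\res B(x,r))\in\Zak{k-1}(X).
\]
For \(k\geq 2\), the Euclidean isoperimetric inequality produces a filling \(V_r\in\Iak{k}(X)\) of this slice with \(\Mass(V_r)\leq\gamma_{k-1}\Mass(\langle T,d_x,r\rangle)^{k/(k-1)}\). Since \(T\res B(x,r)\) is itself a filling and \(T\) is minimizing,
\[
m(r)\leq\Mass(V_r)\leq\gamma_{k-1}\Mass(\langle T,d_x,r\rangle)^{k/(k-1)}.
\]

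The second step is to feed this into the coarea inequality. The monotone function \(m\) is differentiable a.e.\ with \(m'(r)\geq\Mass(\langle T,d_x,r\rangle)\), so at a.e.\ \(r\in(0,R)\),
\[
m'(r)\geq\gamma_{k-1}^{-(k-1)/k}\,m(r)^{(k-1)/k},
\]
equivalently \((m^{1/k})'(r)\geq\bigl(k\,\gamma_{k-1}^{(k-1)/k}\bigr)^{-1}\). Integrating from \(0\) to \(r\), using monotonicity of \(m^{1/k}\) and \(m(0)=0\), yields \(m(r)\geq\theta_k r^k\) with \(\theta_k:=k^{-k}\gamma_{k-1}^{-(k-1)}\).

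The only subtlety I expect is the case \(k=1\), where the exponent \(k/(k-1)\) degenerates. Here one replaces the isoperimetric inequality by the direct observation that every nonzero integral \(0\)-cycle has mass at least \(2\) and can be filled by geodesics with mass at most half its mass times the diameter of its support; combined with \(\spt\langle T,d_x,r\rangle\subseteq S(x,r)\), minimality, and coarea, this shows that \(m(r)/r\) is non-decreasing, and a linear lower bound then follows from the positivity of the lower density of the integer rectifiable current \(T\) at the support point \(x\).
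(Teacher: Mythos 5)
The paper gives no proof of this corollary, only the hint that it follows from the Euclidean isoperimetric inequality, and your argument for \(k\geq 2\) is precisely the standard derivation that hint points to: slice, fill the slice isoperimetrically, compare with \(T\res B(x,r)\) by minimality via a cut-and-paste, and feed the resulting inequality \(m(r)\leq \gamma_{k-1}\Mass(\langle T,d_x,r\rangle)^{k/(k-1)}\) into the coarea bound \(m'(r)\geq \Mass(\langle T,d_x,r\rangle)\) to integrate a differential inequality. (One small remark: the statement needs the implicit hypothesis \(x\in\spt(T)\), which you correctly impose.)

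The case \(k=1\) as you wrote it has a genuine gap. You correctly observe that a nonzero integral \(0\)-cycle has mass at least \(2\), but then take a detour: the cone estimate together with minimality and coarea gives monotonicity of \(m(r)/r\), and you propose to finish via ``positivity of the lower density of \(T\) at \(x\).'' That last step does not yield a \emph{uniform} constant \(\theta_1\): the lower density \(\liminf_{r\searrow 0} m(r)/r\) is only known to be positive for \(\sH^1\)-a.e.\ point of \(\spt(T)\) (i.e.\ on \(\set(T)\)), not at every \(x\in\spt(T)\), and even where positive it is not bounded below independently of \(x\) and \(T\). In fact the detour is unnecessary: for a.e.\ \(r\in(0,R)\) the slice \(\langle T,d_x,r\rangle=\partial(T\res B(x,r))\) is an integral \(0\)-cycle, and it must be nonzero, for otherwise \(T\res B(x,r)\) would be a cycle and removing it from \(T\) would strictly decrease mass while preserving \(\partial T\) (contradicting minimality and \(x\in\spt(T)\)); hence \(\Mass(\langle T,d_x,r\rangle)\geq 2\) for a.e.\ \(r\), and coarea gives \(m(r)\geq 2r\) directly, so \(\theta_1=2\) works. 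With this fix the argument is complete.
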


\subsection{Asymptotic cones}

The following definitions and facts are all well known, cf. e.g. \cite[pp. 77-81]{bh} and \cite[Section 2.4]{kleiner_leeb}. 

\begin{definition}
Let \(\sP(\N)\) denote the power set of \(\N\), i.e. the set of all subsets of \(\N\). A finitely additive function \(\omega: \sP(\N) \rightarrow \{0,1\}\) which satisfies that \(\omega(\N) = 1\) and \(\omega(S) = 0\) for every finite \(S\subseteq \N\) is called a {\it non-principal ultrafilter} on \(\N\).
\end{definition}

Let \((x_j)_{j\in \N}\) be a sequence of points in a metric space \(X\). One says that \(x = \lim_\omega x_j\) if for every \(\varepsilon > 0\) it holds that \(\omega(\{j\in \N \mid d(x,x_j) < \varepsilon\}) = 1\). If all the points \(x_j\), \(j\in \N\), are contained in a compact set then there exists a unique \(x\in X\) such that \(x = \lim_\omega x_j\).

\begin{definition}
Let \((X_j,d_j,p_j)\), \(j\in \N\), be a sequence of pointed metric spaces and \(\omega\) be a non-principal ultrafilter on \(\N\) and for each \(j\in \N\), let \(Y_j\subseteq X_j\). The {\it ultralimit} \(\lim_\omega (Y_j,d_j,p_j)\) is the metric space which one obtains from the space
\[
\bigg\{(x_j)_{j\in \N} \in \prod_{j\in \N} Y_j\;\Big|\; \sup_{j\in \N}d_j(p_j,x_j) < \infty\bigg\},
\]
endowed with pseudometric 
\[
d_\omega((x_j)_{j\in\N},(y_j)_{j\in\N}) \coloneqq \lim_\omega d_j(x_j,y_j),
\]
after identifying points at zero distance; the equivalence class of \((x_j)_{j\in \N}\) is denoted by \([(x_j)_{j\in\N}]\). The metric on \(\lim_\omega (Y_j,d_j,p_j)\) is still denoted by \(d_\omega\).
\end{definition}

Ultralimits are always complete metric spaces.

\begin{definition}
Let \((X,d)\) be a metric space, \(0 < r_j \nearrow \infty\) and \(p_j \in X\), \(j\in \N\), be sequences and \(\omega\) a non-principal ultrafilter on \(\N\). The ultralimit \(\lim_\omega (X,\frac{1}{r_j}d,p_j)\) is called an {\it asymptotic cone of} \(X\).
\end{definition}

If \((X,d)\) is a metric space, \(p\in X\), \(0 < r_j\nearrow \infty\), \(j\in \N\), a sequence and \(\omega\) a non-principal ultrafilter on \(\N\), then it is customary to let \(p_\omega \coloneqq [(p)_{j\in \N}]\in \lim_\omega (X,\frac{1}{r_j}d,p)\).\\

We need the following fact.

\begin{proposition}\emph{\cite[Proposition 2.2]{wenger_compact}}
\label{embedding_lemma}
Let \((X_j,d_j,p_j)\), \(j\in \N\), be a sequence of pointed complete metric spaces, \(k\geq 0\) an integer and for each \(j\in \N\), let \(T_j \in \Iak{k}(X_j)\). Assume that \(\sup_{j\in \N} d_j(p_j,\spt(T_j)) < \infty\), \(\sup_{j\in \N}\diam(\spt(T_j)) < \infty\) and that there exists a complete metric space \(Z\), isometric embeddings \(\varphi_j: X_j \rightarrow Z\) and \(T\in \Iak{k}(Z)\) such that \(\varphi_{j\#}T_j \rightarrow T\) weakly as \(j\rightarrow \infty\). Then for every non-principal ultrafilter \(\omega\) on \(\N\) there exists an isometric embedding \(\varphi: \spt(T) \rightarrow \lim_\omega (\spt(T_j),d_j,p_j)\).
\end{proposition}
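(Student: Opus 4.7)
The plan is to construct $\varphi$ pointwise on $\spt(T)$. Given $x\in \spt(T)$, I would select, for each $j\in\N$, a point $y_j\in \spt(T_j)$ whose image $\varphi_j(y_j)\in Z$ lies as close as possible to $x$, and then set $\varphi(x) := [(y_j)_{j\in\N}]$. Concretely, put
\[
\delta_j := \inf\{\, d_Z(\varphi_j(z),x) : z\in \spt(T_j)\,\}
\]
and pick $y_j\in \spt(T_j)$ with $d_Z(\varphi_j(y_j),x) < \delta_j + \tfrac{1}{j}$. The sequence $(y_j)$ represents a valid element of $\lim_\omega(\spt(T_j),d_j,p_j)$: any $z_j\in \spt(T_j)$ satisfies $d_j(p_j,z_j)\le d_j(p_j,\spt(T_j))+1\le C$ for some $C$ independent of $j$, and $d_j(z_j,y_j)\le \diam(\spt(T_j))$, both uniformly bounded by hypothesis.

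The main obstacle is to show that $\lim_\omega \delta_j = 0$. I would argue by contradiction: if $\omega(\{j:\delta_j\ge\varepsilon\}) = 1$ for some $\varepsilon>0$, then for $\omega$-almost every $j$ the set $\varphi_j(\spt(T_j))$ is disjoint from $U_Z(x,\varepsilon)$, so
\[
\|\varphi_{j\#}T_j\|(U_Z(x,\varepsilon)) = 0
\]
for all such $j$. Extracting an ordinary subsequence inside this $\omega$-full set and invoking the lower semicontinuity of mass on open sets under weak convergence, I get
\[
\|T\|(U_Z(x,\varepsilon)) \le \liminf_{j\to\infty}\|\varphi_{j\#}T_j\|(U_Z(x,\varepsilon)) = 0,
\]
contradicting $x\in\spt(T)=\spt(\|T\|)$. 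Hence $\lim_\omega\delta_j = 0$, and therefore $\lim_\omega d_Z(\varphi_j(y_j),x)=0$ as well.

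Finally, I would check that $\varphi$ is well defined and isometric by exploiting that each $\varphi_j$ is an isometric embedding. If $(y_j),(y_j')$ are two selections for the same $x$, the triangle inequality gives
\[
d_j(y_j,y_j') = d_Z(\varphi_j(y_j),\varphi_j(y_j')) \le d_Z(\varphi_j(y_j),x) + d_Z(x,\varphi_j(y_j')) \xrightarrow{\omega} 0,
\]
so $[(y_j)] = [(y_j')]$. For $x,x'\in\spt(T)$ with selections $(y_j),(y_j')$, the same triangle inequality yields
\[
\bigl|d_Z(\varphi_j(y_j),\varphi_j(y_j')) - d_Z(x,x')\bigr| \le d_Z(\varphi_j(y_j),x) + d_Z(\varphi_j(y_j'),x'),
\]
whose right-hand side tends to $0$ along $\omega$. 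Therefore
\[
d_\omega(\varphi(x),\varphi(x')) = \lim_\omega d_j(y_j,y_j') = d_Z(x,x'),
\]
giving the isometric embedding. The crucial content sits entirely in the mass-contradiction step; the rest is routine verification using that $\varphi_j$ preserves distances.
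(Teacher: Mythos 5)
The paper cites this result from Wenger's compactness paper without reproducing a proof, so there is no in-paper argument to compare against; your construction, however, is correct and is essentially the standard one. You build $\varphi$ pointwise by near-minimal selection $y_j\in\spt(T_j)$, show $\lim_\omega\delta_j=0$ via lower semicontinuity of mass on open balls under weak convergence (an $\omega$-full set is infinite, so $\liminf_j\|\varphi_{j\#}T_j\|(U_Z(x,\varepsilon))=0$, contradicting $x\in\spt(T)$), and then well-definedness and the isometry property drop out of the triangle inequality since each $\varphi_j$ is isometric. One small slip of no consequence: the bound you state for $d_j(p_j,z_j)$ should read $d_j(p_j,\spt(T_j))+\diam(\spt(T_j))$ rather than ``$d_j(p_j,\spt(T_j))+1$''; both hypotheses are needed together to get $\sup_j d_j(p_j,y_j)<\infty$. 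Otherwise the argument is complete and sound.
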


\subsection{Tits cone and Tits boundary}

Most of the following subsection is an adaption of the discussion on Tits cones and Tits boundaries in \cite{kl_hrh}. Also, cf. \cite{bh}.\\

Let \(X\) be a Hadamard space. On the set
\[
\R_+X \coloneqq \{ \xi: \R_+ \rightarrow X\mid \xi \text{ is a geodesic}\}
\]
of rays in \(X\) of arbitrary speed we define the equivalence relation \(\sim\) by 
\[
\xi \sim \eta \quad \Longleftrightarrow \quad \sup_{t\geq 0}d(\xi(t),\eta(t)) < \infty.
\]
The equivalence class of \(\xi \in \R_+X\) is denoted by \([\xi]\). The set of equivalence classes \(\R_+X/\sim\) endowed with the Tits metric 
\[
d_T([\xi],[\eta]) \coloneqq \lim_{t\rightarrow \infty}\frac{1}{t}d(\xi(t),\eta(t)), \quad \xi,\eta \in \R_+X,
\]
forms a Hadamard space which is called the {\it Tits cone} of \(X\) and denoted by \(C_TX\). The constant rays form an equivalence class which is denoted by \(o\). The unit sphere centered at \(o\) is denoted by \(\partial_T X\) and called the {\it Tits boundary} of \(X\). The Tits cone is the Euclidean cone over \(\partial_TX\) endowed with the angular metric \(\angle_T\) defined by
\[
\angle_T (u,v) \coloneqq \sup_{p\in X} \angle_p (u,v)
\]
for every \(u,v\in \partial_TX\). Here \(\angle_p(u,v)\) denotes the angle at \(p\) between the unit speed geodesics emanating from \(p\) towards \(u\) and \(v\), respectively.\\

For each \(\lambda \in \R\) there is a homothety 
\[
C_TX \longrightarrow C_TX, \qquad [\xi]\longmapsto \lambda [\xi]\coloneqq [\xi(\lambda\,\cdot)].
\]

For every \(p\in X\) and every \(v\in C_TX\) there exists a unique ray \(\xi \in v\) which emanates from \(p\) so \(C_TX\) can be identified with the set of rays emanating from \(p\). We let
\[
\can_p: C_TX \longrightarrow X, \qquad v \longmapsto \xi(1),
\]
where \(\xi \in v\) is the unique ray emanating from \(p\). The map \(\can_p\) is \(1\)-Lipschitz.\\

The {\it Tits boundary of a set} \(A\subseteq X\) is denoted by \(\partial_TA\) and defined as the set of points \(u\in \partial_TX\) such that \(\frac{1}{r_j}d(x_j,\can_p(r_ju))\rightarrow 0\) as \(j\rightarrow \infty\) for some sequences \(x_j \in A\) and \(0 < r_j \nearrow \infty\), \(j\in \N\).\\

We will need the following lemma due to Bernhard Leeb:

\begin{lemma}\label{asympttits}\emph{\cite[Lemma 10.6]{kleiner_local}}
Let \((X,d)\) be a proper Hadamard space, \(p\in X\) a point, \(0 < r_j \nearrow \infty\), \(j\in \N\), a sequence, \(\omega\) a non-principal ultrafilter on \(\N\) and \(X_\omega \coloneqq \lim_\omega (X, \frac{1}{r_j}d,p)\). Then the map
\[
C_TX \longrightarrow X_\omega,\qquad v \longmapsto [(\can_p(r_jv))_{j\in\N}],
\]
is an isometric embedding.
\end{lemma}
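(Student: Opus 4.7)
The strategy is to verify that the candidate map $\Phi : C_TX \to X_\omega$, $v \mapsto [(\can_p(r_j v))_{j\in\N}]$, is well-defined and distance-preserving; since any distance-preserving map between metric spaces is automatically injective, this will yield the isometric embedding claim without any further work.

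For well-definedness, given $v\in C_TX$ I take the unique ray $\xi\in v$ emanating from $p$ (so $\xi$ has speed $|v|:=d_T(v,o)$). The homothety identity $r_j v = [\xi(r_j\,\cdot)]$ tells me that the unique ray from $p$ in the class $r_j v$ is $t\mapsto \xi(r_j t)$, and hence $\can_p(r_j v)=\xi(r_j)$. Consequently
\[
\tfrac{1}{r_j}\, d\bigl(p,\can_p(r_j v)\bigr) \;=\; \tfrac{1}{r_j}\cdot r_j|v| \;=\; |v|
\]
is uniformly bounded in $j$, so $(\can_p(r_jv))_{j\in\N}$ sits in the domain of the ultralimit construction and represents a bona fide point of $X_\omega$.

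The main step is the distance computation. For $v,w\in C_TX$ with representatives $\xi,\eta$ from $p$, by definition of the ultralimit metric
\[
d_\omega\bigl(\Phi(v),\Phi(w)\bigr) \;=\; \lim_\omega \tfrac{1}{r_j}\, d\bigl(\xi(r_j),\eta(r_j)\bigr),
\]
while $d_T(v,w) = \lim_{t\to\infty}\tfrac{1}{t}\, d(\xi(t),\eta(t))$ by the definition given in the paper. The key observation is that $t\mapsto d(\xi(t),\eta(t))$ is convex on $[0,\infty)$ — the standard CAT$(0)$ convexity of the distance function along two geodesics sharing the initial point $p$ — and vanishes at $t=0$; therefore $t\mapsto \tfrac{1}{t}\,d(\xi(t),\eta(t))$ is non-decreasing and bounded above by $|v|+|w|$, so the genuine limit as $t\to\infty$ exists. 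The ultralimit along $r_j\to\infty$ then coincides with this genuine limit, giving $d_\omega(\Phi(v),\Phi(w)) = d_T(v,w)$.

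There is no serious obstacle here; the only thing one has to be careful about is the dictionary between elements of $C_TX$ (equivalence classes of rays of arbitrary speed) and their unique representatives based at $p$, and how this dictionary interacts with the homothety $v\mapsto r_j v$. The properness of $X$ is part of the setup (it is what makes the target $X_\omega$ a Hadamard space with a reasonable Tits structure) but it is not actually invoked at any step of the isometric-embedding argument itself.
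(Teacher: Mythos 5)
Your argument is correct. The paper does not prove this lemma itself --- it is quoted directly from Kleiner (\cite[Lemma 10.6]{kleiner_local}) --- so there is no in-paper proof to compare against; your self-contained argument via the identity $\can_p(r_jv)=\xi(r_j)$, CAT$(0)$ convexity of $t\mapsto d(\xi(t),\eta(t))$ with value $0$ at $t=0$ (forcing $\tfrac{1}{t}d(\xi(t),\eta(t))$ to be non-decreasing and converge to $d_T(v,w)$), and the fact that ultralimits of convergent sequences agree with ordinary limits, is the standard and correct way to establish the embedding.
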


\subsection{Conical cycles}

Let \(X\) be a proper Hadamard space and \(p\in X\) a point. A cycle \(R\in \Zloc{k}(X)\) is said to be {\it conical with respect to} \(p\) if \((\varrho_{p,\lambda})_\# R = R\) for every \(0 < \lambda \leq 1\). Given a cycle \(\bar{Z} \in \Zak{k-1}(\partial_TX)\), let \(\bar{R}\coloneqq h_\#(\bb{0,1}\times \bar{Z}) \in \Iak{k}(C_TX)\), where 
\[
h: \R \times C_TX \longrightarrow C_TX, \qquad (\lambda,v)\longmapsto \lambda v,
\]
denote the cone over \(\bar{Z}\) from \(o\in C_TX\). Then, for every \(0 \leq r \leq 1\) it holds that \(h_{r\#}\bar{R} = \bar{R}\res B(0,r)\), where \(h_r(u)\coloneqq r u\), \(u\in C_TX\), so there is a well defined cycle \(R\in \Zloc{k}(X)\) which is conical with respect to \(p\in X\) and determined by the condition that \(\pi_{r\#}\bar{R} = R\res B(p,r)\) for every \(r \geq 0\). Here \(\pi_r(u) \coloneqq \can_p(ru)\), \(u\in C_TX\). Furthermore, \(\Theta_\infty(R) \leq \Mass(\bar{R})\). The cycle \(R\) is called the {\it the cone from \(p\) over \(\bar{Z}\).} A partial converse is given by the following theorem due to Kleiner and Lang.

\begin{theorem}\label{lift}\emph{\cite[Theorem 9.3]{kl_hrh}}
Let \(X\) be a proper Hadamard space and \(p\in X\) a point. Assume that \(R\in \Zloc{k}(X)\) is conical with respect to \(p\), \(\Theta\coloneqq \Theta_\infty(R) < \infty\) and that \(\partial_T \spt(R)\) is a compact subset of \(\partial_TX\). Then there exists \(\bar{R}\in \Iak{k}(C_TX)\) such that \(h_{r\#}\bar{R} = \bar{R}\res B(o,r)\) for every \(0 \leq r \leq 1\) and \(\pi_{r\#}\bar{R} = R\res B(p,r)\) for every \(r > 0\). Furthermore, it holds that \(\|\bar{R}\|(B(o,r)) = \Theta \cdot r^k\) for every \(0 \leq r \leq 1\).
\end{theorem}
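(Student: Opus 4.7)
The plan is to realize $\bar R$ as a weak ultralimit of rescaled copies of $R$ inside an asymptotic cone of $X$. Compactness of $\partial_T\spt(R)$ in the Tits metric forces the resulting limit to lie inside the isometric copy of $C_TX\subseteq X_\omega$ provided by Lemma~\ref{asympttits}, after which the required identities, including the mass formula, fall out of the conical invariance $(\varrho_{p,\lambda})_\#R=R$.

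\emph{Setup and extraction.} Fix a non-principal ultrafilter $\omega$ on $\N$ and a sequence $0<r_j\nearrow\infty$, form $X_\omega:=\lim_\omega(X,\tfrac{1}{r_j}d,p)$, and identify $C_TX$ with its image under $\iota\colon v\mapsto[(\can_p(r_jv))_j]$. Conicality combined with the Lipschitz mass bound for pushforwards gives that $s\mapsto\|R\|(B(p,s))/s^k$ is non-decreasing, so $\|R\|(B(p,s))\leq\Theta s^k$ for every $s\geq 0$. Using the slicing coarea inequality for $d_p$ over $(r_j,2r_j)$, I select $\rho_j\in(r_j,2r_j)$ so that $R_j:=R\res B(p,\rho_j)\in\Iak{k}(X)$ has, in the rescaled metric $d_j:=d/r_j$, uniformly bounded mass and boundary mass and support inside $B_{d_j}(p,2)$. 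Isometrically embedding every $(X,d_j)$ into a common complete space and applying Wenger's compactness theorem for integral currents, I extract an $\omega$-weak limit $\bar R\in\Iak{k}(X_\omega)$; Proposition~\ref{embedding_lemma} places $\spt(\bar R)$ inside $\lim_\omega(\spt(R_j),d_j,p)\subseteq X_\omega$.

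\emph{Support in $C_TX$ and the identities.} Given $y=[(y_j)]\in\spt(\bar R)$, set $s_j:=d(p,y_j)$ and $t:=\lim_\omega s_j/r_j\in[0,2]$. If $t=0$ then $y=\iota(o)$; otherwise $s_j\to\infty$ along $\omega$ and the ray from $p$ through $y_j\in\spt(R)$ defines a direction $u_j\in\partial_T\spt(R)$ with $\can_p(s_ju_j)=y_j$. Tits-compactness produces $u:=\lim_\omega u_j\in\partial_T\spt(R)$, and the CAT(0) comparison bound $d_j(y_j,\can_p(r_jtu))\leq\sqrt{(s_j/r_j)^2+t^2-2(s_j/r_j)t\cos\angle_T(u_j,u)}$ forces $y=\iota(tu)$; hence $\bar R\in\Iak{k}(C_TX)$. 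For $0<r\leq 1$ the maps $\varrho_{p,r/r_j}\colon(X,d_j)\to(X,d)$ and $\varrho_{p,r}\colon(X,d_j)\to(X,d_j)$ are uniformly $r$-Lipschitz, and $\varrho_{p,\lambda}(\can_p(su))=\can_p(\lambda su)$ shows that their ultralimits restrict on $\iota(C_TX)$ to $\pi_r$ and $h_r$, respectively. Conicality together with injectivity of $\varrho_{p,\lambda}$ gives $(\varrho_{p,r/r_j})_\#R_j=R\res B(p,r)$ and $(\varrho_{p,r})_\#R_j=R_j\res B(p,rr_j)$, and passing to the $\omega$-limit therefore produces $\pi_{r\#}\bar R=R\res B(p,r)$ and $h_{r\#}\bar R=\bar R\res B(o,r)$ (taking $r=1$ shows $\bar R$ is supported in $\bar B(o,1)$). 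Lower semicontinuity of mass bounds $\|\bar R\|(B(o,1))\leq\Theta$, while $\pi_{r\#}\bar R=R\res B(p,r)$ combined with $\pi_r$ being $r$-Lipschitz on $B(o,1)$ yields $\|R\|(B(p,r))/r^k\leq\|\bar R\|(B(o,1))$; letting $r\to\infty$ and invoking the monotonicity above forces equality, and the homothety identity then delivers $\|\bar R\|(B(o,r))=\Theta r^k$ for every $0\leq r\leq 1$.

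\emph{Main obstacle.} The delicate step is to force $\spt(\bar R)\subseteq\iota(C_TX)$: without Tits-compactness of $\partial_T\spt(R)$, the directions of points $y_j\in\spt(R)$ at distance of order $r_j$ from $p$ could wander off in the Tits metric, leaving their ultralimit stranded in $X_\omega\setminus\iota(C_TX)$ and breaking the identification $\bar R\in\Iak{k}(C_TX)$ on which the subsequent verification of the identities depends.
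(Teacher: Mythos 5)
Your overall strategy—rescale $R$ along a sequence of scales, invoke compactness to extract a limit current, and use Tits--compactness of $\partial_T\spt(R)$ together with CAT(0) comparison to force the support of that limit into the isometrically embedded Tits cone—is exactly the route taken by Kleiner--Lang, as the paper itself recalls in the proof of Claim~\ref{lift_asympt} (they embed the spaces $(\spt(R)\cap B(p,r),\tfrac1r d)$ into a common compact space and show the rescaled currents converge to a current supported on a copy of $K\subseteq C_TX$). Your support argument, including the angle comparison $d_j(y_j,\can_p(r_jtu))^2\leq(s_j/r_j)^2+t^2-2(s_j/r_j)t\cos\angle_T(u_j,u)$, is correct and captures the crucial geometric point. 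You also correctly flag Tits-compactness as the essential ingredient.

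The derivation of the identities, however, has real problems as written. First, the computations are off by the factor $\rho_j/r_j$. Since $R_j=R\res B(p,\rho_j)$ and conicality (plus injectivity of $\varrho_{p,\lambda}$ and the fact that $\spt(R)$ is a union of rays from $p$) give $(\varrho_{p,\lambda})_\#(R\res B(p,s))=R\res B(p,\lambda s)$, one gets $(\varrho_{p,r/r_j})_\#R_j=R\res B(p,r\rho_j/r_j)$, not $R\res B(p,r)$; the factor $\rho_j/r_j\in(1,2)$ need not tend to $1$. The companion claim $(\varrho_{p,r})_\#R_j=R_j\res B(p,rr_j)$ is incorrect for the same reason ($=R\res B(p,r\rho_j)$). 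Both can be repaired by rescaling everything by $\rho_j$ instead of $r_j$, but as stated the formulas are wrong. Second, and more importantly, the sentence ``passing to the $\omega$-limit therefore produces $\pi_{r\#}\bar R=R\res B(p,r)$'' is a genuine gap: $\bar R$ arises from Wenger's compactness theorem as a flat limit in an auxiliary space $Z$ (and then is transported to $X_\omega$ via Proposition~\ref{embedding_lemma}), whereas the push-forwards $(\varrho_{p,r/\rho_j})_\#R_j$ live in the original $X$. Relating the two requires an argument of the type carried out in the proof of Proposition~\ref{reliso}—encoding the sequence into a single auxiliary space $Y$, transplanting Lipschitz test functions via McShane and Arzelà--Ascoli, and controlling the correction terms using the uniform mass and boundary-mass bounds. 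You give no indication of how you intend to do this, and the statement does not follow simply by ``taking the ultralimit of the maps.'' Third, the theorem asserts $\pi_{r\#}\bar R=R\res B(p,r)$ for all $r>0$, while you only treat $0<r\leq1$; the argument does extend (the identity $\varrho_{p,r/\rho_j}(\can_p(\rho_jv))=\can_p(rv)$ holds once $\rho_j>r$), but this should be said. Finally, a small but necessary point: that the ray from $p$ through $y_j$ stays in $\spt(R)$, so that $u_j\in\partial_T\spt(R)$, uses $\spt(R)\subseteq\varrho_{p,\lambda}(\spt(R))$ for all $\lambda\in(0,1]$ together with the non-branching of CAT(0) geodesics; this observation is tacit in your write-up and should be made explicit.
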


\begin{remark}
We refer to \(\bar{R}\) as the {\it lift} of \(R\) to \(C_TX\).
\end{remark}

\section{Immovable and strongly immovable cycles}\label{imm_sect}

The notion of immovable cycles, a sort of a higher dimensional analogue of isolated points, was originally introduced by Jingyin Huang for singular homology classes in \cite{huang}. We want to formulate the concept for integral cycles and for that we propose the following definition.

\begin{definition}(cf. \cite[Definition 1.3]{huang}) 
Let \(X\) be a complete metric space, \(k\geq 0\) and integer. A cycle \(Z\in \Zak{k}(X)\) is said to be {\it immovable} if for every chain \(T\in \Iak{k+1}(X)\) it holds that
\[
\Mass(Z + \partial T) = \Mass(Z) + \Mass(\partial T).
\]
\end{definition}

Now we introduce the class of cycles for which we will solve the asymptotic Plateau problem in Section \ref{solving}. From now on in this section, \(X\) will denote a fixed Hadamard space. 

\begin{definition}\label{str_imm_def}
Let \(k\geq 1\) be an integer, \(\bar{Z}\in \Zak{k-1}(\partial_T X)\) and \(\bar{R}\in \Iak{k}(C_T X)\) the cone from \(o\) over \(\bar{Z}\). One says that \(\bar{Z}\) is {\it strongly immovable} or {\it immovable in the strong sense} if for every sequence \(0 < r_j \nearrow \infty\), some point \(p\in X\) and every non-principal ultrafilter \(\omega\) on \(\N\) it holds that
\begin{equation}\label{imm_cond}
\Mass(\bar{T} - \iota_\#\bar{R}) + \Mass(\bar{R}) = \Mass(\bar{T})
\end{equation}
for every \(\bar{T} \in \Iak{k}(X_\omega)\) with \(\partial \bar{T} = \iota_\#\bar{Z}\), i.e. every filling of \(\iota_\#\bar{Z}\) contains \(\iota_\#\bar{R}\) as a piece. Here \(X_\omega \coloneqq \lim_\omega (X, \frac{1}{r_j}d,p)\) and \(\iota: C_TX \rightarrow X_\omega\) denotes the canonical embedding from Lemma \ref{asympttits}.
\end{definition}

\begin{remark}
As \(\lim_\omega (X,\frac{1}{r_j}d,p)\) and \(\lim_\omega (X,\frac{1}{r_j}d,q)\) are isometric for every sequence \(0 < r_j \nearrow \infty\), every non-principal ultrafilter \(\omega\) on \(\N\) and all points \(p,q\in X\), it follows that if the condition in the previous definition is satisfied for some \(p \in X\) then it holds for every \(p \in X\).
\end{remark}

\begin{remark}\label{str_imm_eqv}
It is easy to see that we could equivalently have required that
\begin{equation}\label{vcond}
\Mass(\iota_\#\bar{R} + \partial \bar{V}) = \Mass(\bar{R}) + \Mass(\partial \bar{V})
\end{equation} 
for every \(\bar{V}\in \Iak{k+1}(X_\omega)\): Assume that \eqref{vcond} holds for every \(\bar{V} \in \Iak{k+1}(X_\omega)\). Let \(\bar{T} \in \Iak{k}(X_\omega)\) be a filling of \(\iota_\#\bar{Z}\) and \(\bar{V} \in \Iak{k+1}(X_\omega)\) a filling of \(\bar{T} - \iota_\#\bar{R}\). Then
\[
\Mass(\bar{T}-\iota_\#\bar{R}) + \Mass(\bar{R}) = \Mass(\partial \bar{V}) + \Mass(\bar{R}) = \Mass(\iota_\#\bar{R} + \partial\bar{R}).
\]
Conversely, assume that \eqref{imm_cond} holds for every filling \(\bar{T}\in \Iak{k}(X_\omega)\) of \(\iota_\#\bar{Z}\). Let \(\bar{V}\in \Iak{k+1}(X_\omega)\) and \(\bar{T}\coloneqq \iota_\#\bar{R} + \partial \bar{V}\). Then 
\[
\Mass(\iota_\# \bar{R} + \partial \bar{V}) = \Mass(\bar{T}) = \Mass(\bar{R})+\Mass(\bar{T}-\iota_\#\bar{R}) = \Mass(\bar{R}) + \Mass(\partial \bar{V})
\]
so \eqref{vcond} holds.
\end{remark}

\begin{example}\label{flat_example}
Let \(k\geq 1\) be an integer and \(F\) a \(k\)-flat in \(X\). Let \(R\in \Zloc{k}(X)\) be any local \(k\)-cycle such that \(F = \spt(R)\). As \(R\) has bounded density at infinity, is conical (with respect to any point in \(F\)) and \(\partial_T \spt(R) = \partial_TF\) is compact, \(R\) can be lifted to \(\bar{R} \in \Iak{k}(C_TX)\); let \(\bar{Z}\coloneqq \partial \bar{R}\in \Zak{k-1}(\partial_TX)\). Now, by \cite[Corollary 1.19]{hks_morse1}, the flat \(F\) is a so called {\it Morse quasiflat} if \(F\) does not bound a flat half-space and the stabilizer of \(F\) in the isometry group of \(X\) acts cocompactly on \(F\), i.e. there exists a compact set \(K\subseteq F\) such that for every \(x \in F\) there exists an isometry \(\varphi: X \rightarrow X\) such that \(\varphi(F) = F\) and \(\varphi(x) \in K\). In that case, \(\bar{Z}\) is strongly immovable as if \(p \in F\), \(0 < r_j \nearrow \infty\) and \(\omega\) is a non-principal ultra-filter on \(\N\), the flat \(F_\omega \coloneqq \lim_\omega (F,\frac{1}{r_j}d,p) \subseteq X_\omega \coloneqq \lim_\omega (X, \frac{1}{r_j}d,p)\) satisfies that if \(\bar{Z}'\in \Zak{k-1}(F_\omega)\) is any cycle and \(\bar{T}' \in \Iak{k}(X_\omega)\) is any chain with \(\partial \bar{T}' = \bar{Z}'\) then it contains the canonical filling of \(\bar{Z}'\) in \(\Iak{k}(F_\omega)\) as a piece (cf. \cite[Theorem 1.12 and Definition 6.10]{hks_morse1}). As \(\bar{Z} \in \Zak{k-1}(F_\omega)\) and \(\bar{R}\) is the canonical filling of \(\bar{Z}\) in \(\Iak{k}(F_\omega)\), it is clear that \(\bar{Z}\) is strongly immovable. Hence, cocompact flats which do not bound flat half-spaces give rise to strongly immovable cycles in the Tits boundary.
\end{example} 

\begin{remark}\label{cone_eq}
In the proof of the next lemma we will need the following observation which is contained in the proof of \cite[Theorem 9.5]{kl_hrh} and the comments thereafter: Let \(k\geq 1\) be an integer, \(\bar{Z}\in \Zak{k-1}(\partial_TX)\) and \(h: C_TX \rightarrow C_TX\), \((\lambda,u)\mapsto \lambda u\) the homothety of the Tits cone \(C_TX\). Then \(\Mass(h_\#(\bb{0,1}\times \bar{Z})) = \frac{1}{k}\Mass(\bar{Z})\). 
For clarity, let us give the proof: By Theorem \ref{cone_ineq}, \(\Mass(h_\#(\bb{0,1}\times \bar{T})) \leq \frac{1}{k}\Mass(\bar{T})\). On the other hand, as 
\[
\Mass(\langle h_\#(\bb{0,1}\times \bar{T}),(d_T)_o,r\rangle) = r^{k-1}\Mass(\bar{T}),
\]
since the homothety \(u\mapsto r u\) takes \(\bar{T}\) to \(\langle h_\#(\bb{0,1}\times \bar{T}),(d_T)_o,r \rangle\), it holds that
\[
\Mass(h_\#(\bb{0,1}\times \bar{T}))\geq \int_0^1 r^{k-1}\cdot \Mass(\bar{T})\: dr = \frac{1}{k}\Mass(\bar{T})
\]
which finishes the proof.
\end{remark}

\begin{lemma}
Let \(k\geq 1\) be an integer and \(\bar{Z}\in \Zak{k-1}(\partial_TX)\) a cycle which is immovable in the strong sense. Then \(\bar{Z}\) is immovable.
\end{lemma}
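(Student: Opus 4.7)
The plan is to apply the $\bar{V}$-form of strong immovability (equation~\eqref{vcond} in Remark~\ref{str_imm_eqv}) to a filling in $X_\omega$ built out of an arbitrary test chain $T\in\Iak{k}(\partial_TX)$. Specifically, let $C(T):=h_\#(\bb{0,1}\times T)\in\Iak{k+1}(C_TX)$ denote the cone from $o\in C_TX$ over $T$ and set $\bar{V}:=-\iota_\#C(T)\in\Iak{k+1}(X_\omega)$, for some choice of sequence $r_j\nearrow\infty$ and non-principal ultrafilter $\omega$. Using the identity $\partial(\bb{0,1}\times T)=\bb{1}\times T-\bb{0}\times T-\bb{0,1}\times\partial T$ together with $h_1=\id$ and $h_{0\#}T=0$ (recall $k\geq 1$), one computes $\partial C(T)=T-C(\partial T)$. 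Since $\bar{R}=C(\bar{Z})$, this gives $\partial\bar{V}=\iota_\#(C(\partial T)-T)$ and $\iota_\#\bar{R}+\partial\bar{V}=\iota_\#\bigl(C(\bar{Z}+\partial T)-T\bigr)$.

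Because $\iota$ is an isometric embedding and therefore preserves masses, equation~\eqref{vcond} applied to this $\bar{V}$ becomes the identity
\[
\Mass\bigl(C(\bar{Z}+\partial T)-T\bigr)=\Mass(\bar{R})+\Mass\bigl(T-C(\partial T)\bigr)
\]
inside $C_TX$. I would next establish the mass-additivity fact that, for any cycle $Z'\in\Zak{k-1}(\partial_TX)$, the mass measures $\|T\|$ and $\|C(Z')\|$ are mutually singular in $C_TX$, so that $\Mass(T\pm C(Z'))=\Mass(T)+\Mass(C(Z'))$. Applying this to $Z'=\bar{Z}+\partial T$ and $Z'=\partial T$ cancels the $\Mass(T)$ contributions on both sides of the displayed identity and reduces it to $\Mass(C(\bar{Z}+\partial T))=\Mass(\bar{R})+\Mass(C(\partial T))$. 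Substituting $\Mass(C(W))=\tfrac{1}{k}\Mass(W)$ from Remark~\ref{cone_eq} for each of the three cones and multiplying through by $k$ then yields the desired immovability identity $\Mass(\bar{Z}+\partial T)=\Mass(\bar{Z})+\Mass(\partial T)$.

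The main obstacle is verifying the mass-additivity fact. Its content is that $T$ is supported on the unit sphere $\partial_TX\subseteq C_TX$, while $C(Z')$ is a $k$-rectifiable current whose mass measure, by the intrinsic representation of Theorem~\ref{intr_rep}, is absolutely continuous with respect to $\mathcal{H}^k$ on the $k$-rectifiable cone over $\spt(Z')$; since that cone meets $\partial_TX$ only in the $(k-1)$-dimensional set $\spt(Z')$, we get $\|C(Z')\|(\partial_TX)=0$, yielding mutual singularity with $\|T\|$. The general equality $\Mass(S_1+S_2)=\Mass(S_1)+\Mass(S_2)$ for currents with mutually singular mass measures is then standard, obtained by restricting $S_1+S_2$ to the two complementary Borel sets carrying $\|S_1\|$ and $\|S_2\|$ and using that the restriction operation preserves masses on each piece.
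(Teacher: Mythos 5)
Your proposal is correct and follows essentially the same route as the paper: you invoke the equivalent $\bar V$-form of strong immovability from Remark~\ref{str_imm_eqv} with $\bar V = -\iota_\#C(T)$, whereas the paper applies the filling form directly to the filling $\bar T = C(\bar Z + \partial \bar V) - \bar V$; unwinding your algebra gives exactly the paper's identity $\Mass(C(\partial\bar V)-\bar V)+\Mass(\bar R)=\Mass(C(\bar Z+\partial\bar V)-\bar V)$. The two key ingredients — the cone mass formula $\Mass(C(W))=\tfrac1k\Mass(W)$ from Remark~\ref{cone_eq} and the mutual singularity of $\|C(W)\|$ with a current supported on the unit sphere — are the same in both arguments, with your write-up just spelling out the latter via Theorem~\ref{intr_rep} where the paper states it more tersely.
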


\begin{proof}
Fix an asymptotic cone \(X_\omega \coloneqq \lim_\omega (X, \frac{1}{r_j},p)\) of \(X\) and let \(\iota: C_TX \rightarrow X_\omega\) denote the embedding from Lemma \ref{asympttits}. Let \(h: C_TX \rightarrow C_TX\), \((\lambda,v)\mapsto \lambda v\), denote the homothety of \(C_TX\). Now, take \(\bar{V}\in \Iak{k}(\partial_TX)\). As \(\bar{Z}\) is immovable in the strong sense, it holds that 
\begin{equation}\label{masstrrt}
\Mass(\bar{T}-\bar{R}) + \Mass(\bar{R}) = \Mass(\bar{T})
\end{equation}
where \(\bar{T}\coloneqq h_\#(\bb{0,1}\times (\bar{Z} + \partial \bar{V})) - \bar{V}\). Now, 
\begin{align*}
\Mass(\bar{T}-\bar{R}) &= \Mass(h_\#(\bb{0,1}\times \partial \bar{V} - \bar{V}) = \Mass(h_\#(\bb{0,1}\times \partial \bar{V})) + \Mass(\bar{V})\\
&= \frac{1}{k}\Mass(\partial \bar{V}) + \Mass(\bar{V})
\end{align*}
where the second last step followed from the fact that \(\|h_\#(\bb{0,1}\times \partial \bar{V})\|(S_{C_TX}(o,1)) = 0\) and \(\bar{V}\) is supported on \(S_{C_TX}(o,1)\). The last step followed from Remark \ref{cone_eq}. Similarly, 
\[
\Mass(\bar{T}) = \frac{1}{k}\Mass(\bar{Z}+\partial \bar{V}) + \Mass(\bar{V})
\]
and it also holds that \(\Mass(\bar{R}) = \frac{1}{k}\Mass(\bar{Z})\). Now \eqref{masstrrt} gives that 
\[
\Mass(\bar{Z}) + \Mass(\partial \bar{V}) = \Mass(\bar{Z}+\partial \bar{V})
\]
which finishes the proof.
\end{proof}

\begin{proposition}\label{immmin}
Let \(k\geq 1\) be an integer, \(\bar{Z}\in \Zak{k-1}(\partial_T X)\) an immovable cycle and \(\bar{R}\in \Iak{k}(C_TX)\) the cone over \(\bar{Z}\) from \(o\). Then \(\bar{R}\) is the unique minimal filling of \(\bar{Z}\).
\end{proposition}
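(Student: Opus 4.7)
The plan is to consider an arbitrary filling $\bar{T} \in \Iak{k}(C_TX)$ of $\bar{Z}$, slice it by the distance function $d_o$ from the cone vertex, and push the resulting slices back to $\partial_T X$ via the radial projection $\pi\colon C_TX\setminus\{o\}\to \partial_TX$, $v\mapsto v/d(o,v)$, so that immovability of $\bar{Z}$ can be applied on each sphere $S(o,r)$ with $r \in (0,1)$ to bound the mass of the slices from below.

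Concretely, for a.e.\ $r \in (0,1)$, set $W_r \coloneqq \langle \bar{T}, d_o, r\rangle$. Since $\spt(\bar{Z}) \subseteq \partial_TX = S(o,1)$ does not meet $B(o,r)$, the slice $W_r$ is an integral $(k-1)$-cycle supported in $S(o,r)$, and the truncation $\bar{T}_r \coloneqq \bar{T}\res\{d_o \geq r\}$ lies in $\Iak{k}(C_TX)$ with $\partial \bar{T}_r = \bar{Z} - W_r$. The map $\pi$ is $(1/r)$-Lipschitz on $\{d_o \geq r\}$, and its restriction to $S(o,r)$ is the exact homothety $h_{1/r}$ inverse to $h_r$. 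Hence $\pi_\# \bar{T}_r \in \Iak{k}(\partial_TX)$ satisfies $\partial(\pi_\#\bar{T}_r) = \bar{Z} - \pi_\# W_r$, and applying immovability of $\bar{Z}$ to the chain $-\pi_\# \bar{T}_r$ yields
\[
\Mass(\pi_\# W_r) = \Mass(\bar{Z}) + \Mass(\bar{Z} - \pi_\# W_r) \geq \Mass(\bar{Z}).
\]
Because $\pi|_{S(o,r)}$ is an exact $(1/r)$-rescaling, the push-forward identity $\Mass(W_r) = r^{k-1}\Mass(\pi_\# W_r)$ holds, and therefore $\Mass(W_r) \geq r^{k-1}\Mass(\bar{Z})$. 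Integrating by the coarea inequality for the $1$-Lipschitz function $d_o$,
\[
\Mass(\bar{T}) \geq \int_0^1 \Mass(W_r)\,dr \geq \int_0^1 r^{k-1}\Mass(\bar{Z})\,dr = \tfrac{1}{k}\Mass(\bar{Z}) = \Mass(\bar{R}),
\]
where the last equality is Remark \ref{cone_eq}. This proves that $\bar{R}$ is a minimal filling of $\bar{Z}$.

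For uniqueness, assume $\Mass(\bar{T}) = \Mass(\bar{R})$; then every inequality above is sharp. Equality in immovability forces $\pi_\# W_r = \bar{Z}$ for a.e.\ $r \in (0,1)$, and since $h_r \circ \pi$ is the identity on $S(o,r) \supseteq \spt(W_r)$, pushing forward by $h_r$ yields $W_r = (h_r)_\#\bar{Z} = \langle \bar{R}, d_o, r\rangle$ for a.e.\ $r \in (0,1)$, while $W_r = 0$ for a.e.\ $r \geq 1$. The remaining step is to recover the identity $\bar{T} = \bar{R}$ from the facts that the $d_o$-slices of $\bar{T}$ and $\bar{R}$ agree almost everywhere and that the coarea inequality for $d_o$ is saturated on $\bar{T}$. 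I would handle this using the intrinsic representation (Theorem \ref{intr_rep}): coarea equality forces the tangential Jacobian of $d_o$ on $\set(\bar{T})$ to equal $1$ at $\|\bar{T}\|$-a.e.\ point, so $\nabla d_o$ lies in the approximate tangent plane of $\set(\bar{T})$ almost everywhere; combined with the prescribed slices, this identifies $\bar{T}$ as the cone over $\bar{Z}$, namely $\bar{R}$. This reconstruction from slicing equality is where I expect the main technical obstacle.
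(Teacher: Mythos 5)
Your approach is essentially the same as the paper's: slice by the radial distance from the cone vertex, apply immovability to bound the slice masses from below, and integrate via the coarea inequality, with equality throughout forcing the slices to agree. The stylistic difference is that you truncate $\bar{T}$ at radius $r$ and push it to $\partial_T X$ via the radial projection $\pi$ so that immovability of $\bar{Z}$ itself can be invoked, whereas the paper takes a filling $\bar{V}$ of $\bar{T}-\bar{R}$, slices $\bar{V}$, and applies immovability of the rescaled cycle $\langle\bar{R},(d_T)_o,r\rangle$ directly on $S(o,r)$ (implicitly using that immovability is a bi-Lipschitz-invariant property, which follows from its characterization via Lemma \ref{cancellation} as $\sH^{k-1}(\set(\bar{Z})\cap\set(\partial T))=0$). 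Both versions need the first inequality in the chain to be an equality for $\bar{R}$, i.e.\ $\Mass(\bar{R})=\int_0^1\Mass(\langle\bar{R},d_o,r\rangle)\,dr$, which holds because $\bar{R}$ is a cone; you get this through Remark \ref{cone_eq}, the paper uses it tacitly.

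Two small points of caution. First, your assertion that $\pi|_{S(o,r)}$ is an \emph{exact} $(1/r)$-rescaling and that this yields $\Mass(W_r)=r^{k-1}\Mass(\pi_\# W_r)$ needs a slight correction: the ambient cone metric restricted to $S(o,r)$ is the chordal metric $2r\sin(\angle_T/2)$, not $r\cdot\angle_T$, so $\pi|_{S(o,r)}$ is not a global homothety. What saves the claim is that the \emph{metric differential} of $\pi|_{S(o,r)}$ is a $1/r$-rescaling at every point, and masses of rectifiable currents (via the Kirchheim area formula and the intrinsic representation of Theorem \ref{intr_rep}) transform according to the metric differential, so the equality $\Mass(W_r)=r^{k-1}\Mass(\pi_\# W_r)$ is indeed correct, but for that reason rather than because $\pi|_{S(o,r)}$ is a literal rescaling. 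Second, for uniqueness you correctly arrive at the reduction that the $d_o$-slices of $\bar{T}$ and $\bar{R}$ agree for a.e.\ $r$ and that the coarea inequality on $\bar{T}$ is saturated, and you flag the final reconstruction $\bar{T}=\bar{R}$ as the obstacle; the paper's proof is in fact equally terse at this point, moving from $\partial\langle\bar{V},(d_T)_o,r\rangle=0$ a.e.\ to $\bar{R}=\bar{S}$ without elaboration, and the coarea-equality/tangent-plane argument you sketch (the radial direction must lie in the approximate tangent plane of $\set(\bar{T})$ at $\|\bar{T}\|$-a.e.\ point, so $\bar{T}$ is cone-like and determined by its slices) is the right way to fill it in. So your proposal is correct in outline and identifies the same subtleties the paper leaves implicit.
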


\begin{proof}
Let \(\bar{S} \in \Iak{k}(C_TX)\) be another filling of \(\bar{Z}\) and let \(\bar{V}\) be a filling of \(\bar{S} - \bar{R}\). For every \(r > 0\), \(\langle \bar{R},(d_T)_o,r\rangle \in \Zak{k-1}(\partial_T X)\) is an immovable cycle in \(S(o,r)\) and for almost every \(r > 0\), \(\langle \bar{S},(d_T)_o,r\rangle \in \Iak{k-1}(C_TX)\), \(\langle \bar{V},(d_T)_o,r\rangle \in \Iak{k}(C_TX)\) and then \(\langle \bar{S},(d_T)_o,r\rangle - \langle \bar{R},(d_T)_o,r\rangle = \partial\langle \bar{V},(d_T)_o,r\rangle\) and as \(\langle \bar{R},(d_T)_o,r\rangle\) is an immovable cycle, we have
\[
\Mass(\langle \bar{S},(d_T)_o,r\rangle) = \Mass(\langle \bar{R},(d_T)_o,r\rangle) + \Mass(\partial \langle \bar{V},(d_T)_o,r\rangle) \geq \Mass(\langle \bar{R},(d_T)_o,r\rangle).
\]
It follows from the coarea inequality that
\[
\Mass(\bar{R}) = \int_0^\infty \Mass(\langle \bar{R},(d_T)_o,r\rangle)\: dr \leq \int_0^\infty \Mass(\langle \bar{S},(d_T)_o,r\rangle)\: dr \leq \Mass(\bar{S}).
\]
Now, if \(\Mass(\bar{R}) = \Mass(\bar{S})\) then \(\partial\langle \bar{V},(d_T)_o,r\rangle = 0\) for almost every \(r\) so \(\langle \bar{S},(d_T)_o,r\rangle = \langle \bar{R},(d_T)_o,r\rangle\) for almost every \(r\) and hence \(\bar{R} = \bar{S}\).
\end{proof}

As a corollary, we get the following.

\begin{corollary}\label{immcpt}
Let \(k\geq 1\) be an integer. For every \(m,\varepsilon > 0\) there exists an integer \(N > 0\) such that if \(\bar{Z}\in \Zak{k-1}(\partial_TX)\) is an immovable cycle with \(\Mass(\bar{Z}) \leq m\) and \(\sN\) is an \(\varepsilon\)-separated set in \(\spt(\bar{Z})\), then \(|\sN|\leq N\). In particular, immovable cycles are compactly supported.
\end{corollary}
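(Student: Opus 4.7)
The plan is to apply the lower density bound of Corollary \ref{lowerbound} to the cone $\bar{R} := h_\#(\bb{0,1}\times \bar{Z}) \in \Iak{k}(C_TX)$, which is minimizing by Proposition \ref{immmin}, on a family of pairwise disjoint balls centered at midpoints of the cone rays. Concretely, for each $u \in \spt(\bar{Z}) \subseteq \partial_T X$ I set $x_u := \tfrac{1}{2} u \in C_TX$. Since $\bar{R}$ is a cone from $o$ over $\bar{Z}$, slicing by $(d_T)_o$ at radius $1/2$ yields the rescaled cycle $\tfrac{1}{2}\bar{Z}$ and in particular identifies $x_u$ as a point of $\spt(\bar{R})$. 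Moreover $\spt(\partial\bar{R}) = \spt(\bar{Z})$ lies on the unit sphere around $o$, so the reverse triangle inequality gives $d(x_u,\spt(\partial\bar{R})) \geq 1/2$.

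By Corollary \ref{lowerbound} this yields $\|\bar{R}\|(B(x_u, r)) \geq \theta_k r^k$ for every $0 \leq r \leq 1/2$, while Remark \ref{cone_eq} provides $\Mass(\bar{R}) = \tfrac{1}{k}\Mass(\bar{Z}) \leq m/k$. Since $C_TX$ is the Euclidean cone over the angular sphere, the cone law of cosines shows that $d_T(u,v) \geq \varepsilon$ forces
\[
d(x_u, x_v) \;\geq\; \sin\!\bigl(\tfrac{1}{2}\min(\varepsilon,\pi)\bigr) \;=:\; s(\varepsilon) \;>\; 0.
\]
Choosing $\delta := \min(s(\varepsilon)/2,\,1/2)$ makes the balls $\{B(x_u,\delta)\}_{u\in\sN}$ pairwise disjoint for any $\varepsilon$-separated set $\sN \subseteq \spt(\bar{Z})$, and each has $\|\bar{R}\|$-mass at least $\theta_k \delta^k$. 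Summing,
\[
|\sN|\cdot \theta_k\, \delta^k \;\leq\; \Mass(\bar{R}) \;\leq\; \tfrac{m}{k},
\]
so one may take $N$ to be any integer exceeding $m/(k\theta_k\delta^k)$.

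For the ``in particular'' statement: $\spt(\bar{Z})$ is closed in the complete metric space $\partial_T X$ (complete as a closed subset of the Hadamard space $C_TX$), hence itself complete. The finite bound on $\varepsilon$-separated sets just established means $\spt(\bar{Z})$ is totally bounded, hence compact.

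The main obstacle is ensuring that ``minimizing'' in the sense of Corollary \ref{lowerbound} is correctly extracted from the immovability of $\bar{Z}$; this is exactly the content of Proposition \ref{immmin}, which says $\bar{R}$ is the minimal filling of its own boundary. Once this is in hand, the rest is a standard volume-packing estimate against the lower density bound, combined with the elementary cone-metric calculation for the separation of the midpoints $x_u$.
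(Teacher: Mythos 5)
Your proof is correct and follows essentially the same approach as the paper: use Proposition \ref{immmin} to deduce that the cone \(\bar{R}\) is minimizing, then apply the lower density bound of Corollary \ref{lowerbound} to pack disjoint balls around the midpoints \(\tfrac{1}{2}u\) against the mass bound \(\Mass(\bar{R}) = \tfrac{1}{k}\Mass(\bar{Z})\). The paper obtains the midpoint separation more directly from the homogeneity of the cone metric \(d_T\) under dilation (so that \(\tfrac{1}{2}\sN\) is simply \(\tfrac{\varepsilon}{2}\)-separated), avoiding your law-of-cosines detour, but the two computations lead to the same packing estimate.
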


\begin{proof}
Let \(\bar{Z} \in \Zak{k-1}(\partial_TX)\) be immovable with \(\Mass(\bar{Z}) \leq m\) and \(\bar{R}\in \Iak{k}(C_TX)\) be the cone over it from \(o\). By Lemma \ref{immmin}, \(\bar{R}\) is absolutely minimizing so for every \(x\in \spt(\bar{R})\) and \(0 \leq r < d_T(x,\spt(\bar{Z}))\) it holds that \(\|\bar{R}\|(B(x,r))\geq \theta_k \cdot r^k\). Now, let \(\sN\) be an \(\varepsilon\)-separated set of points in \(\spt(\bar{Z})\). Then \(\frac{1}{2}\sN\) is an \(\frac{\varepsilon}{2}\)-separated set of points in \(\spt(\bar{R})\) so 
\[
\frac{m}{k} \geq \frac{\Mass(\bar{Z})}{k} = \Mass(\bar{R}) \geq \sum_{\xi\in\sN} \|\bar{R}\|(B(\frac{1}{2}\xi,\varepsilon/4)) \geq |\sN|\cdot \theta_k \cdot (\varepsilon'/4)^k
\]
where \(\varepsilon' \coloneqq \min\{1,\varepsilon\}\). This gives an upper bound on \(|\sN|\) depending only on \(m,\,k,\,\varepsilon\) which finishes the proof.
\end{proof}

The following corollary will be important:

\begin{corollary}\label{uni_cpt}
For every integer \(k\geq 1\) and numbers \(m,\varepsilon > 0\) there exists an integer \(N > 0\) such that the following holds: Let \(\bar{Z}\in \Zak{k-1}(\partial_TX)\) be an immovable cycle with \(\Mass(\bar{Z}) \leq m\), \(p\in X\) a point and \(R\in \Zloc{k}(X)\) the cone over \(\bar{Z}\) from \(p\). For every \(r > 0\) it holds that that if \(\sN\) is an \(\varepsilon r\)-separated set of points in \(\spt(R)\cap B(p,r)\) then \(|\sN| \leq N\).
\end{corollary}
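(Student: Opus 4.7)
The strategy is to reduce the bound to a statement about $\varepsilon$-separated subsets of the support of the cone $\bar{R}\in \Iak{k}(C_TX)$ from $o$ over $\bar{Z}$, which has no dependence on $r$, and then to estimate such subsets by combining the preceding Corollary \ref{immcpt} with the monotonicity formula applied to $\bar{R}$ --- which is absolutely area minimizing by Proposition \ref{immmin}.

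The first step is the lifting. By Remark \ref{cone_eq} and Corollary \ref{immcpt}, $\Mass(\bar{R}) = \tfrac{1}{k}\Mass(\bar{Z}) \leq m/k$ and $\spt(\bar{R})$ is compact and contained in $B_{C_TX}(o,1)$. Let $\pi_s \coloneqq \can_p \circ h_s: C_TX \to X$, where $h_s(u) = su$ is the $s$-homothety of $C_TX$; since $h_s$ is $s$-Lipschitz and $\can_p$ is $1$-Lipschitz, $\pi_s$ is $s$-Lipschitz, and by definition of $R$ as the cone from $p$ over $\bar{Z}$ one has $\pi_{s\#}\bar{R} = R\res B(p,s)$. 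I would apply this with $s = 2r$: given $x \in \mathcal{N} \subseteq \spt(R) \cap B(p,r)$, small balls about $x$ sit inside $B(p,2r)$, hence $x \in \spt(R\res B(p,2r)) \subseteq \pi_{2r}(\spt(\bar{R}))$ (the image is closed because $\spt(\bar{R})$ is compact), and one obtains a lift $u_x \in \spt(\bar{R})$ with $\pi_{2r}(u_x) = x$. The $(2r)$-Lipschitz property of $\pi_{2r}$ turns an $\varepsilon r$-separated set into an $\varepsilon/2$-separated set $\tilde{\mathcal{N}} \coloneqq \{u_x : x \in \mathcal{N}\} \subseteq \spt(\bar{R}) \cap B_{C_TX}(o,1)$, and $|\tilde{\mathcal{N}}| \geq |\mathcal{N}|$ since $\pi_{2r}(u_x) = x$ determines $x$ from $u_x$.

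For the second step I would split $\tilde{\mathcal{N}} = \tilde{\mathcal{N}}_0 \sqcup \tilde{\mathcal{N}}_1$ according to whether $d_{C_TX}(u, \spt(\bar{Z})) < \varepsilon/8$ or $\geq \varepsilon/8$. For $u \in \tilde{\mathcal{N}}_0$, picking a point $v_u \in \spt(\bar{Z})$ with $d(u,v_u) < \varepsilon/8$ gives, by the triangle inequality, an $\varepsilon/4$-separated subset $\{v_u : u \in \tilde{\mathcal{N}}_0\}$ of $\spt(\bar{Z})$, whose cardinality is bounded by Corollary \ref{immcpt}. For $u \in \tilde{\mathcal{N}}_1$ we have $d(u, \spt(\partial \bar{R})) \geq \varepsilon/8$, so the monotonicity lower bound (Corollary \ref{lowerbound}) applied to the minimizer $\bar{R}$ yields $\|\bar{R}\|(B(u, \varepsilon/8)) \geq \theta_k (\varepsilon/8)^k$; these balls are pairwise disjoint by the $\varepsilon/2$-separation of $\tilde{\mathcal{N}}$, so $|\tilde{\mathcal{N}}_1|\cdot \theta_k (\varepsilon/8)^k \leq \Mass(\bar{R}) \leq m/k$, which gives the desired bound.

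The only delicate point is ensuring that the lift $u_x$ is available for every $x$ in the closed ball $B(p,r)$, including its boundary sphere; using the radius $2r$ rather than $r$ arranges that arbitrarily small neighborhoods of $x$ lie inside $B(p,2r)$ and the inclusion $\spt(\pi_{2r\#}\bar{R}) \subseteq \pi_{2r}(\spt(\bar{R}))$ applies directly. The rest is a routine combination of the two previously established estimates.
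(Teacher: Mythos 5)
Your proof is correct and takes essentially the paper's route: lift the \(\varepsilon r\)-separated set through the \(r\)-Lipschitz map \(u\mapsto \can_p(ru)\) to a separated set in \(\spt(\bar{R})\subseteq C_TX\), independent of \(r\), and then bound its cardinality there. Your use of \(\pi_{2r}\) in place of \(\pi_r\) so that the lift is guaranteed to exist for points on the boundary sphere \(S(p,r)\) is a valid and careful precaution. Where the paper finishes by asserting that the bound ``follows from Corollary \ref{immcpt}'' --- which, as literally stated, bounds \(\varepsilon\)-separated sets in \(\spt(\bar{Z})\subseteq\partial_TX\), not in \(\spt(\bar{R})\) --- your near/far split relative to \(\spt(\bar{Z})\) is a clean way to supply the missing step. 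A slightly shorter route, mirroring what the proof of Corollary \ref{immcpt} itself does, is to rescale the lifted set by the homothety \(h_{1/2}\) of \(C_TX\): on the Euclidean cone this is a \(\tfrac12\)-similitude, so separation is preserved up to the factor \(\tfrac12\), and every rescaled point lies in \(B_{C_TX}(o,\tfrac12)\), hence at distance at least \(\tfrac12\) from \(\spt(\bar{Z})\subseteq S_{C_TX}(o,1)\); the lower density bound of Corollary \ref{lowerbound} then applies uniformly to all rescaled points, and no case split is needed.
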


\begin{proof}
Let \(\bar{Z}\in\Zak{k-1}(X)\) be an immovable cycle with \(\Mass(\bar{Z})\leq m\) and \(\bar{R} \in \Iak{k}(C_TX)\) the cone over \(\bar{Z}\) from \(o\). The map \(\pi_r: C_TX \rightarrow X\), \(\pi_r(u)\coloneqq \can_p(ru)\) is \(r\)-Lipschitz and surjective from \(\spt(\bar{R})\) onto \(\spt(R)\cap B(p,r)\). Hence if \(\sN\) is an \(\varepsilon r\)-separated set of points in \(\spt(R)\cap B(p,r)\), then there exists an \(\varepsilon\)-separated set \(\bar{\sN}\) of points in \(\spt(\bar{R})\) with \(\pi_r(\bar{\sN}) = \sN\). Now the result follows from Corollary \ref{immcpt}.
\end{proof}

% We also get the following corollary:

% \begin{corollary}\label{minmass}
% For every integer \(k\geq 0\) there is a number \(\bar{m}_k > 0\) such that if \(\bar{Z} \in \Zak{k}(\partial_TX)\) is immovable and \(\bar{Z} \neq 0\) then \(\Mass(\bar{Z}) \geq \bar{m}_k\).
% \end{corollary}

% \begin{proof}
% Assume that \(\bar{Z}\in \Zak{k}(\partial_TX)\) is immovable and non-zero. Let \(\bar{R}\in \Iak{k+1}(C_TX)\) denote the cone over \(\bar{Z}\) from \(o\) in \(C_TX\). Now, \(o \in \spt(\bar{R})\) so by Corollary \ref{lowerbound} and Lemma \ref{immmin}, \(\Mass(\bar{Z}) = (k+1)\cdot \Mass(\bar{R})\geq (k+1)\cdot\theta_{k+1}\). 
% \end{proof}

Now we turn to showing that the immovable cycles and strongly immovable cycles form groups. For that we need the following lemma.

\begin{lemma}\label{cancellation}
Let \(k\geq 0\) be an integer, \(Y\) a complete metric space and \(S,T\in \mathcal{R}_k(Y)\). If \(\sH^k(\set(S)\cap \set(T)) = 0\) then \(\Mass(S+T) = \Mass(S) + \Mass(T)\) and if \(\sH^k(\set(S)\cap \set(T)) > 0\) then \(\Mass(S+T) < \Mass(S) + \Mass(T)\) or \(\Mass(S-T) < \Mass(S) + \Mass(T)\).
\end{lemma}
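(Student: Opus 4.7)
The plan is to exploit the intrinsic representation of Theorem \ref{intr_rep}, together with the mass formula $\|R\|=\theta\lambda\sH^k\res\set(R)$, to compute $\Mass(S+T)$ and $\Mass(S-T)$ explicitly. Write $E\coloneqq\set(S)\cap\set(T)$, and let $\theta_S,\theta_T$ and $\tau_S,\tau_T$ denote the densities and measurable orientations afforded by that theorem.

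If $\sH^k(E)=0$, then both $\|S\|$ and $\|T\|$ vanish on $E$, since they are absolutely continuous with respect to $\sH^k$ and concentrated on $\set(S)$ and $\set(T)$ respectively. Consequently $T\res(\set(S)\ssm E)=0$ and $S\res(\set(T)\ssm E)=0$, using that a current of zero mass is the zero current. Hence the restrictions of $S+T$ to the disjoint Borel sets $\set(S)\ssm E$ and $\set(T)\ssm E$ recover $S$ and $T$ respectively, and additivity of mass on disjoint restrictions combined with the triangle inequality gives $\Mass(S+T)=\Mass(S)+\Mass(T)$.

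Now suppose $\sH^k(E)>0$. The key geometric ingredient is the Besicovitch--Federer coincidence principle: at $\sH^k$-a.e.\ point of the intersection of two $\sH^k$-rectifiable subsets of a complete metric space, the approximate tangent planes coincide. Applied to $\set(S),\set(T)$, this yields $T_x\set(S)=T_x\set(T)\eqqcolon V(x)$ and $\lambda_S(x)=\lambda_T(x)\eqqcolon\lambda(x)$ on $E$ up to an $\sH^k$-null set; both $\tau_S(x),\tau_T(x)$ orient $V(x)$, so we may write $\tau_T=\varepsilon\tau_S$ for a Borel sign $\varepsilon:E\to\{\pm 1\}$. Set $E_\pm\coloneqq\{\varepsilon=\pm 1\}$. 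Substituting into the intrinsic representations, the integrand defining $(S+T)(f,\pi_1,\ldots,\pi_k)$ on $E_\pm$ equals $(\theta_S\pm\theta_T)\langle\Alt_k D\pi|_{V(x)},\tau_S(x)\rangle$, while on $\set(S)\ssm E$ and $\set(T)\ssm E$ the representations of $S$ and $T$ are unchanged. After repackaging $S+T$ in the form of Theorem \ref{intr_rep} and applying the mass formula $\|R\|=\theta\lambda\sH^k\res\set(R)$ separately on each of the four pieces $\set(S)\ssm E$, $\set(T)\ssm E$, $E_+$ and $E_-$, a term-by-term comparison with the analogous decomposition of $\Mass(S)+\Mass(T)$ yields
\[
\Mass(S)+\Mass(T)-\Mass(S+T)=\int_{E_-}2\min(\theta_S,\theta_T)\,\lambda\, d\sH^k
\]
and, by the same argument applied to $S-T$ (for which the roles of $E_+$ and $E_-$ swap),
\[
\Mass(S)+\Mass(T)-\Mass(S-T)=\int_{E_+}2\min(\theta_S,\theta_T)\,\lambda\, d\sH^k.
\]
Since $\sH^k(E_+)+\sH^k(E_-)=\sH^k(E)>0$ and the integrand is strictly positive on $E$, at least one of the two right-hand sides is strictly positive, and the strict inequality claimed in the lemma follows.

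The main obstacle is the rigorous passage from the pointwise integrand to a mass identity: one must upgrade the statement that ``the integrand of $S+T$ on $E_\pm$ is $(\theta_S\pm\theta_T)\langle\Alt_k D\pi|_V,\tau_S\rangle$'' to the measure identity $\|S+T\|\res E_\pm=|\theta_S\pm\theta_T|\,\lambda\,\sH^k\res E_\pm$. The natural route is to first use inequality \eqref{area_factor_ineq}, applied to $S+T$, to show that $\|S+T\|$ is absolutely continuous with respect to $\sH^k\res(\set(S)\cup\set(T))$, then to compute its Radon--Nikodym derivative on each of the four pieces from the pointwise integrand above, and finally to invoke the formula $\|R\|=\theta\lambda\sH^k\res\set(R)$ one last time to read off each contribution to the total mass.
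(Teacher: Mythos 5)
Your proposal is correct and follows essentially the same route as the paper: both arguments invoke the intrinsic representation, use that the approximate tangent planes of $\set(S)$ and $\set(T)$ agree $\sH^k$-a.e.\ on the intersection, and then analyse the sign of the relative orientation there. The one place you make more work for yourself is in insisting on the exact identity $\|S\pm T\|\res E_\mp = |\theta_S \mp \theta_T|\lambda\,\sH^k\res E_\mp$: the paper sidesteps the obstacle you flag at the end by only using the one-sided bound $\|S+T\|\le|\theta_S+\tilde\theta_T|\lambda\,\sH^k$ coming from \eqref{area_factor_ineq}, which already forces $\Mass(S+T)-\Mass(S)-\Mass(T)\le -\int_{E_-}2\min(\theta_S,\theta_T)\lambda\,d\sH^k$ (and similarly for $S-T$ and $E_+$), and that inequality alone gives the strict decrease.
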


\begin{proof}
Assume first that \(\sH^k(\set(S)\cap \set(T)) = 0\). Using that \(\spt(S)\ssm \set(S)\) and \(\spt(T)\ssm \set(T)\) are \(\sH^k\)-negligible, we know that \(\sH^k(\spt(S)\cap \spt(T)) = 0\) so we get
\begin{align*}
\Mass(S+T) &= \|S+T\|(Y\ssm \spt(T)) + \|S+T\|(Y\ssm \spt(S))\\
&+ \|S+T\|(\spt(S)\cap \spt(T))\\
&\geq \|S\|(Y\ssm \spt(T)) - \|T\|(Y\ssm \spt(T))\\
&-\|S\|(Y\ssm \spt(S)) + \|T\|(Y\ssm \spt(S))\\
&= \Mass(S) + \Mass(T).
\end{align*}
and hence is \(\Mass(S+T) = \Mass(S) + \Mass(T)\).\\

By Theorem \ref{intr_rep}, there exist Borel functions \(\theta_S: \set(S)\rightarrow [0,\infty)\), \(\theta_T: \set(T)\rightarrow [0,\infty)\) and measurable orientations \(\tau_S,\,\tau_T\) on \(\set(S),\,\set(T)\), respectively, such that 
\[
S(f,\pi_1,\ldots,\pi_k) = \int_{\set(S)} f(x)\theta_S(x) \langle \Alt_k (D\pi(x)|_{T_xS}),\tau_S\rangle \: d\sH^k(x)
\]
and
\[
T(f,\pi_1,\ldots,\pi_k) = \int_{\set(T)} f(x)\theta_T(x) \langle \Alt_k (D\pi(x)|_{T_xT}),\tau_T\rangle \: d\sH^k(x)
\]
for all Lipschitz functions \(f\), \(\pi_1,\ldots,\pi_k\) with \(f\) bounded. We extend the functions \(\theta_S,\,\theta_T\) to \(\set(S)\cup \set(T)\) by letting \(\theta_S(x) = 0\) for \(x\notin \set(S)\) and \(\theta_T(x) = 0\) for \(x\notin \set(T)\). We define an orientation on \(\set(S)\cup \set(T)\) by letting \(\tau(x) = \tau_S(x)\) for \(x\in \set(S)\) and \(\tau(x) = \tau_T(x)\) for \(x\in \set(T)\ssm \set(S)\). As for \(\sH^k\)-a.e. \(x \in \set(S)\cap \set(T)\), the tangent spaces to \(S\) and \(T\) at \(x\) agree, it holds that
\[
T(f,\pi_1,\ldots,\pi_k) = \int_{\set(T)} f(x)\Tilde{\theta}_T(x) \langle \Alt_k (D\pi(x)_{T_xT}),\tau\rangle \: d\sH^k(x)
\]
where \(\Tilde{\theta}_T(x) = -\theta_T(x)\) if \(x\in \set(S)\cap\set(T)\) and \(\tau_S(x) = -\tau_T(x)\) and \(\Tilde{\theta}_T(x) = \theta_T(x)\) otherwise. Now, let \(f,\pi_1,\ldots,\pi_k\) be Lipschitz functions, \(f\) bounded. Then 
\[
(S+T)(f,\pi_1,\ldots,\pi_k) = \int_{\set(S)\cup \set(T)} f(x)(\theta_S(x)+\Tilde{\theta}_T(x))\langle \Alt_k (D\pi(x)|_{T_xS}), \tau\rangle\: d\sH^k(x)
\]
so by \eqref{area_factor_ineq},
\[
\begin{split}
&|(S+T)(f,\pi_1,\ldots,\pi_k)| \\
&\quad\leq \int_{\set(S)\cup \set(T)} |f(x)|\cdot |\theta_S(x)+\Tilde{\theta}_T(x)|\cdot \lambda(x)\: d\sH^k(x) \cdot \prod_{i=1}^k \Lip(\pi_i)
\end{split}
\]
where \(\lambda(x)\) is defined as the area factor of the tangent space to \(\set(S)\cup \set(T)\) when it exists, which is for \(\sH^k\)-a.e. \(x\in \set(S)\cup \set(T)\). This shows that \(\|S+T\| \leq |\theta_S + \Tilde{\theta}_T|\lambda \sH^k\). Note that, in exactly the same way, we get for \(-T\) that \(\|S-T\| \leq |\theta_S + \Tilde{\theta}_{-T}|\lambda \sH^k\) where \(\Tilde{\theta}_{-T}(x) = -\Tilde{\theta}_T(x)\) for \(x\in \set(S)\cap \set(T)\) and \(\Tilde{\theta}_{-T}(x) =\Tilde{\theta}_T(x)\) else. Now, if \(\Mass(S+T) = \Mass(S) + \Mass(T)\) then \(|\theta_S(x) + \theta_T(x)| = |\theta_S(x) + \Tilde{\theta}_T(x)|\) for \(\sH^k\)-a.e. \(x\) which again means that \(\theta_T(x) > 0\) for \(\sH^k\)-a.e. \(x\in \set(S)\cup \set(T)\). But then \(|\theta_S + \Tilde{\theta}_{-T}| < |\theta_S + \theta_T|\) on a set of positive \(\sH^k\)-measure so \(\Mass(S-T) < \Mass(S) + \Mass(T)\). This finishes the proof.
\end{proof}

Now it is easy to prove the following.

\begin{proposition}\label{str_imm_subgrp}
For every integer \(k\geq 1\), the set of immovable cycles and the set of strongly immovable cycles in \(\Zak{k-1}(\partial_TX)\) are subgroups.
\end{proposition}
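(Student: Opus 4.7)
The plan is to recast immovability (and strong immovability) as a measure-theoretic condition on the intersection of certain $\set(\cdot)$'s, after which the subgroup property falls out from the elementary behaviour of $\set$ under addition of rectifiable currents.

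Closure under negation is immediate in both cases: replacing $T$ by $-T$ in the defining identity for $Z$ yields the identity for $-Z$, and since $\Iak{k}(\partial_T X)$ is a group this covers every test chain. The same substitution works verbatim in each asymptotic cone, once we note that the cone from $o$ over $-\bar Z$ is $-\bar R$ by linearity of the $\bb{0,1}\times (\cdot)$ construction.

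The heart of the argument is the following characterization via Lemma \ref{cancellation}. Both $Z\in\Zak{k-1}(\partial_TX)$ and $\partial V$ for $V\in\Iak{k}(\partial_TX)$ are rectifiable of dimension $k-1$ (the latter by the boundary rectifiability theorem). If $\sH^{k-1}(\set(Z)\cap\set(\partial V))>0$, then Lemma \ref{cancellation} produces strict inequality either in $\Mass(Z+\partial V)\leq\Mass(Z)+\Mass(\partial V)$ or in $\Mass(Z-\partial V)\leq\Mass(Z)+\Mass(\partial V)$; since $-V\in\Iak{k}(\partial_TX)$, either alternative contradicts immovability of $Z$. Combined with the easy direction of Lemma \ref{cancellation}, this shows that $Z$ is immovable if and only if $\sH^{k-1}(\set(Z)\cap\set(\partial V))=0$ for every $V\in\Iak{k}(\partial_TX)$. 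Using the equivalent formulation from Remark \ref{str_imm_eqv}, the same reasoning one dimension higher inside an asymptotic cone shows that $\bar Z$ is strongly immovable if and only if $\sH^{k}(\set(\iota_\#\bar R)\cap\set(\partial\bar V))=0$ for every $\bar V\in\Iak{k+1}(X_\omega)$, for every asymptotic cone $X_\omega$ and every associated embedding $\iota$.

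Closure under addition is then a short measure-theoretic observation. Since $\|S_1+S_2\|\leq\|S_1\|+\|S_2\|$ we have $\spt(S_1+S_2)\subseteq\spt(S_1)\cup\spt(S_2)$, and since $\sH^j(\spt(S)\setminus\set(S))=0$ for any rectifiable $j$-current $S$, we obtain $\sH^j\bigl(\set(S_1+S_2)\setminus(\set(S_1)\cup\set(S_2))\bigr)=0$. Applied to immovable $Z_1,Z_2$ and any $V\in\Iak{k}(\partial_TX)$, this yields
\[
\sH^{k-1}\bigl(\set(Z_1+Z_2)\cap\set(\partial V)\bigr)\leq\sum_{i=1}^{2}\sH^{k-1}\bigl(\set(Z_i)\cap\set(\partial V)\bigr)=0,
\]
so $Z_1+Z_2$ is immovable. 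The strongly immovable case is identical: the cone construction and $\iota_\#$ are both linear, so the lift associated to $\bar Z_1+\bar Z_2$ is $\iota_\#\bar R_1+\iota_\#\bar R_2$, and the same estimate with $\sH^k$ in place of $\sH^{k-1}$ finishes the proof. I do not anticipate a genuine obstacle; the only care-points are handling both signs of $V$ when applying Lemma \ref{cancellation}, and noting that the inclusion of $\set$'s holds only modulo an $\sH^j$-null set (which is harmless since the characterization is already stated in terms of $\sH^j$).
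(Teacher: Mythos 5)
Your proposal is correct and follows essentially the same route as the paper: both reduce the subgroup property to Lemma \ref{cancellation} combined with the (a.e.) inclusion $\set(S_1+S_2)\subseteq\set(S_1)\cup\set(S_2)$, in $\partial_TX$ for immovability and in each asymptotic cone via Remark \ref{str_imm_eqv} for strong immovability. You package the argument as a clean iff-characterization of (strong) immovability in terms of $\sH$-null intersections of sets and then verify closure directly, while the paper runs the same ideas as a proof by contradiction; your explicit handling of the null-set caveat in the $\set$-inclusion and the closure under negation are small welcome precisions, not a different method.
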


\begin{proof}
Let \(\bar{Z}_1,\bar{Z}_2 \in \Zak{k-1}(\partial_TX)\) be immovable cycles. If \(\bar{Z}_1+\bar{Z}_2\) is not immovable, then there exists \(\bar{T}\in\Iak{k}(\partial_TX)\) such that 
\[
\Mass(\bar{Z}_1+\bar{Z}_2 + \partial \bar{T}) < \Mass(\bar{Z}_1 + \bar{Z}_2) + \Mass(\bar{T})
\]
so by Lemma \ref{cancellation}, \(\sH^{k-1}(\set(\bar{Z}_1+\bar{Z}_2)\cap \set(\partial T)) > 0\) and as \(\set(\bar{Z}_1+\bar{Z}_2) \subseteq \set(\bar{Z}_1)\cup \set(\bar{Z}_2)\), it follows that \(\sH^{k-1}(\set(\bar{Z}_1)\cap \set(\partial T)) > 0\) or \(\sH^{k-1}(\set(\bar{Z}_2)\cap \set(\partial T)) > 0\) and hence, again by Lemma \ref{cancellation}, \(\Mass(\bar{Z}_1+\partial T) < \Mass(\bar{Z}_1) + \Mass(\partial T)\), \(\Mass(\bar{Z}_1 - \partial T) < \Mass(\bar{Z}_1) + \Mass(\partial T)\), \(\Mass(\bar{Z}_2+\partial T) < \Mass(\bar{Z}_2) + \Mass(\partial T)\) or \(\Mass(\bar{Z}_2 - \partial T) < \Mass(\bar{Z}_1) + \Mass(\partial T)\) which contradicts that \(\bar{Z}_1\) and \(\bar{Z}_2\) are immovable.\\

The strongly immovable case is almost identical: Let \(\bar{Z}_1,\bar{Z}_2 \in \Zak{k-1}(\partial_TX)\) be strongly immovable, \(0 < r_j \nearrow \infty\), \(j\in \N\), a sequence, \(p \in X\) a point, \(\omega\) a non-principal ultrafilter on \(\N\), \(X_\omega \coloneqq \lim_\omega (X,\frac{1}{r_j}d,p)\) and \(\iota: C_TX \rightarrow X_\omega\) the isometric embedding from Lemma \ref{asympttits}. Let \(\bar{R}_1,\bar{R}_2 \in \Iak{k}(C_TX)\) be the cones from \(o\in C_TX\) over \(\bar{Z}_1,\bar{Z}_2\), respectively. Now, if \(\bar{Z}_1+\bar{Z}_2\) is not strongly immovable, then there exists \(\bar{V}\in \Iak{k+1}(X_\omega)\) such that 
\[
\Mass(\iota_\#\bar{R}_1 + \iota_\#\bar{R}_2 + \partial \bar{V}) < \Mass(\iota_\#\bar{R}_1 + \iota_\#\bar{R}_2) + \Mass(\partial \bar{V}),
\]
by Remark \ref{str_imm_eqv}. Then by Lemma \ref{cancellation}, \(\sH^k(\set(\iota_\#\bar{R}_1+\iota_\#\bar{R}_2)\cap \set(\bar{V})) > 0\) so \(\sH^k(\set(\iota_\#\bar{R}_1)\cap \set(\bar{V})) > 0\) or \(\sH^k(\set(\iota_\#\bar{R}_2)\cap \set(\bar{V})) > 0\) as \(\set(\bar{R}_1 + \bar{R}_2) \subseteq \set(\bar{R}_1) \cup \set(\bar{R}_2)\). But then, again by Lemma \ref{cancellation}, it follows that \(\Mass(\iota_\#\bar{R}_1 + \partial \bar{V}) < \Mass(\bar{R}_1) + \Mass(\partial \bar{V})\), \(\Mass(\iota_\#\bar{R}_1 - \partial \bar{V}) < \Mass(\bar{R}_1) + \Mass(\partial \bar{V})\), \(\Mass(\iota_\#\bar{R}_2 + \partial \bar{V}) < \Mass(\bar{R}_2) + \Mass(\partial \bar{V})\) or \(\Mass(\iota_\#\bar{R}_2 - \partial \bar{V}) < \Mass(\bar{R}_2) + \Mass(\partial \bar{V})\) which contradicts that \(\bar{Z}_1\) and \(\bar{Z}_2\) are strongly immovable.
\end{proof}

\section{Solving the asymptotic Plateau problem}\label{solving}

Now we turn to showing that the asymptotic Plateau problem admits a solution for cycles in the Tits boundary which are immovable in the strong sense. We will adapt the method of Kleiner and Lang in \cite{kl_hrh}, which they use to solve the asymptotic Plateau problem for top dimensional cycles. A key technical ingredient in their method is Wenger's sub-Euclidean isoperimetric inequality \cite[Theorem 1.2]{wenger_asymptotic}, which allows them to conduct an induction on scales argument to prove \cite[Proposition 4.5]{kl_hrh} from which they can then easily prove existence. We begin by proving the following proposition, which will serve as a substitute for Wenger's sub-Euclidean isoperimetric inequality and allow us to conduct induction on scales. The proof adopts fundamental ideas from the proof of Wenger's sub-Euclidean isoperimetric inequality, in particular the application of Gromov's compactness theorem together with the compactness theorem of Ambrosio and Kirchheim, and then the use of the auxiliary space \(Y\) to relate convergence in the compact space which Gromov's compactness theorem provides, to convergence in the original space – a trick which Wenger originally introduced in the proof of \cite[Theorem 1.3]{wenger_filling}.

\begin{proposition}\label{reliso}
Let \(X\) be a proper Hadamard space, \(k\geq 1\) an integer, \(p\in X\) a point, \(\bar{Z}\in \Zak{k-1}(\partial_TX)\) a cycle which is immovable in the strong sense, \(R\in \Zloc{k}(X)\) the cone from \(p\in X\) over \(\bar{Z}\) and \(\Theta\coloneqq \Theta_\infty(R)\). For every \(C,\varepsilon,a > 0\) there exists \(\bar{r} = \bar{r}(C,\varepsilon,a) > 0\) such that the following holds: Let \(r\geq \bar{r}\), and \(S\in \Iloc{k}(X)\) be a minimal chain with \(\|S\|(B(p,s)) \leq \Theta \cdot s^k\) for every \(s\geq 0\). Assume that \(\langle S,d_p,r\rangle \in \Zak{k-1}(X)\) with \(\Mass(\langle S,d_p,r\rangle) \leq C\cdot r^{k-1}\) and that \(V \in \Iak{k+1}(X)\) is absolutely minimizing with \(\Mass(V)\leq C\cdot r^{k+1}\), \(\spt(S-R - \partial V)\) at distance \(\geq (1+a)\cdot r\) from \(p\), \(\langle V,d_p,r\rangle \in \Iak{k}(X)\) and \(\Mass(\langle V,d_p,r\rangle) \leq C\cdot r^k\). Then there exists \(W\in \Iak{k}(X)\) with \(\spt(S - R - \partial W)\) at distance \(\geq (1-a)\cdot r\) from \(p\) and \(\Mass(W) < \varepsilon \cdot r^{k+1}\).
\end{proposition}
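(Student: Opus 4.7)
My plan is a contradiction-plus-rescaling argument modeled on Wenger's sub-Euclidean isoperimetric proof \cite[Theorem 1.2]{wenger_asymptotic} and Kleiner--Lang's induction on scales \cite[Proposition 4.5]{kl_hrh}, with strong immovability of \(\bar{Z}\) replacing the sub-Euclidean isoperimetric inequality used in the top-dimensional setting. Suppose the conclusion fails; one obtains \(C,\varepsilon,a>0\) and sequences \(r_j\nearrow\infty\), \((S_j,V_j)\) satisfying every hypothesis at scale \(r_j\) but for which no admissible \(W_j\) of mass \(<\varepsilon r_j^{k+1}\) exists. Pass to the rescaled spaces \(X_j:=(X,r_j^{-1}d,p)\); the hypotheses become uniform in \(j\) (density \(\|S_j\|(B(p,s))\leq\Theta s^k\), mass \(\Mass(V_j)\leq C\), identity \(\partial V_j=S_j-R\) on \(B_{X_j}(p,1+a)\), slice masses at radius \(1\) bounded by \(C\), and minimality of \(S_j,V_j\)).

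Fix a non-principal ultrafilter \(\omega\) and form the asymptotic cone \(X_\omega:=\lim_\omega(X,r_j^{-1}d,p)\), with the canonical isometric embedding \(\iota:C_TX\hookrightarrow X_\omega\) of Lemma \ref{asympttits}. Embedding the rescaled balls into a common separable metric space via Kuratowski and applying Ambrosio--Kirchheim compactness together with Wenger's flat-convergence theorem \cite{wenger_flat} (valid because \(X\) satisfies the cone-type inequalities of Theorem \ref{cone_ineq}), one extracts flat subsequential limits \(\bar{S}\), \(\bar{V}\), \(\bar{V}_1\) of \(S_j\res B_{X_j}(p,1)\), \(V_j\res B_{X_j}(p,1)\), and \(\langle V_j,d_p,1\rangle\). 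Proposition \ref{embedding_lemma} identifies their supports with subsets of \(X_\omega\). Since \(R\) is conical and \(\pi_{r_j}\to\iota\) in the ultralimit, the rescaled \(R\) converges to \(\iota_\#\bar{R}\); passing to the limit in \(\partial(V_j\res B(p,1))=(S_j-R)\res B(p,1)+\langle V_j,d_p,1\rangle\) yields \(\partial\bar{V}=(\bar{S}-\iota_\#\bar{R})+\bar{V}_1\) in \(X_\omega\), and the flat-limit-of-minimizers principle makes \(\bar{S}\) and \(\bar{V}\) absolutely minimizing.

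Applying strong immovability in its equivalent form (Remark \ref{str_imm_eqv}) to the chain \(\bar{V}\) gives
\[
\Mass(\iota_\#\bar{R}+\partial\bar{V})=\Mass(\iota_\#\bar{R})+\Mass(\partial\bar{V}).
\]
Since \(\bar{V}_1\) is supported on \(S(p_\omega,1)\) while \(\iota_\#\bar{R}\) has \(\sH^k\)-negligible intersection with that sphere, Lemma \ref{cancellation} separates the \(\bar{V}_1\) contributions on both sides and reduces the identity to \(\Mass(\bar{S})=\Theta+\Mass(\bar{S}-\iota_\#\bar{R})\). Combined with the upper bound \(\Mass(\bar{S})\leq\Theta\) from the density hypothesis and lower semicontinuity of mass, this forces \(\bar{S}=\iota_\#\bar{R}\) on \(B(p_\omega,1)\). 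Using this rigidity, together with a good-radius choice \(\rho_j\in(1-a,1)\) supplied by the coarea inequality applied to the annulus \(\{1-a\leq d_p\leq 1\}\) for \(V_j\), and the minimality of \(V_j\), one constructs for \(\omega\)-almost every \(j\) a chain \(W_j\in\Iak{k+1}(X_j)\) of mass \(<\varepsilon\) with \(\spt(S_j-R-\partial W_j)\subseteq\{d_p\geq 1-a\}\), contradicting failure.

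The main obstacle will be the final extraction of \(W_j\) with mass \emph{strictly} below \(\varepsilon\). The identification \(\bar{S}=\iota_\#\bar{R}\) on \(B(p_\omega,1)\) does not immediately force \(\bar{V}=0\) inside the ball, since \(\bar{V}_1\) is merely shown to be a cycle on \(S(p_\omega,1)\) and not zero; one must exploit minimality of \(\bar{V}\) together with the slack between the inner radius \(1-a\) and the outer radius \(1+a\) allowed in the statement to squeeze the mass of \(V_j\) in \(B_{X_j}(p,\rho_j)\) below \(\varepsilon\) and to assemble the correction from the flat convergence on the inner ball. Handling this bookkeeping -- in particular choosing \(\rho_j\) so that the slice \(\langle V_j,d_p,\rho_j\rangle\) and the minimal filling extracted from the flat convergence combine coherently -- is the main technical hurdle.
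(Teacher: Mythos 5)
Your overall strategy — argue by contradiction, rescale by $r_j^{-1}$, pass to an asymptotic cone, and use strong immovability in place of a sub-Euclidean isoperimetric inequality — is the same as the paper's, and your handling of the strong immovability step (using Remark~\ref{str_imm_eqv}, the limit identity $\partial\bar{V} = (\bar{S}-\iota_\#\bar{R}) + \bar{V}_1$, the density bound $\Mass(\bar{S})\le\Theta$, and a cancellation argument on $S(p_\omega,1)$) is in the spirit of the paper's computation that yields $\bar{S}=\bar{R}$. However there are two genuine gaps.

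First, you invoke ``Kuratowski embedding plus Ambrosio--Kirchheim compactness'' to extract the limits $\bar{S},\bar{V}$, but this does not work directly: after a Kuratowski embedding the ambient space is not compact, and AK compactness requires the pushed-forward currents to have equi-compact supports. The paper's proof establishes precisely this as Claim~\ref{precptclaim}: the rescaled supports $Z_j=(\spt(V_j)\cap B(p,r_j),r_j^{-1}d)$ are uniformly precompact, which then allows Gromov compactness to produce a compact $Z$ in which AK compactness applies. The proof of that claim is nontrivial: it splits a separated net in $\spt(V_j)$ into points far from $\partial V_j$ (controlled by $\Mass(V_j)\le Cr_j^{k+1}$ and the lower density bound of Corollary~\ref{lowerbound} for the minimizer $V_j$), points near $\spt(S_j)$ (controlled by $\|S_j\|(B(p,s))\le\Theta s^k$ together with minimality of $S_j$), and points near $\spt(R)$ (controlled by Corollary~\ref{uni_cpt}, which itself depends on the immovability of $\bar{Z}$). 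You need an argument of this kind, and it is not supplied by any general embedding device.

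Second — and you flag this yourself — the final step of producing $W_j$ with $\Mass(W_j)<\varepsilon r_j^{k+1}$ is the crux, and the route you sketch (good radius $\rho_j$ via coarea plus minimality of $V_j$) does not resolve it. Knowing $\bar{S}=\iota_\#\bar{R}$ is a statement about the limit currents in the abstract space $Z$ (or $X_\omega$); it does not by itself yield small fillings of $(S_j-R)\res B(p,r_j)$ back in $(X,r_j^{-1}d)$. The paper handles this with Wenger's auxiliary-space trick: form $Y=\bigsqcup_j (X,r_j^{-1}d)$ with the blocks at mutual distance $3$, push the currents into $Y$ via $\psi_j$, and show via an Arzel\`a--Ascoli argument that $E_j:=\psi_{j\#}((S_j-R)\res B(p,r_j))\to 0$ weakly in $Y$. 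Since $Y$ satisfies local cone-type inequalities and the masses $\Mass(E_j),\Mass(\partial E_j)$ are uniformly bounded, Wenger's theorem converts weak convergence to flat convergence; one first fills $\partial E_j$ cheaply, uses Corollary~\ref{lowerbound} to keep that filling near the sphere $S(p,r_j)$, and then fills the resulting cycle to obtain $W_j$ with mass $<\varepsilon r_j^{k+1}$ and boundary support at distance $\ge(1-a)r_j$ from $p$. Your coarea/minimality heuristic does not replace this mechanism: in particular restricting or slicing $V_j$ cannot give a chain of mass $<\varepsilon r_j^{k+1}$ since $\Mass(V_j)$ can be as large as $Cr_j^{k+1}$, and the flat smallness of $S_j-R$ on the inner ball must be transferred from the abstract limit space to $X$ before it can be used. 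Absent the $Y$-trick (or an equivalent device), the contradiction is not reached.

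One further minor point: the identity you write after cancellation, $\Mass(\bar{S})=\Theta+\Mass(\bar{S}-\iota_\#\bar{R})$, needs care because $\bar{S}$ is a priori only supported in the \emph{closed} ball $B(p_\omega,1)$, so its intersection with $S(p_\omega,1)$ is not automatically $\sH^k$-null; the paper avoids this by first proving $\bar{S}\res B(p_\omega,r)=\bar{R}\res B(p_\omega,r)$ for all $r<1$, then using $\Mass(\bar{S})\le\Theta=\Mass(\bar{R})$ to kill the contribution from $S(p_\omega,1)$.
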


\begin{proof}
Assume for a contradiction that there exist \(C,a,\varepsilon > 0\) and sequences \(0 < r_j\nearrow \infty\) and \(S_j\in \Iloc{k}(X)\), \(j\in \N\), such that for every \(j\in \N\) the following holds: \(\|S_j\|(B(p,s)) \leq \Theta\cdot s^k\) for every \(s \geq 0\), \(\langle S_j,d_p,r_j\rangle \in \Zak{k-1}(X)\) and \(\Mass(\langle S_j,d_p,r_j\rangle) \leq C\cdot r_j^k\); there is an absolutely minimizing chain \(V_j\in \Iak{k+1}(X)\) with \(\Mass(V_j) \leq C\cdot r_j^{k+1}\), \(\spt(S_j-R-\partial V_j)\) at distance \(\geq (1+a)\cdot r_j\) from \(p\) and \(\Mass(\langle V_j,d_p,r_j\rangle) \leq C\cdot r_j^k\); there exists no \(W\in \Iak{k+1}(X)\) with \(\spt(S_j-R-\partial W)\) at distance \(> (1-a)\cdot r_j\) from \(p\) and \(\Mass(W) < \varepsilon\cdot r_j^{k+1}\).\\

Consider now the spaces \(Z_j \coloneqq (\spt(V_j)\cap B(p,r_j), \frac{1}{r_j}d)\). 
\begin{claim}\label{precptclaim}
The spaces \(Z_j\), \(j\in \N\), are uniformly precompact, i.e. \(\sup_{j\in \N}\diam(Z_j) < \infty\) and for every \(\varepsilon > 0\) there exists an \(N\) such that if \(\sN\) is an \(\varepsilon\)-separated set in \(Z_j\) then \(|\sN|\leq N\).
\end{claim}
\begin{proof}[Proof of Claim \ref{precptclaim}]
It is clear that \(\sup_{j\in \N}\diam(Z_j) < \infty\). Let \(\varepsilon > 0\) and \(\sN\) be an \(\varepsilon\)-separated set in \(Z_j\). Without loss of generality, we assume that \(\varepsilon < \frac{3}{2}\cdot a\). Let 
\begin{align*}
\sN' &\coloneqq \left\{x\in \sN \;\big|\; d(x,\spt(\partial V_j)) > \frac{\varepsilon}{3}\cdot r_j\right\},\\
\sN'' &\coloneqq \left\{x \in \sN \;\big|\; d(x,\spt(S_j)) \leq \frac{\varepsilon}{3}\cdot r_j\right\},\\
\sN''' &\coloneqq \left\{x\in \sN \;\big|\; d(x,\spt(R)) \leq \frac{\varepsilon}{3}\cdot r_j\right\}.
\end{align*}
As \(\sN \subseteq \sN' \cup \sN'' \cup \sN'''\) it suffices to bound the cardinality of each of the sets \(\sN',\,\sN'',\,\sN'''\) independently of \(j\) in order to obtain a bound on \(|\sN|\) independent of \(j\). Now,
\[
C\cdot r_j^{k+1} \geq \|V_j\|(B(p,r_j)) \geq \sum_{x\in \sN'} \|V_j\|(B(x,\frac{\varepsilon}{3}\cdot r_j)) \geq |\sN'|\cdot \theta_{k+1} \cdot \left(\frac{\varepsilon}{3}\cdot r_j\right)^{k+1}
\]
which gives an upper bound on \(|\sN'|\) independent of \(j\). For each point \(x\in \sN''\) we pick a point \(x' \in \spt(S_j)\) with \(d(x,x') \leq \frac{\varepsilon}{3}\cdot r_j\). Then if \(x,y\in \sN''\) are distinct, \(d(x',y') > \frac{\varepsilon}{3}\cdot r_j\). Further, for every \(x\in \sN''\), \(d(p,x') \leq (1+\frac{\varepsilon}{3})\cdot r_j\) so from the assumption that \(\varepsilon < \frac{3}{2}\cdot a\), and the fact that \(d(p,\spt(\partial S_j)) \geq (1+a)\cdot r_j\), it follows that \(d(x',\spt(\partial S_j)) > \frac{\varepsilon}{3}\cdot r_j\). Hence we can estimate:
\begin{align*}
C\cdot (1+a)^k\cdot r_j^k &\geq \|S_j\|(B(p,(1+a)\cdot r_j)) \geq \sum_{x\in \sN''} \|S_j\|(B(x,\frac{\varepsilon}{3}\cdot r_j))\\
&\geq |\sN''|\cdot \theta_k \cdot \left(\frac{\varepsilon}{3}\cdot r_j\right)^k
\end{align*}
which gives an upper bound on \(|\sN''|\) independent of \(j\). Finally, for each \(x\in\sN'''\) we pick \(x'\in \spt(R)\) with \(d(x,x') < \frac{\varepsilon}{3}\cdot r_j\) and obtain an \(\left(\frac{\varepsilon}{3}\cdot r_j\right)\)-separated set in \(\spt(R)\cap B(p,(1+\frac{a}{2})\cdot r_j)\) whose cardinality is bounded independent of \(j\) by Corollary \ref{uni_cpt}.
\end{proof}

Now, by Gromov's compactness theorem, there exists a compact metric space \((Z,d_Z)\) together with isometric embeddings \(\varphi_j:Z_j \rightarrow Z\). For every \(j\in \N\), let \(\iota_j: (X,d)\rightarrow (X,\frac{1}{r_j}d)\) denote the identity map. By the compactness theorem of Ambrosio and Kirchheim \cite[Theorem 5.2]{ambrosio_kirchheim}, we may assume that there are \(\bar{V} \in \Iak{k+1}(Z)\) and \(\bar{S},\bar{R}\in \Iak{k}(Z)\) and that after possibly passing to a subsequence, \((\varphi_j\circ \iota_j)_\# (V_j\res B(p,r_j)) \rightarrow \bar{V}\), \((\varphi_j\circ \iota_j)_\# (S_j\res B(p,r_j)) \rightarrow \bar{S}\) and \((\varphi_j\circ \iota_j)_\# (R\res B(p,r_j)) \rightarrow \bar{R}\) as \(j\rightarrow \infty\). By Proposition \ref{embedding_lemma}, \(\bar{V}\in \Iak{k+1}(X_\omega)\) and \(\bar{S},\bar{R}\in \Zak{k}(X_\omega)\) where \(X_\omega \coloneqq \lim_\omega (X,\frac{1}{r_j}d,p)\) and \(\omega\) is some fixed non-principal ultrafilter on \(\N\).

\begin{claim}\label{lift_asympt}
Using the embedding \(\iota: C_TX \rightarrow X_\omega\) from Lemma \ref{asympttits} to view \(C_TX\) as a subspace of \(X_\omega\), it holds that \(\bar{R}\) is the lift of \(R\) to \(C_TX\).
\end{claim}

\begin{proof}[Proof of Claim \ref{lift_asympt}]
Let \(K\coloneqq [0,1]\cdot \spt(\bar{Z}) \subseteq C_TX\). Note that for every \(r \geq 0\), \(\spt(R)\cap B(p,r) \subseteq \can_p(r K)\). Now, let \([(x_j)_{j\in\N}]\in \lim_\omega(\spt(R)\cap B(p,r_j),\frac{1}{r_j}d,p)\). For every \(j\in \N\), there is \(u_j\in K\) such that \(\can_p(r_ju_j) = x_j\). As \(K\) is compact since \(\spt(\bar{Z})\) is compact, we can define \(u\coloneqq \lim_\omega u_j\). It holds that
\begin{align*}
d_\omega(\iota(u),[(x_j)_{j\in \N}]) &= \lim_\omega \frac{1}{r_j}d(\can_p(r_ju),x_j) = \lim_\omega \frac{1}{r_j} d(\can_p(r_ju),\can_p(r_ju_j))\\
&\leq \lim_\omega \lim_{r\rightarrow \infty}\frac{1}{r}d(\can_p(ru),\can_p(ru_j)) =\lim_\omega d_T(u,u_j)\\
&= 0
\end{align*}
so \(\iota(u) = [(x_j)_{j\in \N}]\) and hence is \(\spt(\bar{R})\subseteq \iota(K) \subseteq \iota(C_TX)\). In the proof of \cite[Theorem 9.3]{kl_hrh}, the lift of \(R\) to \(C_TX\) is obtained by embedding the spaces \((\spt(R)\cap B(p,r),\frac{1}{r}d)\) into a common compact metric space and showing that the push-forwards of the currents \(R\res B(p,r)\), \(r > 0\), where \(R\res B(p,r)\) is viewed as an element of \((X,\frac{1}{r}d)\), converge in the flat topology as \(r\rightarrow \infty\), to a current which is supported on an isometric copy of \(K\subseteq C_TX\). The proof of the claim now follows from \cite[Theorem 1.3]{wenger_compact}.
\end{proof}

From the condition that \(\|S_j\|(B(p,s))\leq \Theta\cdot s^k\) for every \(s \geq 0\), and lower semi-continuity of mass on open sets, it follows that \(\|\bar{S}\|(U(p_\omega,r)) \leq \Theta\cdot r^k\) for every \(0\leq r\leq 1\) and hence also \(\|\bar{S}\|(B(p_\omega,r)) \leq \Theta\cdot r^k\) for all \(0 \leq r < 1\). By lower semi-continuity of mass \(\Mass(\bar{S})\leq \Theta\) so \(\|\bar{S}\|(B(p_\omega,r)) \leq \Theta\cdot r^k\) for all \(0 \leq r \leq 1\). By Theorem \ref{lift}, \(\Mass(\bar{R}) = \Theta\) and since \(\bar{R} \res B(p_\omega,r) = h_{r\#}\bar{R}\) where \(h_r: C_TX \rightarrow C_TX\), \(u\mapsto ru\), it follows that \(\Mass(\bar{R}\res B(p_\omega,r)) = \Theta \cdot r^k\) so \(\Mass(\bar{S}\res B(p_\omega,r)) \leq \Mass(\bar{R}\res B(p_\omega,r))\) for every \(0\leq r\leq 1\). Let \(\bar{T}\coloneqq \partial \bar{V} + \bar{R}-\bar{S}\). Using that \(\bar{T}\) is supported on \(S(p_\omega,1)\) and the strong immovability of \(\bar{Z}\), we get that for almost every \(0 \leq r < 1\) it holds that
\begin{align*}
\|\bar{S}-\bar{R}\|(B(p_\omega,r)) &= \Mass(\bar{S}+\bar{T}-\bar{R})-\|\bar{S}+\bar{T}-\bar{R})\|(B(p_\omega,r)^c)\\
&= \Mass(\bar{S}+\bar{T}) - \|\bar{S}+\bar{T}-\bar{R}\|(B(p_\omega,r)^c) - \Mass(\bar{R})\\
&\leq \Mass(\bar{S}+\bar{T}) - \|\bar{S}+\bar{T}\|(B(p_\omega,r)^c) - \Mass(\bar{R}) + \|\bar{R}\|(B(p_\omega,r)^c)\\
&\leq \|\bar{S}\|(B(p_\omega,r)) - \|\bar{R}\|(B(p_\omega,r))\\
&\leq 0
\end{align*}
so \(\bar{S}\res B(p_\omega,r) = \bar{R}\res B(p_\omega,r)\). It follows that \(\|\bar{S}\|(B(p_\omega,r)) = \Theta\cdot r^k\) for every \(0 \leq r < 1\) and as \(\Mass(\bar{S})\leq \Mass(\bar{R}) = \Theta\) it follows that \(\|\bar{S}\|(S(p_\omega,1)) = 0\) so \(\bar{S} = \bar{R}\).\\

Now we do as Wenger in the proofs of \cite[Theorem 1.2]{wenger_asymptotic} and \cite[Theorem 1.3]{wenger_filling}: Consider the metric space \(Y \coloneqq \bigsqcup_{j\in \N} (X,\frac{1}{r_j}d)\) endowed with the metric 
\[
d_Y(x,y) \coloneqq 
\begin{cases}
\frac{1}{r_j}d(x,y) & \text{if } x,y \in (X,\frac{1}{r_j}d)\\
\frac{1}{r_i}d(x,p) + 3 + \frac{1}{r_j}d(y,p) & \text{if } x\in (X,\frac{1}{r_i}d),\,y\in (X,\frac{1}{r_j}d)
\end{cases}
.
\]
For every \(j\in \N\), let \(\psi_j: (X,d) \rightarrow (X,\frac{1}{r_j}d)\subseteq Y\). We are going to show that \(E_j\coloneqq \psi_{j\#}((S_j-R)\res B(p,r_j)) \rightarrow 0\) as \(j\rightarrow \infty\). So assume for a contradiction that there exists an \(\eta > 0\) together with Lipschitz functions \(f,\pi_1,\ldots,\pi_k\) on \(Y\), \(f\) bounded, such that after possibly throwing out a finite number of elements of the sequence, \(E_j(f,\pi_1,\ldots,\pi_k)\geq \eta\) for every \(j \in \N\). For every \(j\in \N\), define on \(\varphi_j(Z_j)\subseteq Z\) the functions \(f_j\coloneqq f\circ \psi_j\circ (\varphi_j\circ \iota_j)^{-1}\) and \(\pi_{ij}\coloneqq \pi_i \circ \psi_j\circ (\varphi_j\circ \iota_j)^{-1}\), \(i\in \{1,\ldots,k\}\). By McShane's extension theorem there exist extensions \(\hat{f}_j\),\(\hat{\pi}_{ij}\) of \(f\),\(\pi_{ij}\) to \(Z\) with the same Lipschitz constants. By Arzelà-Ascoli, there exist Lipschitz functions \(\hat{f}\),\(\hat{\pi}_i\) such that after possibly passing to a subsequence, \(\hat{f}_j\rightarrow \hat{f}\) and \(\hat{\pi}_{ij}\rightarrow \hat{\pi}_i\) uniformly as \(j\rightarrow \infty\). Write \(F_j\coloneqq(\varphi_j\circ \iota_j)_\#((S_j-R)\res B(p,r_j))\). Now,
\begin{align*}
0 &= (\bar{S}-\bar{R})(\hat{f},\hat{\pi}_1,\ldots,\hat{\pi}_k)\\
&= \lim_{j\rightarrow \infty} F_j(\hat{f},\hat{\pi}_1,\ldots,\hat{\pi}_k)\\
&= \lim_{j\rightarrow \infty} \Big(F_j(\hat{f}_j,\hat{\pi}_{1j},\ldots,\hat{\pi}_{kj}) + F_j(\hat{f}-\hat{f}_j,\hat{\pi}_1,\ldots,\hat{\pi}_k)\\
&\qquad\qquad + F_j(\hat{f}_j,\hat{\pi}_1,\ldots,\hat{\pi}_k) - F_j(\hat{f}_j,\hat{\pi}_{1j},\ldots,\hat{\pi}_{kj})\Big)\\
&\geq \eta - \limsup_{j\rightarrow \infty} \left( \prod_{i=1}^k \Lip(\hat{\pi}_i) \cdot \int_Z |\hat{f}-\hat{f}_j|\: d\|F_j\|\right)\\
& \qquad - \limsup_{j\rightarrow \infty} \left(\sum_{i=1}^k \int_Z |\hat{f}_j|\cdot |\hat{\pi}_i-\hat{\pi}_{ij}|\: d\|\partial F_j\| + \Lip(\hat{f}_j) \cdot \int_Z |\hat{\pi}_i-\hat{\pi}_{ij}|\: d\|F_j\|\right)\\
&\geq \eta
\end{align*}
which is a contradiction. The second last step used that
\[
F_j(\hat{f}_j,\hat{\pi}_{1j},\ldots,\hat{\pi}_{kj}) = E_j(f,\pi_1,\ldots,\pi_k) \geq \eta,
\]
the definition of mass and \cite[Proposition 5.1]{ambrosio_kirchheim}. The last step relied on the estimate
\[
\Mass(\partial F_j) \leq \Mass(\psi_{j\#}\langle S_j,d_p,r_j\rangle) + \Mass(\psi_{j\#}\langle R,d_p,r_j\rangle) \leq C + k\cdot \Theta.
\]
Now, as \(E_j \rightarrow 0\) weakly, \(\partial E_j \rightarrow 0\) weakly and as \(\sup_{j\in \N}\Mass(\partial E_j) < \infty\), it follows from \cite[Theorem 1.4]{wenger_flat} that \(\FillVol(\partial E_j)\rightarrow 0\) as \(j\rightarrow \infty\) since \(Y\) satisfies local cone type inequalities. Hence we may pick fillings \(T_j'\in \Iak{k}(Y)\) of \(\partial E_j\) with \(\Mass(T_j')\rightarrow 0\). We may obviously assume that \(T_j'\) is supported on \((X,\frac{1}{r_j}d)\) so \(T_j'\) corresponds to a filling \(T_j\) of \(\partial ((S_j-R)\res B(p,r_j))\). By possibly throwing out a finite number of elements of our sequence, we may assume that \(\Mass(T_j') < \theta_k \cdot a^k\) for every \(j\in \N\). Then it follows from Corollary \ref{lowerbound} that \(\spt(T_j)\) is at distance \(< a\cdot r_j\) from \(S(p,r_j)\) and hence at distance \(>(1-a)\cdot r_j\) from \(p\). Finally, as \(E_j - T_j' \rightarrow 0\) as \(j\rightarrow \infty\), again by \cite[Theorem 1.4]{wenger_flat}, for every \(\varepsilon > 0\), there exists an integer \(N\) such that if \(j\geq N\) then there exists a filling \(W_j'\in \Iak{k+1}(Y)\) of \(E_j - T_j'\) with \(\Mass(W_j') < \varepsilon\). As for \(T_j'\), we may assume that \(W_j'\) is supported on \((X,\frac{1}{r_j}d)\) so it corresponds to a \(W_j\in \Iak{k+1}(X)\) with \(\Mass(W_j) < \varepsilon \cdot r_j^{k+1}\). This is a contradiction.
\end{proof}

We will need the following lemma, which is essentially stated as part of \cite[Theorem 5.6]{kl_hrh}. There they say that it can be proven by a slicing argument and also refer to \cite[Proposition 4.5]{kl_hrh}, from which the claim follows, but as that proposition is not at our disposal and they do not write out the slicing argument, we provide the proof here.

\begin{lemma}\label{density_lemma}
\emph{(cf. \cite[Theorem 5.6]{kl_hrh})}
Let \(X\) be a proper Hadamard space, \(k\geq 1\) an integer, \(R \in \Zloc{k}(X)\) with \(\Theta_\infty(R) < \infty\) and \(S\in \Zloc{k}(X)\) a minimizer with \(F_\infty(S-R) = 0\). Then \(\Theta_\infty(S)\leq \Theta_\infty(R)\).
\end{lemma}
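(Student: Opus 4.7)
The plan is to construct, for every $\varepsilon>0$ and every sufficiently large $r$, a competitor to $S\res B(p,s)$ for some $s\in(r/2,r)$, by combining $R\res B(p,s)$ with a well-chosen slice of the small filling $V$ supplied by the hypothesis $F_\infty(S-R)=0$. The minimality of $S$ then yields the desired mass bound, which the monotonicity formula propagates to the limit.

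If $S=0$ the claim is trivial, so I would fix $p\in\spt(S)$; the monotonicity formula then shows that $r\mapsto\frac{\|S\|(B(p,r))}{r^k}$ is non-decreasing and hence converges to $\Theta_\infty(S)$. Given $\varepsilon>0$ and $r$ large, $F_\infty(S-R)=0$ supplies some $V\in\Iak{k+1}(X)$ with $\Mass(V)\le\varepsilon\,r^{k+1}$ and $\spt(S-R-\partial V)\cap B(p,r)=\emptyset$. Applying the coarea inequality to $V$ and the $1$-Lipschitz function $d_p$ on $(r/2,r)$ gives
\[
\int_{r/2}^{r}\Mass(\langle V,d_p,t\rangle)\,dt\le\|V\|(B(p,r))\le\varepsilon\,r^{k+1},
\]
and averaging, together with the slicing theorem, furnishes some $s\in(r/2,r)$ for which $\Mass(\langle V,d_p,s\rangle)\le 2\varepsilon\,r^k$ and all of $\langle V,d_p,s\rangle$, $V\res B(p,s)$, $S\res B(p,s)$, $R\res B(p,s)$ are genuine integral currents.

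With such an $s$ fixed, the slicing identity and the inclusion $\spt(S-R-\partial V)\cap B(p,s)=\emptyset$ give
\[
S\res B(p,s) = \bigl(R\res B(p,s)-\langle V,d_p,s\rangle\bigr) + \partial\bigl(V\res B(p,s)\bigr),
\]
so the current $S':=R\res B(p,s)-\langle V,d_p,s\rangle\in\Iak{k}(X)$ satisfies $\partial S'=\partial(S\res B(p,s))$. Minimality of $S\res B(p,s)$ then yields
\[
\|S\|(B(p,s))\le\Mass(S')\le\|R\|(B(p,s))+2\varepsilon\,r^k\le\|R\|(B(p,s))+2^{k+1}\varepsilon\,s^k.
\]

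Dividing by $s^k$ and letting $r\to\infty$ produces arbitrarily large $s=s(r)$ with $\frac{\|S\|(B(p,s))}{s^k}\le\frac{\|R\|(B(p,s))}{s^k}+2^{k+1}\varepsilon$. Since $\Theta_\infty(S)$ equals the limit of a monotone sequence, any such subsequential bound controls it, giving $\Theta_\infty(S)\le\Theta_\infty(R)+2^{k+1}\varepsilon$; sending $\varepsilon\searrow 0$ completes the argument. The main obstacle I anticipate is technical rather than conceptual: ensuring that the restriction $S\res B(p,s)$ and the slice $\langle V,d_p,s\rangle$ are actual integral currents, since this is needed both to invoke the minimality of $S\res B(p,s)$ and to legitimately bound $\Mass(S')$. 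This is resolved by picking $s$ from the full-measure subset of $(r/2,r)$ on which both the slicing theorem and the averaged mass bound apply simultaneously.
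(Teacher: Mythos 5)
Your proof is correct and takes essentially the same route as the paper: produce a small filling $V$ from $F_\infty(S-R)=0$, pick a good radius $s$ by the coarea inequality so that $\langle V,d_p,s\rangle$ is small, use $R\res B(p,s)-\langle V,d_p,s\rangle$ as a competitor for $S\res B(p,s)$, and pass to the limit. The only cosmetic difference is that you slice in $(r/2,r)$ and then invoke the monotonicity formula to promote the subsequential bound, whereas the paper slices in $(r,(1+\varepsilon)r]$ so that $\|S\|(B(p,r))\le\|S\|(B(p,s))$ holds automatically and no monotonicity is needed; both variants are fine.
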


\begin{proof}
We use a slicing argument as is hinted at in the proof of \cite[Theorem 5.6]{kl_hrh} (cf. also the proof of \cite[Proposition 4.5]{kl_hrh}): Let \(\varepsilon > 0\). There exists an \(r_\varepsilon > 0\) such that if \(r \geq r_\varepsilon\) then \(\|R\|(B(p,r))\leq (\Theta +\varepsilon)\cdot r^k\) and there exists \(V \in \Iak{k+1}(X)\) with \(\spt(S - R - \partial V) \cap B(p,(1+\varepsilon)\cdot r) = \emptyset\) and \(\Mass(V) < \varepsilon^2\cdot r^k\). Fix \(r \geq r_\varepsilon\) and \(V \in \Iak{k+1}(X)\) with \(\spt(S - R - \partial V) \cap B(p,(1+\varepsilon)\cdot r) = \emptyset\) and \(\Mass(V) < \varepsilon^2\cdot r^k\). By the slicing theorem, there exists an \(s \in (r,(1+\varepsilon)\cdot r]\) such that \(\langle V, d_p, s\rangle \in \Iak{k}(X)\) and \(\Mass(\langle V,d_p,s\rangle) < \varepsilon\cdot r^k\). Then, by minimality of \(S\), 
\[
\|S\|(B(p,r)) \leq \|S\|(B(p,s)) \leq \Mass(\langle V,d_p,s\rangle + R \res B(p,s)) \leq (\Theta + 2\varepsilon)\cdot r^k.
\]
Since this holds for arbitrarily large \(r\), \(\Theta_\infty(S) \leq \Theta + 2\varepsilon\) and as \(\varepsilon > 0\) is arbitrary, \(\Theta_\infty(S)\leq \Theta\).
\end{proof}

\begin{remark}\label{Smassbound}
Let \(R\in \Zloc{k}(X)\) be conical with respect to \(p \in X\), \(\Theta\coloneqq \Theta_\infty(R) < \infty\). Then by \cite[Lemma 7.2(2)]{kl_hrh}, the quotient \(\|R\|(B(p,r))\cdot r^{-k}\) is non-decreasing in \(r > 0\) so \(\|R\|(B(p,r))\leq \Theta\cdot r^k\) for every \(r > 0\). Let \(s > 0\) and \(S_s\) be a minimal filling of \(\langle R,d_p,s\rangle\). Then, by the monotonicity formula,
\[
\|S_s\|(B(p,r)) \leq \frac{\|S_s\|(B(p,s))}{s^k}\cdot r^k \leq  \frac{\|R\|(B(p,s))}{s^k}\cdot r^k \leq \Theta\cdot r^k
\]
for every \(0 \leq r \leq s\).
Also, if \(S \in \Zloc{k}(X)\) is a minimizer with \(F_\infty(S-R) = 0\), then by Lemma \ref{density_lemma} and the monotonicity formula, it holds that \(\|S\|(B(p,r))\leq \Theta\cdot r^k\) for every \(r\geq 0\).
\end{remark}

Now we prove a proposition which will play the role of \cite[Proposition 4.5]{kl_hrh} and whose proof uses a similar induction on scales argument.

\begin{proposition}\label{partial_filling}
Let \(X\) be a proper Hadamard space, \(k\geq 1\) an integer, \(\bar{Z}\in \Zak{k}(\partial_T X)\) a strongly immovable cycle and \(R\in \Zloc{k}(X)\) the cone over \(\bar{Z}\) from \(p\). Write \(R_r\coloneqq R\res B(p,r)\). For every \(\varepsilon > 0\) there exists \(r_\varepsilon > 0\) such that if \(r\geq r_\varepsilon\) then 
\begin{enumerate}[label=\emph{(\roman*)}]
    \item \(F_{p,r}(S_s - R_s) < \varepsilon\) for every \(s \geq r\) and \(S_s\in \Iak{k}(X)\) minimal with \(\partial S_s = \partial R_s\).
    \item \(F_{p,r}(S-R) < \varepsilon\) for every \(S \in \Zloc{k}(X)\) minimal with \(F_\infty(S-R) = 0\).
\end{enumerate}
\end{proposition}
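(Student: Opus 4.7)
The strategy is to iterate Proposition~\ref{reliso} in an induction-on-scales argument, analogous to Kleiner and Lang's use of Wenger's sub-Euclidean isoperimetric inequality in their proof of \cite[Proposition~4.5]{kl_hrh}; here Proposition~\ref{reliso} plays that role. Fix a small $a \in (0, 1/2)$, set $q := (1-a)/(1+a)$, and choose $\varepsilon' > 0$ small in terms of $\varepsilon, a, k$ (precisely, so that $\varepsilon' (1+a)^{k+1}/(1-a)^{2(k+1)} < \varepsilon$).

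For part~(i), given $s \geq r$ and a minimizer $S_s$ with $\partial S_s = \partial R_s$, form the initial filling $V_0$ of $Z_s := S_s - R_s$ as the cone from $p$ over $Z_s$. By Theorem~\ref{cone_ineq}, Remark~\ref{Smassbound} and Corollary~\ref{lowerbound}, $\Mass(V_0) \leq C_0 s^{k+1}$ for a universal $C_0$, and $\spt(S_s - R - \partial V_0) = \spt(R_s - R)$ lies outside $B(p, s)$. When $s \geq (1+a) r$, apply Proposition~\ref{reliso} at scale parameter $s/(1+a)$ (first replacing $V_0$ by an absolutely minimizing filling of the same boundary, and arranging the slice hypotheses via a small generic perturbation of the scale using the slicing theorem and coarea). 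Then iterate at scale parameters $q^{n-1} s/(1+a)$; after step~1 the input constant stabilizes at a fixed value depending only on $\varepsilon', a, k$, so a single threshold $\bar{r} = \bar{r}(C, \varepsilon', a)$ controls every step. Stopping after $N = \lfloor \log(r/s)/\log q\rfloor$ steps places $q^N s \in [r, r/q)$, so $V_N$ fills $S_s - R$ on $B(p, q^N s) \supseteq B(p, r)$, and since $R = R_s$ on $B(p, r)$ it also fills $Z_s$ there. A direct computation using $q^{N-1} s < r/q$ bounds $\Mass(V_N)/r^{k+1}$ by a constant multiple of $\varepsilon'$, hence strictly less than $\varepsilon$. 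The narrow window $s \in [r, (1+a) r)$ is handled by a single application of Proposition~\ref{reliso} starting from an augmented initial filling: the cone over $Z_s$ plus an annular correction covering the shell $[s, (1+a) r]$, whose total mass is still controlled by a universal multiple of $r^{k+1}$.

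For part~(ii), the hypothesis $F_\infty(S - R) = 0$ provides, for every minimizer $S$ and every $\eta > 0$, some $s = s(S, \eta)$ with $F_{p, s}(S - R) < \eta$. Take $V_0$ almost-realizing this infimum and iterate Proposition~\ref{reliso} exactly as in part~(i) to propagate the small mass from scale $s$ down to the target scale. Because the thresholds in Proposition~\ref{reliso} depend only on $(C, \varepsilon', a)$ and these stabilize independently of $S$ past the first step, the resulting $r_\varepsilon$ is uniform; letting $s \to \infty$ then covers every $r \geq r_\varepsilon$.

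The main obstacle is the bookkeeping: at each iteration one must verify the slice hypotheses (integrality and controlled mass of the scale-slices of both the current filling and $S$), which requires slightly perturbing the chosen scale using the slicing theorem and coarea inequality, while simultaneously keeping the constants $C$ and $\varepsilon'$ stable so that a single $\bar{r}$ governs all steps uniformly. A secondary subtlety, specific to~(i), is designing the augmented initial filling for $s$ only slightly above $r$; this is a one-off construction outside the inductive cascade and so does not compound across iterations.
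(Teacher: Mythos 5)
Your overall strategy---iterate Proposition~\ref{reliso} via an induction on scales, using the slicing theorem to verify the hypotheses at each step, and stabilize the constants so that a single threshold $\bar{r}$ suffices---is exactly the approach the paper takes (the paper works with the fixed ratio $1/4$, with $a=1/2$, and starts from a minimal full filling $V_0$ of $S_s-R_s$; you work with the ratio $q=(1-a)/(1+a)$ and start from the cone). Part~(ii) is also treated the same way. So the high-level argument is correct and matches the paper.

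However, your treatment of the ``narrow window'' has a genuine gap, and the proposed fix cannot work. First, the window where no iteration is available is $s\in[r,\,r/q)$ with $r/q=(1+a)r/(1-a)$, which is strictly larger than your stated window $[r,(1+a)r)$. More seriously, the ``augmented initial filling: the cone over $Z_s$ plus an annular correction covering the shell $[s,(1+a)r]$'' does not exist. To invoke Proposition~\ref{reliso} with output avoiding $B(p,r)$ one must use a scale $\rho\geq r/(1-a)$, and the input hypothesis then requires $\spt(S_s-R-\partial V)$ to avoid $B(p,(1+a)\rho)\supseteq B(p,r/q)$. But when $s<r/q$ this is impossible for any $V\in\Iak{k+1}(X)$: if $(S_s-R-\partial V)\res B(p,\rho')=0$ for some $\rho'>s$, slicing gives, for a.e.\ such $\rho'$,
\[
\langle \partial V,d_p,\rho'\rangle \;=\; \partial\bigl((\partial V)\res B(p,\rho')\bigr) \;=\; \partial\bigl(S_s-R_{\rho'}\bigr) \;=\; \langle R,d_p,s\rangle - \langle R,d_p,\rho'\rangle,
\]
yet the left side is supported on $S(p,\rho')$ while $\langle R,d_p,s\rangle$ is supported on $S(p,s)$, forcing $\langle R,d_p,s\rangle=0$---false for $\bar{Z}\neq 0$. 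So no annular correction (or any other $V$) can satisfy the reliso hypothesis at the scale you need, and the window case must be handled differently. The paper circumvents this by taking the initial $V_0$ to be a full filling (so $\spt(S_s-R_s-\partial V_0)=\emptyset$ is vacuously far from $p$) and terminating the iteration as soon as the scale $s_j$ drops to within a fixed factor of $r$, relying on a mass bound for $V_0$; your scheme, which applies one more reliso step near the bottom scale, runs into the slicing obstruction above and needs to be restructured along the paper's lines.
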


\begin{proof}
Let \(\varepsilon > 0\). Without loss of generality, we assume that \(\varepsilon \leq 2\cdot \left(\frac{8}{3}\right)^k\cdot \Theta\) where \(\Theta \coloneqq \Theta_\infty(R)\). Let \(r_\varepsilon \coloneqq \bar{r}\left(C,(\frac{3}{32})^{k+1}\cdot \varepsilon,\frac{1}{2}\right)\) where \(C\coloneqq 9\cdot \left(\frac{4}{3}\right)^k\cdot\Theta\) and \(\bar{r}\) is the function from Proposition \ref{reliso}.\\

(i) Let \(s \geq r \geq r_\varepsilon\). We let \(s_j\coloneqq 4^{-j}\cdot s\) for \(j\geq 1\) and \(V_0\in \Iak{k+1}(X)\) be a minimal filling of \(S_s - R_s\). Then \(\Mass(V_0) < \left(\frac{3}{32}\right)^{k+1}\cdot \varepsilon\cdot s_0^{k+1} < \frac{\varepsilon}{4^{k+1}}s_0^{k+1}\). Now assume that we are given \(V_j\in \Iak{k+1}(X)\) with \(\spt(S_s-R_s - \partial V_j)\) at distance \(> s_j\) from \(p\) and \(\Mass(V_j) < \frac{\varepsilon}{4^{k+1}}s_j^{k+1}\). If \(4r < s_j\) we find \(V_{j+1}\in \Iak{k+1}(X)\) in the following way: By the slicing theorem, there exists a measurable set \(A \subseteq (\frac{1}{2}s_j,\frac{2}{3}s_j)\) with \(|A|\geq \frac{1}{9}s_j\) and such that for every \(s'\in A\) it holds that \(\langle S_s,d_p,s'\rangle \in \Zak{k-1}(X)\) and
\begin{equation}\label{sslicebound}
\Mass(\langle S_s,d_p,s'\rangle) \leq 18\cdot \Theta \cdot \left(\frac{2}{3}\right)^ks_j^{k-1} \leq C\cdot (s')^{k-1}.
\end{equation}
Again, by the slicing theorem, there exists a measurable set \(B\subseteq (\frac{1}{2}s_j,\frac{2}{3}s_j)\) with \(|B|\geq \frac{1}{9}s_j\) and such that for every \(s' \in B\) it holds that \(\langle V_j,d_p,s'\rangle \in \Iak{k}(X)\) and 
\begin{equation}\label{vslicebound}
\Mass(\langle V_j,d_p,s'\rangle) \leq 18\cdot \frac{\varepsilon}{4^{k+1}}\cdot s_j^k\leq C\cdot (s')^k.
\end{equation}
As \(|A|,|B|\geq \frac{1}{9}s_j\), \(|A|+|B| > \frac{1}{6}s_j = |(\frac{1}{2}s_j,\frac{2}{3}s_j)|\) so there exists \(s'\in (\frac{1}{2}s_j,\frac{2}{3}s_j)\) such that \eqref{sslicebound} and \eqref{vslicebound} hold. Then as \(\spt(S_s-R_s-\partial V_j)\) is at distance \(> s_j \geq \frac{3}{2}s'\) from \(p\), \(\Mass(V_j) \leq C\cdot (s')^{k+1}\) and \(\|S_s\|(B(p,t))\leq \Theta \cdot t^k\) for every \(t\geq 0\) by Remark \ref{Smassbound}, Proposition \ref{reliso} implies that there exists \(V_{j+1}\in \Iak{k+1}(X)\) with \(\spt(S_s-R_s - \partial V_{j+1})\) at distance \(> \frac{1}{2}s' > \frac{1}{4}s_j=s_{j+1}\) from \(p\) and \(\Mass(V_{j+1}) < \left(\frac{3}{32}\right)^{k+1}\cdot \varepsilon\cdot (s')^{k+1} < \frac{\varepsilon}{4^{k+1}}s_{j+1}^{k+1}\).\\

On the other hand, if \(4r \geq s_j\), then 
\[
F_{p,r}(S_s - R_s) \leq \frac{\Mass(V_j)}{r^{k+1}} < \frac{\varepsilon\cdot s_j^{k+1}}{4^{k+1}r^{k+1}} \leq \varepsilon
\]
so we are done.\\

(ii) Let us now assume that we are given a minimizer \(S \in \Zloc{k}(X)\) with \(F_\infty(S - R) = 0\) and \(r\geq r_\varepsilon\). As \(F_\infty(S-R) = 0\), there exists an \(s \geq r\) such that there exists an \(V_0 \in \Iak{k+1}(X)\) with \(\spt(S - R-\partial V) \cap B(p,s) = \emptyset\) and \(\Mass(V_0) < \frac{\varepsilon}{4^{k+1}}\cdot s^{k+1}\). Now we can just repeat the proof of (i) with \(S\) and \(R\) in place of \(S_s\) and \(R_s\), respectively.
\end{proof}

Now we can finish the proof of existence of a solution to the asymptotic Plateau problem for strongly immovable cycles in the Tits boundary in exactly the same way as Kleiner and Lang do in \cite[Theorem 5.6]{kl_hrh}. We give the proof for completeness.

\begin{theorem}\label{fsol}
Let \(X\) be a proper Hadamard space, \(k\geq 1\) an integer and \(\bar{Z}\in \Zak{k-1}(\partial_T X)\) a strongly immovable cycle. Let \(p\in X\) and \(R\in \Zloc{k}(X)\) denote the cone from \(p\) over \(\bar{Z}\). There exists an absolutely minimizing cycle \(S \in \Zloc{k}(X)\) such that \(F_\infty(S-R) = 0\).
\end{theorem}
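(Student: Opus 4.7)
The plan is to build $S$ as a subsequential local flat limit of minimal fillings of the spherical slices of $R$, and then use Proposition \ref{partial_filling}(i) to verify the asymptotic filling condition.

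First I would choose a sequence $s_n \nearrow \infty$ such that for each $n$, $\langle R, d_p, s_n\rangle \in \Zak{k-1}(X)$ (possible by the slicing theorem applied to the local cycle $R$), and let $S_n \in \Iak{k}(X)$ be a minimal filling of $\langle R, d_p, s_n \rangle$, which exists by Wenger's compactness theorem. By Remark \ref{Smassbound}, $\|S_n\|(B(p,r)) \leq \Theta \cdot r^k$ for every $0 \leq r \leq s_n$, where $\Theta = \Theta_\infty(R)$. For each fixed radius $\rho > 0$, I would apply the slicing theorem to find $\rho_n \in [\rho, 2\rho]$ with $\langle S_n, d_p, \rho_n\rangle \in \Iak{k-1}(X)$ and $\Mass(\langle S_n, d_p, \rho_n\rangle)$ bounded by $C_\rho := c \cdot \Theta \cdot \rho^{k-1}$ uniformly in $n$ (for all $n$ large enough that $s_n > 2\rho$). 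Then $T_n := S_n \res B(p, \rho_n) \in \Iak{k}(X)$ has uniformly bounded mass and boundary mass, and is supported in the compact ball $B(p, 2\rho)$.

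Next, by the Ambrosio--Kirchheim compactness theorem together with a diagonal argument over an exhaustion $\rho = 1, 2, 3, \ldots$, I can extract a subsequence (still denoted $S_n$) which converges weakly and, by Wenger's theorem (using that $X$ satisfies cone-type inequalities as a Hadamard space), in the local flat topology to some $S \in \Iloc{k}(X)$. Since $\spt(\partial S_n) \subseteq S(p, s_n)$, for every compact set $K \subseteq X$ we have $(\partial S_n) \res K = 0$ for all $n$ large enough, so the weak limit satisfies $\partial S = 0$; hence $S \in \Zloc{k}(X)$. The local flat limit of absolutely area minimizing chains is absolutely area minimizing, so $S$ is a minimizer.

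Finally, I would verify $F_\infty(S-R) = 0$ using Proposition \ref{partial_filling}(i). Fix $\varepsilon > 0$ and let $r_\varepsilon$ be as provided by that proposition, and fix $r \geq r_\varepsilon$. For every sufficiently large $n$, $s_n \geq r$ and there exists $V_n \in \Iak{k+1}(X)$ with $\Mass(V_n) < \varepsilon \cdot r^{k+1}$ and $\spt(S_n - R\res B(p, s_n) - \partial V_n) \cap B(p, r) = \emptyset$; equivalently, $(S_n - R)\res B(p,r) = (\partial V_n)\res B(p,r)$. The main technical point is that we need a single $V \in \Iak{k+1}(X)$ witnessing $F_{p,r}(S - R) < 2\varepsilon$ for the limit cycle $S$. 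For this I would truncate each $V_n$ via slicing, picking $t_n \in (r, 2r]$ with $\langle V_n, d_p, t_n \rangle \in \Iak{k}(X)$ of mass $\lesssim \varepsilon \cdot r^k$, then cap off $V_n \res B(p, t_n)$ by a Euclidean isoperimetric filling of its extra boundary (using Wenger's isoperimetric inequality) to obtain $\tilde{V}_n \in \Iak{k+1}(X)$ with the same flat behavior relative to $B(p, r)$ and mass controlled by a constant times $\varepsilon \cdot r^{k+1}$. Then $\tilde{V}_n$ have uniformly bounded mass and are supported in a fixed ball, so the Ambrosio--Kirchheim compactness theorem extracts a limit $V$, and the local flat convergence $S_n \to S$, $R \res B(p, s_n) \to R$ passes the defect-support condition to the limit, giving $\spt(S - R - \partial V) \cap B(p, r) = \emptyset$ with $\Mass(V) \lesssim \varepsilon \cdot r^{k+1}$. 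Since $\varepsilon$ and $r \geq r_\varepsilon$ were arbitrary, $F_\infty(S - R) = 0$. The expected main obstacle is the truncation and limiting of the fillings $V_n$ in the last step; this is exactly the technical issue Kleiner and Lang handle in the proof of their Theorem~5.6, and their argument transfers essentially verbatim once Proposition \ref{partial_filling} is in place.
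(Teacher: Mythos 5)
Your proposal is correct and follows the same route as the paper: take minimal fillings $S_s$ of the slices $\langle R, d_p, s\rangle$, extract a subsequential local flat limit $S$ (which is automatically a cycle and absolutely minimizing), and verify $F_\infty(S-R)=0$ by combining Proposition~\ref{partial_filling}(i) with a limiting argument for the fillings. The paper is terser, citing \cite[Theorem 2.3(2)]{kl_hrh} for the compactness step and \cite[Lemma 5.5]{kl_hrh} for the filling-limit step, whereas you unfold those into the slicing/diagonal and truncate-and-cap arguments they encapsulate.
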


\begin{proof}
For each \(r > 0\), let \(S_r\in \Iak{k}(X)\) be a minimal filling of \(\partial (R\res B(p,r)) = \langle R,d_p,r\rangle\). By Remark \ref{Smassbound}, for every \(0\leq r \leq s\) it holds that \(\|S_s\|(B(p,r)) \leq \Theta \cdot r^k\) where \(\Theta \coloneqq \Theta_\infty(R)\) and as \(\spt(\partial S_s) \cap B(p,r) = \emptyset\) when \(r < s\), there exists by \cite[Theorem 2.3(2)]{kl_hrh} an absolutely minimizing \(S\in \Iloc{k}(X)\) and a sequence \(s_j \nearrow \infty\) such that \(S_{s_j} \rightarrow S\) as \(j\rightarrow \infty\), in the local flat topology. Let \(\varepsilon > 0\). By Proposition \ref{partial_filling}(i), there exists an \(r_\varepsilon > 0\) such that if \(s\geq r\geq r_\varepsilon\) then \(F_{p,r}(S_s-R_s) < \varepsilon\) and by \cite[Lemma 5.5]{kl_hrh}, \(\lim_{j\rightarrow \infty}F_{p,r}((S-S_{s_j})-(R-R_{s_j})) = 0\) for every \(r > 0\) as \((S-S_{s_j})-(R-R_{s_j})\rightarrow 0\) in the local flat topology as \(j\rightarrow \infty\). Hence for \(r\geq \bar{r}\), it holds that
\[
F_{p,r}(S-R) \leq \limsup_{j\rightarrow \infty} \left(F_{p,r}((S-S_{s_j})-(R-R_{s_j})) + F_{p,r}(S_{s_j} - R_{s_j})\right) < \varepsilon.
\]
This shows that \(F_\infty(S-R) = 0\).
\end{proof}

\begin{remark}
From \cite[Lemma 7.2(3)]{kl_hrh} it follows that if \(\bar{Z}\neq 0\) then \(S\neq 0\).
\end{remark}

Our next goal is to show that \(\partial_T \spt(S) = \spt(\bar{Z})\) when \(S\) and \(\bar{Z}\) are as in the previous theorem. That is done in the same way as in \cite{kl_hrh}: The inclusion \(\partial_T \spt(S)\subseteq \spt(\bar{Z})\) is proven by establishing first the following proposition which is just an adaption of \cite[Theorem 8.2(2)]{kl_hrh} and proven in the same way, and then concluding as in the proof of \cite[Proposition 8.2]{kl_hrh}. The reverse inclusion is proven by establishing Theorem \ref{uniquetangent} which is a substitute for \cite[Theorem 7.3]{kl_hrh} and then concluding as in \cite[Theorem 7.3]{kl_hrh}.

\begin{proposition}\label{sublinear}
\emph{(cf. \cite[Theorem 8.2(2)]{kl_hrh})}
Let \(X\) be a proper Hadamard space, \(k\geq 1\) an integer and \(\bar{Z}\in \Zak{k-1}(\partial_T X)\) a strongly immovable cycle. Let \(p\in X\) and \(R\in \Zloc{k}(X)\) denote the cone over \(\bar{Z}\) from \(p\). For every \(\varepsilon > 0\) there exists an \(r_\varepsilon > 0\) such that if \(S\in \Zloc{k}(X)\) is a minimizer with \(F_\infty(S-R) = 0\) and \(x\in \spt(S)\) is a point with \(d(p,x) \geq r_\varepsilon\) then \(d(x,\spt(R)) < \varepsilon\cdot d(p,x)\).
\end{proposition}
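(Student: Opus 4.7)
The plan is to argue by contradiction via a blow-down to an asymptotic cone, in direct analogy with the proof of Proposition \ref{reliso}. If the conclusion fails, I obtain $\varepsilon>0$, minimizers $S_j\in\Zloc{k}(X)$ with $F_\infty(S_j-R)=0$, and points $x_j\in\spt(S_j)$ with $d(p,x_j)=:r_j\to\infty$ and $d(x_j,\spt(R))\geq\varepsilon r_j$. To place the limiting point strictly inside the unit ball of the rescaled metric, I rescale by $2r_j$: fix a non-principal ultrafilter $\omega$ on $\N$, set $X_\omega:=\lim_\omega(X,\tfrac{1}{2r_j}d,p)$, and identify $C_TX$ with its image in $X_\omega$ under the canonical embedding $\iota$ from Lemma \ref{asympttits}. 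Let $\bar{R}\in\Iak{k}(C_TX)$ denote the lift of $R$ provided by Theorem \ref{lift}.

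Writing $\tilde{S}_j$ for $S_j$ viewed in the rescaled space, Remark \ref{Smassbound} bounds $\|\tilde{S}_j\|$ on balls around $p$ uniformly in $j$, the coarea inequality controls the boundaries of appropriate truncations, and the uniform lower-density bound (Corollary \ref{lowerbound}, applicable since $\partial S_j=0$) makes their supports uniformly precompact on bounded regions. Combining Gromov's compactness theorem, the Ambrosio--Kirchheim compactness theorem, and Proposition \ref{embedding_lemma}, I extract along a subsequence a minimizing cycle $\bar{S}\in\Zloc{k}(X_\omega)$ such that $\tilde{S}_j\res B(p,s)$ blows down to $\bar{S}\res B(p_\omega,s)$ in the local flat topology for a.e. $s>0$; by the same reasoning as in Claim \ref{lift_asympt}, the blow-down of $R$ is exactly $\iota_\#\bar{R}$.

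To show $\bar{S}=\iota_\#\bar{R}$ on $B(p_\omega,1)$, I transfer the $F$-asymptotic datum to the asymptotic cone and then invoke strong immovability. Proposition \ref{partial_filling}(ii) applied at scale $2r_j$ yields, for every $\delta>0$ and all sufficiently large $j$, chains $V_j^{(\delta)}\in\Iak{k+1}(X)$ of mass $<\delta(2r_j)^{k+1}$ with $\spt(S_j-R-\partial V_j^{(\delta)})\cap B(p,2r_j)=\emptyset$. After truncating at a suitable sphere and rescaling, a further compactness step gives, for every $\delta>0$, a chain $\bar{V}^{(\delta)}\in\Iak{k+1}(X_\omega)$ of mass $<\delta$ such that $\bar{T}^{(\delta)}:=\bar{S}-\iota_\#\bar{R}-\partial\bar{V}^{(\delta)}$ has support disjoint from $B(p_\omega,1)$. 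Combining the strong-immovability identity from Remark \ref{str_imm_eqv} with the density bounds $\|\bar{S}\|(B(p_\omega,r))\leq\Theta r^k=\|\iota_\#\bar{R}\|(B(p_\omega,r))$ for $r\leq 1$, the computation at the end of the proof of Proposition \ref{reliso} now goes through verbatim (letting $\delta\searrow 0$) and yields $\bar{S}\res B(p_\omega,r)=\iota_\#\bar{R}\res B(p_\omega,r)$ for every $r<1$.

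Finally, the point $x_\omega:=[(x_j)_{j\in\N}]$ lies in $B(p_\omega,3/4)\subseteq B(p_\omega,1)$ at distance $1/2$ from $p_\omega$. The standard fact that flat limits of minimizers with a uniform lower-density bound have supports converging in the local Hausdorff sense (a consequence of Corollary \ref{lowerbound} combined with the monotonicity formula and the Euclidean isoperimetric inequality) gives $x_\omega\in\spt(\bar{S})$. On the other hand, the blow-down identification of $R$ translates $d(x_j,\spt(R))\geq\varepsilon r_j$ into $d_\omega(x_\omega,\spt(\iota_\#\bar{R}))\geq\varepsilon/2$, which contradicts the equality $\bar{S}=\iota_\#\bar{R}$ on $B(p_\omega,3/4)$. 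The main obstacle I foresee is the middle step: passing the $F$-asymptotic filling $V_j^{(\delta)}$ through the ultralimit in a form suitable for invoking strong immovability, while ensuring the truncations of $V_j^{(\delta)}$ remain uniformly controlled. The remaining steps are technical adaptations of arguments already present in Proposition \ref{reliso} and the Kleiner--Lang treatment of the top-dimensional case.
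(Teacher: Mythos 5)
Your blow-down argument is sound in outline and, with careful bookkeeping, would give a correct proof; but it is a noticeably heavier route than the one the paper refers to. When the text says the proposition is ``proven in the same way'' as Kleiner--Lang's Theorem~8.2(2), with Proposition~\ref{partial_filling} as the substitute for their Proposition~4.5, it means a direct slicing argument -- no asymptotic cone, no compactness theorems, no Hausdorff convergence of supports. Concretely: given $\eta\in(0,2)$, choose $\delta<\theta_k\eta^{k+1}/2^{3k+3}$ and let $r_\eta:=r_\delta$ from Proposition~\ref{partial_filling}(ii). Suppose $x\in\spt(S)$ with $\rho:=d(p,x)\geq r_\eta$ and $d(x,\spt(R))\geq\eta\rho$. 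Set $r=2\rho$; since $F_{p,r}(S-R)<\delta$, pick $V\in\Iak{k+1}(X)$ with $\Mass(V)<\delta r^{k+1}$ and $\spt(S-R-\partial V)\cap B(p,r)=\emptyset$. The ball $B(x,\eta\rho/2)$ lies in $B(p,r)$ and is disjoint from $\spt(R)$, so there $S$ agrees with $\partial V$; hence $S\res B(x,s)=\partial(V\res B(x,s))-\langle V,d_x,s\rangle$ for a.e.\ $s<\eta\rho/2$. By coarea, choose $s\in(\eta\rho/4,\eta\rho/2)$ with $\langle V,d_x,s\rangle\in\Iak{k}(X)$ and $\Mass(\langle V,d_x,s\rangle)\leq 4\Mass(V)/(\eta\rho)$. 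Since $S\res B(x,s)$ and $-\langle V,d_x,s\rangle$ have the same boundary and $S$ is minimizing, Corollary~\ref{lowerbound} gives
\[
\theta_k(\eta\rho/4)^k\;\leq\;\|S\|(B(x,s))\;\leq\;\Mass(\langle V,d_x,s\rangle)\;<\;\frac{4\delta r^{k+1}}{\eta\rho}\;=\;\frac{2^{k+3}\delta\rho^k}{\eta},
\]
i.e.\ $\theta_k\eta^{k+1}<2^{3k+3}\delta$, a contradiction. By contrast, your route requires identifying the blow-down of $R$ with $\iota_\#\bar R$, passing the $F$-asymptotic fillings $V_j^{(\delta)}$ through the ultralimit (the step you correctly flag as delicate, and which does need the truncation-at-a-good-slice bookkeeping from the proof of Proposition~\ref{reliso} to control boundary masses before the compactness theorems can be applied), and the Hausdorff convergence of supports of minimizers; all of this can be made rigorous, but none of it is necessary. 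One minor simplification inside your own argument: once a single chain $\bar V^{(\delta)}$ is produced with $\bar S-\iota_\#\bar R-\partial\bar V^{(\delta)}$ supported outside $B(p_\omega,r)$, the mass computation from Proposition~\ref{reliso} already forces $\bar S\res B(p_\omega,r)=\iota_\#\bar R\res B(p_\omega,r)$; you do not need to send $\delta\searrow 0$.
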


\begin{proposition}\label{minsub}
Let \(X\) be a proper Hadamard space, \(k\geq 1\) an integer and \(\bar{Z}\in \Zak{k-1}(\partial_T X)\) a strongly immovable cycle. Let \(p\in X\) and \(R\in \Zloc{k}(X)\) denote the cone over \(\bar{Z}\) from \(p\). If \(S \in \Zloc{k}(X)\) is a minimizer with \(F_\infty(S-R) = 0\) then \(\partial_T\spt(S)\subseteq \spt(\bar{Z})\).
\end{proposition}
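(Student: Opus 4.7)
The plan is to show that every direction $u \in \partial_T \spt(S)$ can be approximated arbitrarily well, in the Alexandrov angle at $p$, by directions in $\spt(\bar Z)$. Combined with compactness of $\spt(\bar Z)$ (Corollary \ref{immcpt}) and the inequality $\angle_p \leq \angle_T$, this will force the approximating direction to actually equal $u$. So let $u \in \partial_T \spt(S)$ and choose sequences $x_j \in \spt(S)$ and $r_j \nearrow \infty$ with $\tfrac{1}{r_j} d(x_j, \can_p(r_j u)) \to 0$; in particular $d(p, x_j) = r_j(1 + o(1))$. Proposition \ref{sublinear} together with a diagonal argument produces $\varepsilon_j \searrow 0$ and points $y_j \in \spt(R)$ with $d(x_j, y_j) \leq \varepsilon_j \cdot d(p, x_j) + 1/j$.

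Since $\spt(R)$ is the union of the geodesic rays $\{\can_p(sv) : s \geq 0\}$ for $v \in \spt(\bar Z)$, for $j$ large I can write $y_j = \can_p(s_j v_j)$ with $v_j \in \spt(\bar Z)$ and $s_j = d(p, y_j) = r_j(1 + o(1))$. The triangle inequality then gives
\[
d(\can_p(r_j u), \can_p(r_j v_j)) \leq d(\can_p(r_j u), x_j) + d(x_j, y_j) + |s_j - r_j| = o(r_j).
\]
By the CAT(0) monotonicity, the map $t \mapsto \tfrac{1}{t}\, d(\can_p(tu), \can_p(tv_j))$ is non-decreasing with $\lim_{t \to 0^+}$ equal to $2\sin(\angle_p(u, v_j)/2)$, hence
\[
\angle_p(u, v_j) \leq 2 \arcsin\!\left(\frac{d(\can_p(r_j u), \can_p(r_j v_j))}{2 r_j}\right) \longrightarrow 0.
\]

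By Corollary \ref{immcpt}, $\spt(\bar Z)$ is compact in $\partial_T X$, so after passing to a subsequence $v_j \to v \in \spt(\bar Z)$ and therefore $\angle_T(v_j, v) \to 0$. Since $\angle_p \leq \angle_T$, the triangle inequality for $\angle_p$ yields
\[
\angle_p(u, v) \leq \angle_p(u, v_j) + \angle_T(v_j, v) \longrightarrow 0,
\]
so $\angle_p(u, v) = 0$. Two asymptotic rays emanating from $p$ with vanishing Alexandrov angle at $p$ must coincide, so $u = v \in \spt(\bar Z)$. The main obstacle is just the careful asymptotic bookkeeping ensuring every error term in the triangle inequality is $o(r_j)$; once Proposition \ref{sublinear} is in hand the remainder is a standard CAT(0) angle estimate together with the compactness of $\spt(\bar Z)$.
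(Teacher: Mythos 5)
Your overall strategy is the right one and matches the paper's suggested route (via Proposition \ref{sublinear}, approximation of $u$ by directions $v_j$ lying in $\spt(\bar Z)$, and compactness of $\spt(\bar Z)$ from Corollary \ref{immcpt}), and the bookkeeping up to $\angle_p(u,v_j)\to 0$ is fine. However, the final step is not valid: in a general Hadamard space, $\angle_p(u,v)=0$ does \emph{not} imply $u=v$ for $u,v\in\partial_TX$. Indeed, in a metric tree two geodesic rays from $p$ may agree on an initial segment and then branch apart, so they have Alexandrov angle $0$ at $p$ while converging to distinct points of the Tits boundary. The inequality $\angle_p\leq\angle_T$ goes the wrong way for your purposes, so nothing you have established rules this out, and the sentence ``two asymptotic rays emanating from $p$ with vanishing Alexandrov angle at $p$ must coincide'' is where the argument breaks.

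The gap is easily closed without changing the rest of the proof, because you are actually sitting on stronger information than the angle at $p$. By convexity of the distance function in a CAT(0) space, for fixed $j$ the map $t\mapsto \tfrac{1}{t}\,d(\can_p(tu),\can_p(tv_j))$ is non-decreasing, so for every $0<t\leq r_j$ you have
\[
\frac{1}{t}\,d(\can_p(tu),\can_p(tv_j)) \;\leq\; \frac{1}{r_j}\,d(\can_p(r_ju),\can_p(r_jv_j)),
\]
and the right-hand side tends to $0$ by your triangle inequality estimate. Fixing $t>0$ and letting $j\to\infty$ (so eventually $r_j\geq t$) gives $d(\can_p(tu),\can_p(tv_j))\to 0$. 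Combined with $d(\can_p(tv_j),\can_p(tv))\leq t\, d_T(v_j,v)\to 0$ (since $\can_p$ is $1$-Lipschitz from $C_TX$), this yields $\can_p(tu)=\can_p(tv)$ for every $t>0$, hence $u=v\in\spt(\bar Z)$. In other words, do not pass to the $t\to 0^+$ limit (which only recovers the local angle); use the monotone quantity at all scales up to $r_j$.
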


For the reverse inclusion, we need the following simple lemma which is stated in the proof of \cite[Theorem 9.4]{kl_hrh} and whose proof is also outlined there.

\begin{lemma}\label{bplemma}\emph{(cf. proof of \cite[Theorem 9.4]{kl_hrh}).}
Let \(X\) be a proper Hadamard space, \(k\geq 1\) an integer, \(\bar{Z}\in \Zak{k-1}(\partial_T X)\) a cycle, \(p,q\in X\) and \(R_p,R_q\in \Zloc{k}(X)\) the cones over \(\bar{Z}\) from \(p\) and \(q\), respectively. Then \(F_\infty(R_q - R_p) = 0\).
\end{lemma}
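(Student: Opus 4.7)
The plan is to construct, for each sufficiently large $r > 0$, an explicit filling $V_r \in \Iak{k+1}(X)$ satisfying $\spt(R_q - R_p - \partial V_r) \cap B(p,r) = \emptyset$ and $\Mass(V_r) = O(r^k)$; dividing by $r^{k+1}$ then yields $F_{p,r}(R_q - R_p) = O(r^{-1}) \to 0$. The geometric content is that $R_p$ and $R_q$ share the same asymptotic shape, and a geodesic interpolation between them has mass controlled by $d(p,q)$ times a ``lateral'' factor.

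First I would invoke Theorem~\ref{lift} (and the preceding discussion) to obtain a common lift to $C_TX$: if $\bar{R}\in \Iak{k}(C_TX)$ denotes the cone from $o$ over $\bar{Z}$ and one writes $\pi_s^x(u) := \can_x(su)$ for $x \in X$, then $(\pi_s^p)_\# \bar{R} = R_p\res B(p,s)$ and $(\pi_s^q)_\# \bar{R} = R_q\res B(q,s)$. Fixing $s := r + d(p,q)$, I would define $h_s \colon [0,1]\times C_TX \to X$ by $h_s(t,u) := \sigma(t, \pi_s^p(u), \pi_s^q(u))$ and set $V_r := h_{s\#}(\bb{0,1}\times \bar{R})$. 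The standard product boundary formula then yields
\[
\partial V_r = (\pi_s^q)_\# \bar{R} - (\pi_s^p)_\# \bar{R} - h_{s\#}(\bb{0,1}\times \bar{Z}).
\]
The key geometric input is that for $u \in \partial_T X$ the rays $\xi_p^u, \xi_q^u$ are asymptotic, so $t \mapsto d(\xi_p^u(t), \xi_q^u(t))$ is convex and bounded on $[0,\infty)$, hence nonincreasing; in particular $d(\pi_s^p(u), \pi_s^q(u)) \leq d(p,q)$ for every $s \geq 0$. By the triangle inequality, every point of the geodesic $t \mapsto h_s(t,u)$ with $u \in \spt(\bar{Z}) \subseteq \partial_T X$ lies at distance at least $s - d(p,q) = r$ from $p$, so the third term on the right is supported outside $B(p,r)$; and the choice $s \geq r + d(p,q)$ ensures $B(p,r) \subseteq B(p,s) \cap B(q,s)$, so the first two terms agree with $R_q$ and $R_p$ respectively on $B(p,r)$. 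Hence $\partial V_r$ coincides with $R_q - R_p$ on $B(p,r)$, as required.

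The only step requiring real care is the mass bound. The uniform estimate $d(\pi_s^p(u), \pi_s^q(u)) \leq d(p,q)$ together with the fact that each $\pi_s^x$ is $s$-Lipschitz shows that $h_s$ is $d(p,q)$-Lipschitz in the $t$-factor and $s$-Lipschitz in the $u$-factor. The crude bound $\Mass(V_r) \leq \Lip(h_s)^{k+1}\Mass(\bar{R}) = O(s^{k+1})$ is too weak to close the argument; I would instead invoke the product-type estimate $\Mass(V_r) \leq d(p,q)\cdot s^k \cdot \Mass(\bar{R})$, which follows from the standard fine mass inequality for homotopy chains that separates the Lipschitz constants in the two factors (this is the same mechanism underlying Wenger's optimal cone inequality, Theorem~\ref{cone_ineq}). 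With this in hand, $F_{p,r}(R_q - R_p) \leq \Mass(V_r)/r^{k+1} = O(1/r) \to 0$, which finishes the proof.
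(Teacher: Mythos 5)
Your proof is correct and is essentially the argument outlined in the proof of Theorem~9.4 of \cite{kl_hrh}, which the paper cites rather than reproduces: lift $\bar{Z}$ to the common cone $\bar{R}\in\Iak{k}(C_TX)$, push forward via the two maps $\pi_s^p,\pi_s^q$ to realize $R_p\res B(p,s)$ and $R_q\res B(q,s)$, and fill the difference by the geodesic homotopy $h_s(t,u)=\sigma(t,\pi_s^p(u),\pi_s^q(u))$, whose mass is $O(s^k)$ because asymptotic rays have nonincreasing, hence uniformly bounded, distance. Two small remarks on details you glossed over: first, since $F_{p,r}$ requires $\spt(R_q-R_p-\partial V_r)$ to miss the \emph{closed} ball $B(p,r)$, you should take $s>r+d(p,q)$ rather than $s=r+d(p,q)$ (the geodesic track and the restricted pieces $R_p\res(X\ssm B(p,s))$, $R_q\res(X\ssm B(q,s))$ all lie at distance $\geq s-d(p,q)$, resp.\ $\geq s$, resp.\ $\geq s-d(p,q)$, from $p$, so with strict inequality they clear $B(p,r)$); second, the mass estimate you invoke for the pushforward of $\bb{0,1}\times\bar{R}$ under a map with distinct Lipschitz constants in the two factors is indeed available, but the sharp constant is not $d(p,q)\cdot s^k$ — the standard homotopy estimate gives something like $(k+1)\cdot d(p,q)\cdot s^k\cdot\Mass(\bar{R})$ or $d(p,q)\cdot(d(p,q)+s)^k\cdot\Mass(\bar{R})$ depending on the version one uses (cf.\ the product construction in \cite[Subsection~2.3]{wenger_euclidean}). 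Either form is $O(s^k)=O(r^k)$, so the conclusion $F_{p,r}(R_q-R_p)=O(1/r)\to0$ stands unaffected.
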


We now establish the reverse inclusion as a part of the following theorem which also shows that the solutions to the asymptotic Plateau problem for strongly immovable cycles have a unique tangent cone at infinity (cf. \cite[Proposition 6.10]{hks_morse2}). The proof relies on arguments from the proof of \cite[Theorem 7.3]{kl_hrh} together with the previous proposition and lemma.

\begin{theorem}\label{uniquetangent}
\emph{(cf. \cite[Theorem 7.3]{kl_hrh}, \cite[Proposition 6.10]{hks_morse2})}
Let \(X\) be a proper Hadamard space, \(k\geq 1\) an integer and \(\bar{Z}\in \Zak{k-1}(\partial_TX)\) strongly immovable. For each \(p \in X\), let \(R_p \in \Zloc{k}(X)\) denote the cone over \(\bar{Z}\) from \(p\). Assume that \(S\in \Zloc{k}(X)\) is area minimizing with \(F_\infty(S-R_p) = 0\) for some \(p\in X\). Then for every \(q\in X\) it holds that \((\varrho_{q,\lambda})_\# S \rightarrow R_q\) in the weak topology as \(\lambda \searrow 0\).
\end{theorem}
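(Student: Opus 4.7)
The plan is to leverage Proposition \ref{partial_filling}(ii) --- which provides uniform $F$-filling control of $S - R_q$ at large scales --- together with a natural scaling law for the functional $F_{q,s}$ under the dilations $\varrho_{q,\lambda}$. First I would reduce to $p = q$: by Lemma \ref{bplemma}, $F_\infty(R_p - R_q) = 0$, and the obvious subadditivity $F_{p,r}(T_1 - T_2) \leq F_{p,r}(T_1 - T_3) + F_{p,r}(T_3 - T_2)$ (obtained by adding two approximate fillings), together with the fact that $F_\infty$ does not depend on the base point, yields $F_\infty(S - R_q) = 0$. From here on I assume $p = q$.

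Second, I would establish the scaling estimate
\[
F_{q,s}\bigl((\varrho_{q,\lambda})_\# T\bigr) \leq F_{q,s/\lambda}(T)
\]
for every local cycle $T$ and every $0 < \lambda \leq 1$. Indeed, given any filling $V \in \Iak{k+1}(X)$ with $\spt(T - \partial V) \cap B(q, s/\lambda) = \emptyset$, the push-forward $(\varrho_{q,\lambda})_\# V$ has mass at most $\lambda^{k+1}\Mass(V)$, and since $\varrho_{q,\lambda}$ scales distances from $q$ exactly by $\lambda$, the image of $\spt(T - \partial V)$ (which is closed in $X$, as $\varrho_{q,\lambda}$ is continuous and proper) lies in $X \setminus B(q, s)$; absorbing the $\lambda$-factors into the $s^{k+1}$-normalisation yields the claim. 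Combining this with the conicality identity $(\varrho_{q,\lambda})_\# R_q = R_q$ produces
\[
F_{q,s}\bigl((\varrho_{q,\lambda})_\# S - R_q\bigr) \leq F_{q,s/\lambda}(S - R_q),
\]
and by Proposition \ref{partial_filling}(ii) the right-hand side tends to $0$ as $\lambda \searrow 0$, for every fixed $s > 0$.

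Finally, I would read off weak convergence from this $F$-decay. Given a test tuple $(f, \pi_1, \ldots, \pi_k)$ with $\spt f \subseteq B(q, s_0)$, fix any $s > s_0$ and pick near-optimal fillings $V_\lambda \in \Iak{k+1}(X)$ with $\Mass(V_\lambda) \to 0$ and $\spt\bigl((\varrho_{q,\lambda})_\# S - R_q - \partial V_\lambda\bigr) \cap B(q, s) = \emptyset$. Since $f$ vanishes on this support,
\[
\bigl((\varrho_{q,\lambda})_\# S - R_q\bigr)(f, \pi_1, \ldots, \pi_k) = (\partial V_\lambda)(f, \pi_1, \ldots, \pi_k),
\]
and the defining mass bound for currents, applied via $(\partial V_\lambda)(f,\pi_1,\ldots,\pi_k) = V_\lambda(\mathbf{1}_{\spt V_\lambda \cap \spt f},f,\pi_1,\ldots,\pi_k)$, gives $|(\partial V_\lambda)(f,\pi_1,\ldots,\pi_k)| \leq \Lip(f)\prod_{i=1}^k \Lip(\pi_i|_{\spt f}) \cdot \Mass(V_\lambda) \to 0$. (These same $V_\lambda$ in fact witness the stronger local flat convergence of Theorem \ref{main_thm}(ii) on every bounded set.) I do not anticipate a serious obstacle: the deep content --- namely the exploitation of strong immovability through a relative isoperimetric inequality --- has been packaged into Propositions \ref{reliso} and \ref{partial_filling}, so what remains is essentially the bookkeeping of scaling factors described above.
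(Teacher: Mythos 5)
Your proposal is correct, and it takes a genuinely different route from the paper's proof. The paper follows the structure of the proof of Theorem 7.3 in \cite{kl_hrh}: it invokes Lang's compactness theorem to extract a conical subsequential weak limit \(R'\) of \((\varrho_{q,\lambda_j})_\# S\), then runs a blow-down argument into an asymptotic cone (via Wenger's compactness theorem and Proposition~\ref{embedding_lemma}) and uses strong immovability there to force \(R'=R_q\). Your argument instead exploits the exact scaling behaviour of the filling functional: using that \(\varrho_{q,\lambda}\) is \(\lambda\)-Lipschitz, proper, and scales distances to \(q\) exactly by \(\lambda\), pushing a near-optimal filling forward gives \(F_{q,s}\bigl((\varrho_{q,\lambda})_\#(S-R_q)\bigr)\le F_{q,s/\lambda}(S-R_q)\); since \((\varrho_{q,\lambda})_\#R_q=R_q\) and \(F_\infty(S-R_q)=0\) (by Lemma~\ref{bplemma} and subadditivity), the right-hand side vanishes as \(\lambda\searrow 0\), and the weak (indeed local flat) convergence is read off as you describe. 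Each of these steps is sound: the subadditivity of \(F_{p,r}\) follows by summing fillings, the base-point independence of \(F_\infty\) is clear, and the final mass-bound on \((\partial V_\lambda)(f,\pi_1,\ldots,\pi_k)\) is valid because the integral in the mass estimate for \(V_\lambda\) is localised to the compact set \(\spt(f)\). What your route buys is a shorter, more elementary argument that bypasses all compactness machinery and, strikingly, never uses the minimality of \(S\) nor strong immovability of \(\bar{Z}\) beyond what is already encoded in the hypothesis \(F_\infty(S-R_p)=0\); it also directly yields the local flat convergence claimed in Theorem~\ref{main_thm}(ii). What the paper's route buys, by contrast, is robustness in the more general setting of \cite{kl_hrh}, where \(R_p\) need not a priori arise as a cone over a Tits-boundary cycle and a relative of Lemma~\ref{bplemma} is not available up front. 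One small remark: your invocation of Proposition~\ref{partial_filling}(ii) is stronger than necessary — for a fixed \(S\), \(F_\infty(S-R_q)=0\) already gives \(F_{q,r}(S-R_q)\to 0\) directly from the \(\limsup\) definition, so the uniformity over \(S\) provided by that proposition (which is where minimality and strong immovability actually enter) is not needed here.
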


\begin{proof}
Let \(\Theta \coloneqq \Theta_\infty(S) = \Theta_\infty(R)\). First, as in the proof of \cite[Theorem 7.3]{kl_hrh}, we note that for every \(q\in X\) and every \(0 < \lambda \leq 1\), we have that
\begin{equation}\label{massbound}
\begin{split}
\|(\varrho_{q,\lambda})_\# S\|(B(p,r)) &= \Mass((\varrho_{q,\lambda})_\# (S\res B(p,\frac{1}{\lambda}r))) \leq \lambda^k \cdot \Mass(S \res B(p,\frac{1}{\lambda}r))\\
&\leq \lambda^k \cdot \Theta \cdot (\frac{1}{\lambda}r)^k = \Theta\cdot r^k
\end{split}
\end{equation}
since \(\varrho_{q,\lambda}\) is \(\lambda\)-Lipschitz. Now, let \(S_j\coloneqq (\varrho_{q,\lambda_j})_\# S\) where \(0 < \lambda_j \leq 1\) is a sequence of numbers such that \(\lambda_j \searrow 0\). We will show that \(S_j \rightarrow R_q\) weakly as \(j\rightarrow \infty\). It is shown in the proof of \cite[Theorem 7.3]{kl_hrh}, that we may assume that there exists an \(R'\in \Zloc{k}(X)\) which is conical with respect to \(q\) and such that after possibly passing to a subsequence, \(S_j\rightarrow R'\) in the weak topology as \(j\rightarrow \infty\). The proof also shows that \(F_\infty(S-R') < \infty\) and \(\partial_T\spt(R')\subseteq \partial_T\spt(S)\) so \(\partial_T \spt(R')\subseteq \spt(\bar{Z})\) as \(\partial_T \spt(S) \subseteq \spt(\bar{Z})\) and hence is \(\partial_T\spt(R')\) compact in \(\partial_TX\). Furthermore, from \eqref{massbound}, we know that \(\|R'\|(B(p,r)) \leq \Theta \cdot r^k\) for every \(r\geq 0\). Further, \(F_\infty(R'-R_q) < \infty\) since by Lemma \ref{bplemma}, \(F_\infty(S-R_q) = 0\) so \(F_\infty(R'-R_q) \leq F_\infty(S - R_q) + F_\infty(S-R') < \infty\) as \(F_\infty(S-R') < \infty\). Using that \(F_\infty(R'-R) < \infty\) together with the slicing theorem and the fact that \(R_q\) and \(R'\) have bounded density at infinity, it is easy to show the existence of a constant \(C > 0\) and sequences \(0 < r_j \nearrow \infty\) and \(V_j\in \Iak{k+1}(X)\), \(j\in \N\), such that for every \(j \in \N\) it holds that \(\Mass(V_j) \leq C\cdot r_j^{k+1}\), \(\Mass(\partial V_j) \leq C\cdot r_j^k\) and \(\spt(\partial V_j - (R'-R_q))\subseteq S(p,r_j)\). Let \(\iota_j: (X,d)\rightarrow (X,\frac{1}{r_j}d)\) denote the identity map. By Wenger's compactness theorem \cite[Theorem 1.2]{wenger_compact}, there exists a complete metric space \(Z\) together with isometric embeddings \(\varphi_j:(X,\frac{1}{r_j}d)\rightarrow Z\) and \(\bar{V}\in \Iak{k+1}(Z)\), \(\bar{R}',\bar{R}\in \Iak{k}(Z)\), such that after possibly passing to a subsequence, \((\varphi_j\circ \iota_j)_\# V_j \rightarrow \bar{V}\), \((\varphi_j\circ \iota_j)_\# (R'\res B(p,r_j)) \rightarrow \bar{R}'\) and \((\varphi_j\circ \iota_j)_\# (R_q \res B(p,r_j))\rightarrow \bar{R}\) in the flat topology as \(j\rightarrow \infty\). By Proposition \ref{embedding_lemma}, \(\spt(\bar{V})\) embeds isometrically into the asymptotic cone \(X_\omega \coloneqq \lim_\omega (X,\frac{1}{r_j}d,p)\) where \(\omega\) is some non-principal ultrafilter on \(\N\), so we may consider \(\bar{V}\) as an element of \(\Iak{k+1}(X_\omega)\) and \(\bar{R},\bar{R}'\) as elements of \(\Iak{k}(X_\omega)\). As \(R,R'\) admit lifts to the Tits cone by Theorem \ref{lift}, we can show in exactly the same way as in the proof of Proposition \ref{reliso} that \(\bar{R},\bar{R}'\) are the lifts of \(R,R'\) to the Tits cone, respectively, using the embedding from Lemma \ref{embedding_lemma} to view \(C_TX\) as a subspace of \(X_\omega\). By the condition \(\spt(\partial V_j - R'+R_q) \subseteq S(p,r_j)\) we know that \(\bar{T}\coloneqq \partial \bar{V}-\bar{R}'+\bar{R}\) is supported on \(S(p_\omega,1)\). As \(\Theta_\infty(R')\leq \Theta_\infty(R) = \Theta\), it follows from Theorem \ref{lift} that \(\|\bar{R}'\|(B(p_\omega,r)) \leq \Theta\cdot r^k = \|\bar{R}\|(B(p_\omega,r))\) for every \(0 \leq r \leq 1\). Now, \(\bar{T} + \bar{R}'\) is a filling of \(\partial \bar{R}\) which hence contains \(\bar{R}\) as a slice, as \(\partial \bar{R}\) is strongly immovable. Using that \(\bar{T}\) is supported on \(S(p_\omega,1)\), and that \(\|\bar{R}'\|(B(p_\omega,r)) \leq \|\bar{R}\|(B(p_\omega,r))\) for every \(0\leq r \leq 1\), we can show in exactly the same way as in the proof of Proposition \ref{reliso} above, that \(\bar{R}\res B(p_\omega,r) = \bar{R}'\res B(p_\omega,r)\) for every \(0 \leq r < 1\) and then conclude that \(\bar{R}' = \bar{R}\) so \(R' = R_q\). This finishes the proof.
\end{proof}

\begin{corollary}\label{eq_tits}
\emph{(cf. \cite[Proposition 8.2]{kl_hrh})}
Let \(X\) be a proper Hadamard space, \(k\geq 1\) an integer and \(\bar{Z} \in \Zak{k-1}(\partial_TX)\) strongly immovable. Then for every area minimizing \(S\in \Zloc{k}(X)\) with \(F_\infty(S - R) = 0\) where \(R\in \Zloc{k}(X)\) is a cone over \(\bar{Z}\), it holds that \(\partial_T \spt(S) = \spt(\bar{Z})\).
\end{corollary}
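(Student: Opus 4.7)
By Proposition \ref{minsub}, $\partial_T\spt(S) \subseteq \spt(\bar Z)$, so only the reverse inclusion requires proof. Fix $u \in \spt(\bar Z)$. The plan is to pass to an asymptotic cone of $X$, show that the rescaled limit of $S$ there coincides with the image of the lift of $R_q$ to $C_TX$, and then read off $u$ as the direction at infinity of a sequence in $\spt(S)$ via Proposition \ref{embedding_lemma}.

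Fix $0 < r_j \nearrow \infty$ and a non-principal ultrafilter $\omega$ on $\N$; let $X_\omega := \lim_\omega (X, r_j^{-1} d, q)$ and let $\iota : C_T X \to X_\omega$ be the embedding from Lemma \ref{asympttits}. Since $F_\infty(S - R_q) = 0$, we may pick fillings $V_j' \in \Iak{k+1}(X)$ with $\Mass(V_j') = o(r_j^{k+1})$ and $\spt(S - R_q - \partial V_j')\cap B(q, 2r_j) = \emptyset$; the coarea inequality combined with the slicing theorem then yields radii $s_j \in [r_j, 2r_j]$ with $\Mass(\langle V_j', d_q, s_j\rangle) = o(r_j^k)$ and both $\Mass(\langle S, d_q, s_j\rangle)$ and $\Mass(\langle R_q, d_q, s_j\rangle)$ of order $O(r_j^{k-1})$. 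Setting $V_j := V_j' \res B(q, s_j)$, we obtain $\partial V_j = (S - R_q)\res B(q, s_j) + \langle V_j', d_q, s_j\rangle$, $\Mass(V_j) = o(r_j^{k+1})$, and $\Mass(\partial V_j) = O(r_j^k)$. Let $\iota_j : (X, d) \to (X, r_j^{-1} d)$ denote the identity. The rescaled currents $(\iota_j)_\# V_j$, $(\iota_j)_\# (S \res B(q, s_j))$ and $(\iota_j)_\# (R_q \res B(q, s_j))$ have uniformly bounded masses and boundary masses, so Gromov's compactness theorem combined with the Ambrosio--Kirchheim compactness theorem (as in the proof of Proposition \ref{reliso}) produces, after extraction, a common complete metric space $Z$ and flat limits $\bar V, \bar S, \bar R$. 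Proposition \ref{embedding_lemma} lets us regard $\bar V, \bar S, \bar R$ as currents in $X_\omega$, and the argument of Claim \ref{lift_asympt} applied to the conical current $R_q$ (whose lift $\bar R_q \in \Iak{k}(C_TX)$ is guaranteed by Theorem \ref{lift} via the Tits-compactness of $\spt(\bar Z)$) identifies $\bar R$ with $\iota_\# \bar R_q$.

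The mass bound $\Mass((\iota_j)_\# V_j) = (1/r_j)^{k+1} \cdot o(r_j^{k+1}) \to 0$ forces $\bar V = 0$, and the rescaled slicing correction $(\iota_j)_\# \langle V_j', d_q, s_j\rangle$ likewise has vanishing mass in the limit. Passing to the limit in the relation $\partial V_j = (S - R_q)\res B(q, s_j) + \langle V_j', d_q, s_j\rangle$ therefore yields $0 = (\bar S - \bar R) + 0$, so $\bar S = \bar R = \iota_\# \bar R_q$ in $X_\omega$. Since $u \in \spt(\bar Z) \subseteq \spt(\bar R_q)$ (as $\spt(\bar R_q)$ is the cone over $\spt(\bar Z)$ from $o \in C_TX$), we conclude $\iota(u) \in \spt(\bar S)$.

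Finally, Proposition \ref{embedding_lemma} applied to the rescaled $(\iota_j)_\#(S \res B(q, s_j))$ identifies $\spt(\bar S)$ isometrically with a subspace of $\lim_\omega (\spt(S) \cap B(q, s_j), r_j^{-1} d, q) \subseteq X_\omega$, so $\iota(u)$ is represented by some sequence $[(x_j)_{j \in \N}]$ with $x_j \in \spt(S) \cap B(q, s_j)$. Lemma \ref{asympttits} provides the alternative representative $\iota(u) = [(\can_q(r_j u))_{j \in \N}]$; equality in $X_\omega$ translates to $\lim_\omega r_j^{-1} d(x_j, \can_q(r_j u)) = 0$, and extracting a suitable subsequence yields $x_{j_n} \in \spt(S)$ with $r_{j_n} \nearrow \infty$ and $r_{j_n}^{-1} d(x_{j_n}, \can_q(r_{j_n} u)) \to 0$, witnessing $u \in \partial_T\spt(S)$. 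The main technical step is the choice of the slice radii $s_j$ so that boundaries behave asymptotically correctly; strong immovability of $\bar Z$ enters this proof only indirectly, through the existence of $S$ (Theorem \ref{fsol}) and the Tits-compactness of $\spt(\bar Z)$ (Corollary \ref{immcpt}, used to apply Theorem \ref{lift}).
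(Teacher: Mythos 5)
Your approach is genuinely different from the paper's: instead of invoking Theorem \ref{uniquetangent} (uniqueness of the tangent cone, whose proof uses strong immovability in an essential way), you pass directly to an asymptotic cone and use $F_\infty(S - R_q) = 0$ to force the rescaled limits $\bar{S}$ and $\bar{R}$ to coincide. The idea is sound, but two steps are glossed over. First, ``passing to the limit in the relation $\partial V_j = (S - R_q)\res B(q,s_j) + \langle V_j', d_q, s_j\rangle$'' requires pushing $V_j$ and the slice forward to the common compact space $Z$, which in turn requires $\spt(V_j')\cap B(q,s_j)$ to sit inside the approximating spaces fed into Gromov's compactness theorem; their uniform total boundedness is exactly what Claim \ref{precptclaim} establishes in the paper, using minimality of the fillings. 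You did not take the $V_j'$ to be minimal, so there is no uniform density bound on $\spt(V_j')$ and this step does not run as stated. You should either choose the $V_j'$ to be absolutely minimizing fillings, or avoid pushing $V_j$ to $Z$ altogether and instead adapt the $Y$-space argument from the proof of Proposition \ref{reliso} in the converse direction, deducing $\bar{S} = \bar{R}$ from the flat smallness of $(S - R_q)\res B(q,s_j)$ in $(X, r_j^{-1}d)$.

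Second, the last paragraph conflates two a priori different applications of Proposition \ref{embedding_lemma}: one embeds $\spt(\bar{R})$ into $\lim_\omega(\spt(R_q)\cap B(q,s_j), r_j^{-1}d, q)$, and under this embedding the argument of Claim \ref{lift_asympt} identifies $\bar R$ with $\iota_\#\bar R_q$; the other embeds $\spt(\bar S)$ into $\lim_\omega(\spt(S)\cap B(q,s_j), r_j^{-1}d, q)$. Even knowing $\bar{S} = \bar{R}$ in $Z$, it does not immediately follow that $\iota(u)$ lies in the image of the second embedding, since Proposition \ref{embedding_lemma} only asserts existence of an isometric embedding; you must check that the two embeddings agree on $\spt(\bar S) = \spt(\bar R)$. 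They do — both send $x$ to $[(x_j)_{j\in\N}]$ where $x_j$ lies in the respective support and $\lim_\omega\varphi_j(\iota_j(x_j)) = x$ in $Z$, and two such sequences for the same $x$ are $d_\omega$-asymptotic because the $\varphi_j$ are isometric — but this needs to be said. With these repairs the argument goes through and shows, interestingly, that the reverse inclusion $\spt(\bar Z)\subseteq\partial_T\spt(S)$ uses strong immovability only through the compactness of $\spt(\bar Z)$; note the forward inclusion via Proposition \ref{minsub} still uses it in full, so the corollary as stated still requires the hypothesis.
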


\begin{proof}
By Proposition \ref{minsub}, \(\partial_T\spt(S)\subseteq \spt(\bar{Z})\). As \(\partial_T \spt(S) \supseteq \partial_T \spt((\varrho_{p,\lambda})_\#S)\) for every \(p\in X\) and every \(0 < \lambda \leq 1\), \(\partial_T\spt(S)\supseteq \spt(\bar{Z})\) follows from Theorem \ref{uniquetangent} (cf. \cite[Theorem 7.3]{kl_hrh} and its proof).
\end{proof}

\renewcommand*{\bibfont}{\footnotesize}
\sloppy
\printbibliography

@article {ambrosio_kirchheim,
    AUTHOR = {Ambrosio, Luigi and Kirchheim, Bernd},
     TITLE = {Currents in metric spaces},
   JOURNAL = {Acta Math.},
  FJOURNAL = {Acta Mathematica},
    VOLUME = {185},
      YEAR = {2000},
    NUMBER = {1},
     PAGES = {1--80}
}

@article {anderson_invent,
    AUTHOR = {Anderson, Michael T.},
     TITLE = {Complete minimal varieties in hyperbolic space},
   JOURNAL = {Invent. Math.},
  FJOURNAL = {Inventiones Mathematicae},
    VOLUME = {69},
      YEAR = {1982},
    NUMBER = {3},
     PAGES = {477--494}
}

@article {bangert_lang,
    AUTHOR = {Bangert, Victor and Lang, Urs},
     TITLE = {Trapping quasiminimizing submanifolds in spaces of negative
              curvature},
   JOURNAL = {Comment. Math. Helv.},
  FJOURNAL = {Commentarii Mathematici Helvetici},
    VOLUME = {71},
      YEAR = {1996},
    NUMBER = {1},
     PAGES = {122--143}
}

@book {bh,
    AUTHOR = {Bridson, Martin R. and Haefliger, André},
     TITLE = {Metric spaces of non-positive curvature},
 PUBLISHER = {Springer-Verlag, Berlin},
      YEAR = {1999},
     PAGES = {xxii+643}
}

@article {federer_fleming,
    AUTHOR = {Federer, Herbert and Fleming, Wendell H.},
     TITLE = {Normal and integral currents},
   JOURNAL = {Ann. of Math. (2)},
  FJOURNAL = {Annals of Mathematics. Second Series},
    VOLUME = {72},
      YEAR = {1960},
     PAGES = {458--520}
}

@article {gromov_foliated,
    AUTHOR = {Gromov, M.},
     TITLE = {Foliated {P}lateau problem. {I}. {M}inimal varieties},
   JOURNAL = {Geom. Funct. Anal.},
  FJOURNAL = {Geometric and Functional Analysis},
    VOLUME = {1},
      YEAR = {1991},
    NUMBER = {1},
     PAGES = {14--79}
}

@article {hks_morse1,
    AUTHOR = {Huang, Jingyin and Kleiner, Bruce and Stadler, Stephan},
     TITLE = {Morse quasiflats {I}},
   JOURNAL = {J. Reine Angew. Math.},
  FJOURNAL = {Journal f\"{u}r die Reine und Angewandte Mathematik. [Crelle's
              Journal]},
    VOLUME = {784},
      YEAR = {2022},
     PAGES = {53--129}
}

@online {huang,
    AUTHOR = {Huang, Jingyin},
     TITLE = {Immovable cycles in the Tits boundary of {CAT}(0) cube complexes},
      YEAR = {Preprint from 2016},
       URL = {https://drive.google.com/open?id=1F2OLPjohr2EmBgQDmB15wvgdyuomIxmv},
}

@article {hks_morse2,
    AUTHOR = {Huang, Jingyin and Kleiner, Bruce and Stadler, Stephan},
     TITLE = {Morse quasiflats {II}},
   JOURNAL = {Adv. Math.},
  FJOURNAL = {Advances in Mathematics},
    VOLUME = {425},
      YEAR = {2023},
     PAGES = {Paper No. 109075, 41}
}

@article {kleiner_local, 
     AUTHOR = {Kleiner, Bruce},
     TITLE = {The local structure of length spaces with curvature bounded
              above},
   JOURNAL = {Math. Z.},
  FJOURNAL = {Mathematische Zeitschrift},
    VOLUME = {231},
      YEAR = {1999},
    NUMBER = {3},
     PAGES = {409--456},
}

@article {kl_hrh,
    AUTHOR = {Kleiner, Bruce and Lang, Urs},
     TITLE = {Higher rank hyperbolicity},
   JOURNAL = {Invent. Math.},
  FJOURNAL = {Inventiones Mathematicae},
    VOLUME = {221},
      YEAR = {2020},
    NUMBER = {2},
     PAGES = {597--664}
}

@article {kleiner_leeb,
    AUTHOR = {Kleiner, Bruce and Leeb, Bernhard},
     TITLE = {Rigidity of quasi-isometries for symmetric spaces and
              {E}uclidean buildings},
   JOURNAL = {Inst. Hautes \'{E}tudes Sci. Publ. Math.},
  FJOURNAL = {Institut des Hautes \'{E}tudes Scientifiques. Publications
              Math\'{e}matiques},
    NUMBER = {86},
      YEAR = {1997},
     PAGES = {115--197}
}

@article {kloeckner_mazzeo,
    AUTHOR = {Kloeckner, Beno\^{i}t R. and Mazzeo, Rafe},
     TITLE = {On the asymptotic behavior of minimal surfaces in {$\Bbb
              H^2\times \Bbb R$}},
   JOURNAL = {Indiana Univ. Math. J.},
  FJOURNAL = {Indiana University Mathematics Journal},
    VOLUME = {66},
      YEAR = {2017},
    NUMBER = {2},
     PAGES = {631--658}
}

@article {lang_local,
    AUTHOR = {Lang, Urs},
     TITLE = {Local currents in metric spaces},
   JOURNAL = {J. Geom. Anal.},
  FJOURNAL = {Journal of Geometric Analysis},
    VOLUME = {21},
      YEAR = {2011},
    NUMBER = {3},
     PAGES = {683--742}
}

@article {lang_cvpde,
    AUTHOR = {Lang, Urs},
     TITLE = {The asymptotic {P}lateau problem in {G}romov hyperbolic
              manifolds},
   JOURNAL = {Calc. Var. Partial Differential Equations},
  FJOURNAL = {Calculus of Variations and Partial Differential Equations},
    VOLUME = {16},
      YEAR = {2003},
    NUMBER = {1},
     PAGES = {31--46}
}

@article {wenger_compact,
    AUTHOR = {Wenger, Stefan},
     TITLE = {Compactness for manifolds and integral currents with bounded
              diameter and volume},
   JOURNAL = {Calc. Var. Partial Differential Equations},
  FJOURNAL = {Calculus of Variations and Partial Differential Equations},
    VOLUME = {40},
      YEAR = {2011},
    NUMBER = {3-4},
     PAGES = {423--448}
}

@article {wenger_euclidean,
    AUTHOR = {Wenger, Stefan},
     TITLE = {Isoperimetric inequalities of {E}uclidean type in metric
              spaces},
   JOURNAL = {Geom. Funct. Anal.},
  FJOURNAL = {Geometric and Functional Analysis},
    VOLUME = {15},
      YEAR = {2005},
    NUMBER = {2},
     PAGES = {534--554}
}

@article {wenger_flat,
    AUTHOR = {Wenger, Stefan},
     TITLE = {Flat convergence for integral currents in metric spaces},
   JOURNAL = {Calc. Var. Partial Differential Equations},
  FJOURNAL = {Calculus of Variations and Partial Differential Equations},
    VOLUME = {28},
      YEAR = {2007},
    NUMBER = {2},
     PAGES = {139--160}
}

@article {wenger_filling,
    AUTHOR = {Wenger, Stefan},
     TITLE = {Filling invariants at infinity and the {E}uclidean rank of
              {H}adamard spaces},
   JOURNAL = {Int. Math. Res. Not.},
  FJOURNAL = {International Mathematics Research Notices},
      YEAR = {2006},
     PAGES = {Art. ID 83090, 33}
}

@article {wenger_asymptotic,
    AUTHOR = {Wenger, Stefan},
     TITLE = {The asymptotic rank of metric spaces},
   JOURNAL = {Comment. Math. Helv.},
  FJOURNAL = {Commentarii Mathematici Helvetici. A Journal of the Swiss
              Mathematical Society},
    VOLUME = {86},
      YEAR = {2011},
    NUMBER = {2},
     PAGES = {247--275}
}

{\footnotesize
{\sc Department of Mathematics, ETH Zürich, Rämistrasse 101, 8092 Zürich, Switzerland}\\
\indent
{\it Email address}: \href{mailto:hjalti.isleifsson@math.ethz.ch}{hjalti.isleifsson@math.ethz.ch}}

\end{document}